\documentclass[11pt,a4paper]{article}
\usepackage{amsmath,amsfonts,amssymb,mathtools,amsthm,newlfont,graphicx,amscd,bbm,enumerate,galois,mathrsfs,xypic,geometry,bbm,xypic}
\usepackage{tabularx,extarrows}
\usepackage{hyperref}
\usepackage{comment,tikz-cd,tikz,enumitem}
\usepackage{cleveref}

\usepackage{color}

\newcommand{\al}{\alpha}

\newcommand{\pprime}{{\prime\prime}}

\newcommand{\mov}{\mathrm{mov}}
\newcommand{\act}{\mathrm{act}}
\newcommand{\DR}{\mathrm{DR}}
\newcommand{\Coef}{\mathrm{Coef}}
\newcommand{\CR}{\mathrm{CR}}
\newcommand{\pt}{\mathrm{pt}}
\newcommand{\Spec}{\mathrm{Spec}\,}

\newcommand{\st}{\mathrm{st}}

\newcommand{\wk}{\mathrm{wk}}
\newcommand{\tp}{\mathrm{top}}
\newcommand{\str}{\mathrm{str}}
\newcommand{\DZ}{\mathrm{DZ}}

\newcommand{\ori}{\mathrm{ori}}
\newcommand{\tr}{\mathrm{Tr}}

\DeclareMathOperator{\res}{Res}

\newtheorem{thm}{Theorem}[section]

\newtheorem{rmk}[thm]{Remark}

\newtheorem{cor}[thm]{Corollary}
\newtheorem{lem}[thm]{Lemma}

\newtheorem{defn}[thm]{Definition}

\numberwithin{equation}{section}

\begin{document}

\title{Finite Group Reduction of the DR/DZ Hierarchies}
\author{Si-Qi Liu, Youjin Zhang, Tianwen Zhong}
\date{\today}
\maketitle

\begin{abstract}
We show that the finite group reduction of the Dubrovin-Zhang/Double Ramification hierarchy associated with a semisimple CohFT preserves its bihamiltonian structure and its tau structure. By applying this result to the orbifold Gromov-Witten theory with the target $\mathbb{P}^1_{2,2,2,2}$, we obtain two non-equivalent integrable hierarchies which can be associated with the Frobenius manifold structure on the Hurwitz space $M_{1;1}$, and we show that one of them is equivalent to the genus one topological recursion.

\end{abstract}

{\small
\noindent\textbf{Keywords.} CohFTs, DR/DZ hierarchies, Bihamiltonian structures, Tau structures}

\tableofcontents

\section{Introduction}
Witten's conjecture \cite{Wit} led to a systematic study of the connection between integrable systems and enumerative geometry. The core idea is that enumerative invariants should be assembled into a generating series, and that the exponential of this generating series is expected to be a tau function of a certain integrable hierarchy. This conjecture was soon proved by Kontsevich in \cite{Kon}. Over the following decades, many individual cases have been studied. We list here some examples of enumerative theories whose associated integrable hierarchies can be explicitly described: the Gromov-Witten theory of $\mathbb{P}^1$ \cite{CDZ,DZ4,EK,EY,EYY,Get1} and its equivariant counterpart \cite{Get2,OP2}, the orbifold Gromov-Witten theory of $\mathbb{P}^1$-orbifolds \cite{CvdL,Joh,MST,MT}, and the FJRW theory of simple singularities \cite{FGM,FJR1,FJR2,FSZ,GM,HM,Wu,Wu2}. 

Witten's original conjecture admits the following generalizations. On the geometric side, Kontsevich and Manin introduced \textit{cohomological field theories} (CohFTs) to capture the axiomatic properties of enumerative geometry theories \cite{KM}. On the integrable system side, Dubrovin and the second-named author of the present paper constructed a bihamiltonian and tau-symmetric hierarchy (the DZ hierarchy) from a semisimple Frobenius manifold \cite{DZ3}. The tau structure of the DZ hierarchy gives rise to a tau function satisfying the Virasoro constraint. Suppose that this Frobenius manifold is given by the genus-zero topological part of a homogeneous CohFT with a flat identity, then this tau function coincides with the \textit{total descendent potential} of this CohFT due to the uniqueness of the solution to Dubrovin-Zhang's loop equation \cite{DZ3} and Givental-Teleman reconstruction theorem \cite{Giv,PPZ,Tel}.

Given a CohFT with a flat identity (not necessarily semisimple), one can construct a tau-symmetric Hamiltonian integrable hierarchy, called the double ramification hierarchy (the DR hierarchy), introduced in \cite{Bur} via the double ramification cycle on the moduli spaces of stable curves. If the CohFT is semisimple and homogeneous, the DR hierarchy admits a bihamiltonian structure and can be related to the DZ hierarchy as follows. The genus-zero topological part of the CohFT defines a semisimple Frobenius manifold, which in turn defines the DZ hierarchy. The DR hierarchy, after an explicitly constructed Miura-type transformation, coincides with the DZ hierarchy \cite{BLS}. Moreover, their bihamiltonian structures and tau structures agree \cite{BR2}.

In \cite{DZ3}, the authors summarized three axioms for integrable hierarchies: \textit{bihamiltonian structures}, \textit{tau structures}, and \textit{linearizable Virasoro symmetries}. The DZ hierarchy is the unique integrable hierarchy whose dispersionless limit coincides with the principal hierarchy and which satisfies all these axioms. Among these axioms, the bihamiltonian structure plays a distinguished role. One can associate with the bihamiltonian structure finitely many functions of one variable, called \textit{central invariants} \cite{LZ1}. A bihamiltonian integrable hierarchy possesses a tau structure if and only if all of these central invariants are constant functions \cite{DLZ3,FL}, and it satisfies the linearizable Virasoro symmetries axiom if and only if these constants are equal to $\frac{1}{24}$ \cite{LWZ2}.

If an integrable hierarchy satisfies all axioms except the linearizable Virasoro symmetries axiom, one can still define a tau function. Typical examples are the Drinfeld-Sokolov hierarchies of BCFG types. Their bihamiltonian structures have semisimple leading terms and their central invariants are constant functions but not all equal (see \cite{DLZ2}), which means that one cannot find CohFTs whose total descendent potentials give rise to tau functions of the BCFG Drinfeld-Sokolov hierarchies. However, Ruan, the first-named and the second-named author of the present paper are still able to prove a Witten conjecture type theorem in this case \cite{LRZ}. Their proof depends on the following two facts. The first one is that the total descendent potentials of the FJRW theories of ADE singularities are topological tau functions of the ADE Drinfeld-Sokolov hierarchies. Here the FJRW theories of ADE singularities define semisimple CohFTs, and the ADE Drinfeld--Sokolov hierarchies coincide with the corresponding DZ hierarchies. The second one is that both the enumerative geometry side and the integrable hierarchy side admit finite group symmetries. In analogy with the folding construction of the BCFG type Lie algebras from the ADE ones (see e.g. \cite{Kac}, \S 7.9), the BCFG hierarchies can be obtained from the ADE hierarchies by restricting to invariant flows. On the other hand, the FJRW theories admit enhanced symmetries by the same groups. The restrictions to fixed sectors yield new enumerative theories, which are \textit{partial} CohFTs rather than CohFTs. Finally, it can be verified that the restrictions on both sides match.

In this paper, we study the reduction properties of the DR/DZ hierarchies in a more general setting. Our starting point is a given semisimple CohFT with a finite group action, from which one can construct the corresponding DZ/DR hierarchies, which are referred to as the ambient DR/DZ hierarchies. The invariant flows of these hierarchies give rise to new integrable hierarchies, called the reduced hierarchies. Our main result is summarized in the following theorem, which combines Theorems~\ref{thm:main},~\ref{thm:reduceddrdz}, and~\ref{thm:taustructure}.
\begin{thm}
    The reduced DR/DZ hierarchies possess bihamiltonian structures and tau structures which are inherited from the ambient hierarchies. Moreover, the reduced DR hierarchy and the reduced DZ hierarchy are related by a Miura-type transformation preserving bihamiltonian structures and tau structures.
\end{thm}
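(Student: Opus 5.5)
The plan is to set up the finite group action at the level of the CohFT and push it through every construction. Let $G$ be a finite group acting on the state space $V$ by linear automorphisms that preserve the metric $\eta$ and the flat unit $e$, and that commute with all the CohFT classes $c_{g,n}$. Then $G$ acts on the jet space in the coordinates $u^\alpha$ by linear substitutions, and the invariant subspace $V^G$ gives a reduced jet space. The first step is to check that the ambient objects are $G$-equivariant: the DR Hamiltonian densities $g_{\alpha,d}$, the Hamiltonian operator $\eta^{-1}\partial_x$, and the Frobenius potential. For the DR side this should follow directly from the definition of $g_{\alpha,d}$ as integrals of $c_{g,n}$ against DR cycles and $\psi$-classes. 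For the DZ side, I would invoke the uniqueness of the DZ hierarchy, or equivalently of the solution to the loop equation: since $G$ acts by symmetries of the semisimple Frobenius manifold, it permutes canonical coordinates and preserves the Euler field. The unique bihamiltonian structure $(P_1,P_2)$ with central invariants $\tfrac1{24}$ is then mapped to itself, and so are the flows and the tau-symmetric densities $h_{\alpha,p}$.

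Next comes the reduction itself. Decompose $V=V^G\oplus W$, where $W$ is the $\eta$-orthogonal complement. This splitting is $G$-stable, and $\eta$ restricted to $V^G$ is nondegenerate because $G$ preserves $\eta$. The fixed-point locus $\{u\in V^G\}$ in the jet space is invariant under every flow $\partial/\partial t^{\alpha,p}$ with $e_\alpha\in V^G$, since such flows are $G$-equivariant vector fields. The reduced hierarchy consists of these flows restricted to the locus. To show the reduced hierarchy is bihamiltonian, I would prove that for a $G$-invariant local Poisson bivector $P$, the restriction $P^{\mathrm{red}}$, obtained by taking variational derivatives only in the invariant directions and setting the $W$-coordinates to zero, is again Poisson. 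This is the analogue of the fact that the fixed locus of a Poisson action by a finite (reductive) group is a Poisson submanifold: for invariant functionals $F$, the components of $\delta F/\delta u$ along $W$ vanish on the fixed locus. Hence the Schouten bracket $[P^{\mathrm{red}},P^{\mathrm{red}}]$ equals the restriction of $[P,P]=0$, and compatibility of $P_1,P_2$ passes down in the same way. Tau symmetry follows because $h_{\alpha,p}$ with invariant indices restrict to densities satisfying $\partial_{t^{\beta,q}}h_{\alpha,p}=\partial_{t^{\alpha,p}}h_{\beta,q}$. The Miura transformation of \cite{BLS} relating DR and DZ is $G$-equivariant by the uniqueness statement in \cite{BLS,BR2}, so it restricts to the fixed locus and intertwines the reduced structures.

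The main obstacle is the Poisson restriction in the infinite-dimensional, dispersive setting. One has to justify that $\delta F/\delta u^\alpha$ restricted to $u|_W=0$ has no $W$-component whenever $F$ is $G$-invariant. I would argue this by averaging over $G$: the $W$-part of the gradient is a $G$-equivariant map into a representation with no trivial summand, evaluated at an invariant point, so it vanishes. The same argument is needed for differential polynomials and not only for functionals, which is why the Schouten-bracket identity requires some care. A secondary subtlety is that the central invariants of the reduced pencil need not all equal $\tfrac1{24}$, so the reduced hierarchy need not satisfy the Virasoro axiom. However, they remain constant because they come from sums over $G$-orbits of canonical coordinates, and constancy is exactly what is needed for the tau structure by \cite{DLZ3,FL}.
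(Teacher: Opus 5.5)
Your central mechanism is the paper's. Averaging over $G$ kills, on the fixed locus, the moving components of the variational derivatives of an invariant local functional, so $\bar\iota^*$ intertwines Schouten brackets; this is Lemma~\ref{lem:trick} combined with Lemma~\ref{lem:main}. (The fixed locus is in general a Poisson--Dirac rather than a Poisson submanifold. That is exactly why only invariant Hamiltonians may be restricted, which is what you do.) The routes differ in three places.

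First, you assume cycle-level equivariance. The paper assumes only numerical invariance against tautological classes, plus invariance of the grading and of $r$. You should state the latter explicitly, since $\mathscr{P}_2^{\DR}$ and the recursion involve $|\phi_\alpha|$ and $r$. The weaker hypothesis is all the paper verifies for the $\mathbb{Z}_2\times\mathbb{Z}_2$-action on $\mathbb{P}^1_{2,2,2,2}$.

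Second, you get $G$-invariance of the DZ structures from uniqueness of the loop equation and reduce them directly. The classification by central invariants alone would give invariance only up to a Miura-type transformation; literal invariance comes from the canonical loop-equation construction. The paper instead reduces the DR bihamiltonian structure (the operator of \cite{BRS} together with \cite{BR2}) and transports everything to the DZ side, never using the loop equation.

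Third, and this is the thinnest step of your proposal, ``equivariant by the uniqueness statement in \cite{BLS,BR2}'' should be replaced by the concrete argument the paper gives:
\begin{itemize}
\item $\mathcal{F}^{\DR}$ and $\mathcal{F}$ are invariant;
\item hence so is the unique $\mathcal{P}$ of \eqref{eq:defofp};
\item hence \eqref{eq:dz2drnormal} composed with \eqref{eq:dr2normal} is $\Gamma$-linearized.
\end{itemize}
This one computation also shows that $G$ acts linearly on DZ normal coordinates, which makes your separate loop-equation step unnecessary.

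Finally, drop the claim that the reduced central invariants are orbit sums of the ambient ones. It is unproven: on the fixed locus the ambient canonical coordinates within one $G$-orbit coincide, so the ambient central-invariant theory does not restrict directly. It is also not needed. \cite{DLZ3,FL} would only give some tau structure, whereas your direct restriction argument already gives the inherited one, with free energy $\mathcal{F}^\Gamma$.
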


Our strategy to arrive at the main result is to prove that the reduced DR hierarchy possesses a bihamiltonian structure and a tau structure first. Then we apply the string equation to lift the finite group action to the DR jet space and the DZ jet space respectively. We verify that the DR/DZ equivalence for the ambient hierarchies is equivariant with respect to the finite group action, so that it provides a DR/DZ equivalence for the reduced hierarchies. Finally, we use the reduced DR/DZ equivalence to translate everything back to the DZ side.

As an application, we construct two different finite group actions on the semisimple CohFT arising from the orbifold Gromov–Witten theory of $\mathbb{P}^1_{2,2,2,2}$. The first one is the $\mathfrak{S}_4$-action given by permuting the Poincar\'e duality of the twisted sectors, and the second one is a $\mathbb{Z}_2\times\mathbb{Z}_2$-action (see subsection \ref{sssec:equalcentralinvariants}). There are also two reduced integrable hierarchies, and the restrictions of the CohFTs give rise to two (\textit{numerical}) partial CohFTs (see Definition~\ref{def:numericalpartialCohFT}) as well. Both of them have the same genus zero theory given by the Hurwitz space $M_{1;1}$ (see \cite{Dub}, Appendix~C and Lecture~5), and therefore the reduced hierarchies have the same dispersionless limit. However, the higher genus theories are not equivalent. One of them is equivalent to the topological recursion of the genus one spectral curve (see e.g. \cite{EO,GJZ}). We summarize the results in the following theorem.
\begin{thm}
    \begin{enumerate}[label=$\arabic*.$,ref=$\arabic*$]
        \item Under the $\mathfrak{S}_4$-action, the central invariants of the reduced integrable hierarchy are $\{\frac{1}{6}, \frac{1}{24}, \frac{1}{24}\}$. In particular, the corresponding partial CohFT satisfies a modified version of Virasoro constraints.
        \item Under the $\mathbb{Z}_2\times\mathbb{Z}_2$-action, the central invariants of the reduced integrable hierarchy are $\{\frac{1}{12}, \frac{1}{12}, \frac{1}{12}\}$. In particular, the corresponding numerical partial CohFT is equivalent to the genus one topological recursion after a rescaling.
    \end{enumerate}
\end{thm}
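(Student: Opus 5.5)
The theorem has two parts. The common strategy is to reduce both to a computation of central invariants of the reduced bihamiltonian structure. By the first main theorem, the reduced DR/DZ hierarchy carries a bihamiltonian structure inherited from the ambient one.

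\textbf{Setting up the computation.} The ambient theory is the orbifold Gromov--Witten theory of $\mathbb{P}^1_{2,2,2,2}$. Its Frobenius manifold has dimension $6$: the untwisted classes $1$ and $P$, plus four twisted sectors $\Delta_1,\dots,\Delta_4$. Its DZ hierarchy has all central invariants equal to $\frac{1}{24}$, one for each canonical coordinate $u_1,\dots,u_6$.

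The reduced bihamiltonian structure is obtained by restricting to the fixed locus of the group action. So I will do the following:
\begin{enumerate}[label=(\roman*)]
\item determine the fixed subspace, which is $3$-dimensional in both cases;
\item identify its canonical coordinates $v_1,v_2,v_3$ in terms of the ambient canonical coordinates $u_1,\dots,u_6$;
\item apply the standard formula $c_i=\frac{1}{3(f^i)^2}\bigl(Q_2^{ii}-\lambda_i Q_1^{ii}\bigr)$ to the restricted pencil.
\end{enumerate}

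The key observation is how the fixed locus meets the canonical coordinates. The group permutes the idempotents. On the fixed locus, several ambient canonical coordinates coincide, and a reduced canonical coordinate is their common value. The metric coefficient $f^i$ of the reduced pencil is the sum of the ambient ones over that orbit. The dispersive coefficients $Q^{ii}$ add in the same way, because the reduction is a restriction of Poisson brackets along an invariant submanifold.

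Consequently, if a reduced canonical coordinate comes from an orbit of size $k$ of idempotents, its central invariant is $\frac{1}{24}\cdot k$. Equivalently, it is $\frac{1}{24}$ multiplied by the ratio of the rescalings of the flat metric. I will verify this scaling rule carefully in the lemma-style computation. It is the main technical point, since the off-diagonal contributions must vanish on the fixed locus; I expect to check this using invariance of the pencil under the group.

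\textbf{Part 1: the $\mathfrak{S}_4$-action.} Here $\mathfrak{S}_4$ permutes the $\Delta_j$. The fixed space is spanned by $1$, $P$, and $\sum_j\Delta_j$. The six idempotents split into orbits that I expect to have sizes $4,1,1$. This gives central invariants $\{\frac{4}{24},\frac{1}{24},\frac{1}{24}\}=\{\frac{1}{6},\frac{1}{24},\frac{1}{24}\}$.

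I will confirm the orbit structure from the explicit quantum product at a fixed point, using the known mirror of $\mathbb{P}^1_{2,2,2,2}$ as an elliptic curve quotient. The modified Virasoro constraints then follow from the tau structure together with constant but unequal central invariants. This is analogous to the BCFG case of \cite{LRZ}: the Virasoro operators are deformed by the rescaling in the $\frac{1}{6}$-direction.

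\textbf{Part 2: the $\mathbb{Z}_2\times\mathbb{Z}_2$-action.} This group acts by combining sign changes on twisted sectors with permutations. I expect each of the three reduced idempotents to come from an orbit of size $2$, giving $\{\frac{1}{12},\frac{1}{12},\frac{1}{12}\}$.

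Since these are all equal, rescaling $\epsilon^2\mapsto 2\epsilon^2$ brings them to $\frac{1}{24}$. By \cite{DZ3,LWZ2}, the reduced hierarchy with its tau function is then the unique DZ-type hierarchy of the Frobenius manifold $M_{1;1}$. The topological recursion on the genus one spectral curve also produces a potential satisfying the linearizable Virasoro constraints for $M_{1;1}$. By uniqueness of the solution of the loop equation, the two potentials coincide.

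\textbf{Main obstacles.} I expect the hardest step to be the explicit identification of the orbit structure in each case. The second-hardest is justifying that the rescaled partial-CohFT potential equals the topological recursion potential, including matching normalizations of the metric.
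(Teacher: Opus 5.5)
Your route is genuinely different from the paper's. You count orbits of idempotents and then apply a scaling rule. The paper instead computes the $\varepsilon^2$-part of the reduced pencil directly from the Dubrovin--Zhang formulas and the explicit reduced canonical coordinates $\widehat{u}^k$.

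Your orbit sizes are correct, and you do not need the mirror to get them. At a semisimple fixed point the idempotents form a $\Gamma$-stable basis of $H$, so their permutation representation is $H$ itself. That is $3\,\mathrm{triv}\oplus\mathrm{std}$ for $\mathfrak{S}_4$ and $3\,\mathrm{triv}\oplus\chi_1\oplus 2\chi_2$ for $\mathbb{Z}_2\times\mathbb{Z}_2$, which forces $(4,1,1)$ and $(2,2,2)$.

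The gap is the scaling rule, which carries all the content, and as you state it, it gives the wrong answer. With $c=Q/(3f^2)$, summing both $f$ and $Q$ over an orbit of size $k$ yields $c/k$, i.e. $\frac{1}{96}$ and $\frac{1}{48}$. What actually happens is that on the fixed locus $\theta_i=\frac{1}{k}\widehat{\theta}_O$ modulo moving variables for $i\in O$. Hence $\widehat{\mathscr P}^{OO'}=\frac{1}{kk'}\sum_{i\in O,\,j\in O'}\mathscr P^{ij}$, where $k=|O|$ and $k'=|O'|$. So $\widehat f^O=f^i/k$; it is the covariant $\eta(\widehat e_O,\widehat e_O)=k\,\eta(e_i,e_i)$ that adds up. Since $P^{[1]}=0$, this gives
\[
\widehat c_O=\frac{1}{3(f^i)^2}\sum_{i,j\in O}\bigl(Q_2^{ij}-\widehat u^O Q_1^{ij}\bigr).
\]
The diagonal terms give $\sum_{i\in O}c_i=\frac{k}{24}$. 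Everything therefore hinges on the within-orbit off-diagonal terms vanishing on the fixed locus, where $u^i=u^j$. $\Gamma$-invariance does not give this; it only gives $Q^{ij}=Q^{\gamma(i)\gamma(j)}$. The ambient central-invariant theory also assumes distinct eigenvalues, which is exactly what fails there. You need a real argument at this point, for example via the $\log\widehat u^O_x$-terms of the restricted genus-one free energy, or else the paper's direct computation.

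Second, the last step of Part 2 is false as stated. After rescaling, central invariants $\frac{1}{24}$ give only two things. By Theorem~\ref{thm:classifybihamiltonian}, the reduced hierarchy is Miura-equivalent to the DZ hierarchy of $M_{1;1}$. By \cite{LWZ2}, its tau function satisfies the usual Virasoro constraints after a Miura-type transformation. The tau structure inherited from the ambient theory is not the DZ one, so loop-equation uniqueness applies to the transformed tau function, not to $\mathcal{F}^{\Gamma}$ itself.

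In fact the potentials do not coincide. By Corollary~\ref{cor:nontrivial}, the genus-one primary part of the reduced theory is $-\frac12\log\eta(\overline{Q}e^{\overline{t}^3})$. The loop-equation/TR one is the $G$-function $-\log\bigl[(\overline{t}^2)^{1/8}\eta(\overline{Q}e^{\overline{t}^3})\bigr]$, which has a logarithmic singularity in $\overline{t}^2$. So the equivalence with topological recursion holds only up to a nontrivial Miura-type transformation and an exponential of a quadratic polynomial, which is what the theorem asserts.
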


It should be noted that although the latter case is equivalent to topological recursion, the corresponding numerical partial CohFT satisfies modified Virasoro constraints rather than the usual ones (see Corollary~\ref{cor:nontrivial}). We plan to consider the Virasoro constraints for these two numerical partial CohFTs elsewhere.

This paper is organized as follows. In Section~\ref{sec:diffpoly}, we summarize the definitions of differential polynomials and Hamiltonian structures, and we develop the corresponding theory in the presence of a finite group action. In Section~\ref{sec:CohFT}, we recall the definition of cohomological field theories and the construction of the DR hierarchy. We also introduce the notion of \emph{numerical partial CohFTs}. In Section~\ref{sec:red}, we present a proof of the main result. In Section~\ref{sec:example}, we illustrate the usage of our main result, including the examples corresponding to $\mathbb{P}^1_{2,2,2,2}$.

\section{Preliminaries to differential polynomials with $\Gamma$-action}
\label{sec:diffpoly}
The following subsections are devoted to recall the theory of differential polynomials and Hamiltonian structures in terms of super variables. For a detailed introduction, the readers may refer to \cite{LZ3}.

We fix an $N$-dimensional $\mathbb{C}$-vector space $H$ and a finite group $\Gamma$. Given a space or a ring on which $\Gamma$ acts, we denote by "$\gamma.$" the action of $\gamma\in\Gamma$. When we fix a (left) $\Gamma$-action on $H$ and a basis $\{\phi_\alpha\}$ of $H$, we use the notation $\bigl(\act(\gamma)^\alpha_\beta\bigr)$ to denote the matrix representation of $\gamma.\colon H\rightarrow H$, i.e. 
\begin{equation}
    \gamma.\phi_{\alpha}=\act(\gamma)_\alpha^\beta\phi_\beta,\quad\gamma\in\Gamma.
    \label{eq:actionmatrix}
\end{equation}
Here and henceforth, the Einstein summation convention for Greek indices is assumed unless otherwise stated.

\subsection{Differential polynomials and bihamiltonian structures}
\label{subsec:diffpoly}
Denote by $X=\Spec\mathrm{Sym}^\bullet \bigl(\mathrm{Hom}_\mathbb{C}(H,\mathbb{C})\bigr)$ the canonical scheme structure on $H$, and by $\mathfrak{X}$ the formal completion of $X$ along the origin. Let $\hat{\mathfrak{X}}=\Pi (T^* \mathfrak{X})$ be the cotangent bundle of $\mathfrak{X}$ with fiber's parity reversed, and $J^{\infty}(\hat{\mathfrak{X}})$ be the infinite jet space of $\hat{\mathfrak{X}}$. The formal (super-)schemes $\mathfrak{X}$, $\hat{\mathfrak{X}}$, and $J^\infty(\hat{\mathfrak{X}})$ are supported at a single point. So all their information is encoded in their coordinate rings. We will use the notation $\hat{\mathscr{A}}$ for the coordinate ring of $J^{\infty}(\hat{\mathfrak{X}})$ instead of $\mathcal{O}_{J^{\infty}(\hat{\mathfrak{X}})}$. Elements in $\hat{\mathscr{A}}$ are called differential polynomials.

The constructions mentioned above can be described locally. In what follows, a (formal) coordinate system of $\mathfrak{X}$ is a local isomorphism of $\mathbb{C}$-algebras:
\begin{equation*}
    \varphi\colon\mathbb{C}[[v^1,\ldots,v^N]]\xrightarrow{\cong} \mathcal{O}_\mathfrak{X}
\end{equation*}
We will omit $\varphi$ whenever no confusion can arise. Similarly, we can consider coordinates of $\hat{\mathfrak{X}}$ and $J^\infty(\hat{\mathfrak{X}})$. A coordinate system $(v^\alpha)$ of $\mathfrak{X}$ induces a coordinate system $(v^\alpha;\rho_\alpha)$ of $\hat{\mathfrak{X}}$ and a coordinate system $(v^{\alpha,s};\rho_\alpha^s)$ of $J^\infty(\hat{\mathfrak{X}})$. Here $\rho_\alpha$ are coordinate functions on the fiber of $T^*\mathfrak{X}$, and the variables $\rho_\alpha^s$ are odd, i.e., 
\begin{equation*}
    \rho^s_{\alpha}\rho^t_{\beta}+\rho^t_{\beta}\rho^s_{\alpha}=0.
\end{equation*}
We also use the convention
\begin{equation*}
    v^{\alpha,0}=v^\alpha,\quad\rho_\alpha^0=\rho_\alpha.
\end{equation*}
Later, when we deal with several coordinate systems, we will use the notation $\hat{\mathscr{A}}_v$ to indicate that we have identified $\hat{\mathscr{A}}$ with $\mathbb{C}[[v^{\alpha,s},\rho_\alpha^s]]$.

A coordinate change from $(\widetilde{v}^\alpha;\widetilde{\varphi})$ to $(v^\alpha;\varphi)$ on $\mathfrak{X}$ is a local isomorphism
\begin{equation*}
    f\colon\mathbb{C}[[\widetilde{v}^\alpha]]\rightarrow\mathbb{C}[[v^\alpha]]    
\end{equation*}
such that
\begin{equation*}
    \varphi\circ f=\widetilde{\varphi}.
\end{equation*}
In what follows, we will simply write $\widetilde{v}^\alpha(v)$ for $f(\widetilde{v}^\alpha)$ and $v^\alpha(\widetilde{v})$ for $f^{-1}(v^\alpha)$. Under this coordinate change, the induced coordinate change from $(v^{\alpha,s};\rho_\alpha^s)$ to $(\widetilde{v}^{\alpha,s};\widetilde{\rho}_\alpha^s)$ on $J^\infty(\hat{\mathfrak{X}})$ can be recursively determined as follows
\begin{equation*}
    \widetilde{v}^{\alpha,s+1}=\sum_{i=0}^s\frac{\partial \widetilde{v}^{\alpha,s}}{\partial v^{\beta,i}}v^{\beta,i+1},\quad \widetilde{\rho}_\alpha^{s}=\sum_{i=0}^s\binom{s}{i}\frac{\partial v^{\beta,i}}{\partial \widetilde{v}^\alpha}\rho_\beta^{s-i},\quad s\geq 0.
\end{equation*}
There is a global vector field on $J^\infty(\hat{\mathfrak{X}})$ defined as follows
\begin{equation}
    \partial=\sum_{s\geq 0}\left(v^{\alpha,s+1}\frac{\partial}{\partial v^{\alpha,s}}+\rho^{s+1}_\alpha\frac{\partial}{\partial \rho_\alpha^s}\right).
    \label{eq:partial}
\end{equation}

Fix a coordinate system $(v^{\alpha,s};\rho_\alpha^s)$, we can define two gradations on $\hat{\mathscr{A}}$. The standard gradation is defined by assigning degree $s$ to $v^{\alpha,s}$ and $\rho_\alpha^s$. The corresponding graded piece of degree $d$ will be denoted by $\hat{\mathscr{A}}_d$. Sometimes we use a formal variable $\varepsilon$ to indicate the standard gradation, i.e. we assign $\varepsilon$ a degree $-1$ and identify $\hat{\mathscr{A}}_v$ with the subring of degree zero elements of
\begin{equation*}
    \mathbb{C}[[v^\alpha,\rho_\alpha]][v^{\alpha,s+1},\rho_\alpha^{s+1}\mid s\geq 0][[\varepsilon]].
\end{equation*}
The second gradation is called the super gradation. It is defined by assigning degree $1$ to $\rho_\alpha^s$ and $0$ to $v^{\alpha,s}$. The associated degree $p$ part is denoted by $\hat{\mathscr{A}}^p$. We also use the notation $\hat{\mathscr{A}}_d^p=\hat{\mathscr{A}}_d\cap\hat{\mathscr{A}}^p$. Note that these two gradations are independent of the choice of coordinates on $\mathfrak{X}$.

By local functionals, we mean elements of the quotient space $\hat{\mathscr{F}}=\hat{\mathscr{A}}/\partial\hat{\mathscr{A}}$. The class with representative $f\in\hat{\mathscr{A}}$ is denoted by $\int f$. Since $\partial$ is homogeneous in the sense that $\partial(\hat{\mathscr{A}}^p_d)\subset\hat{\mathscr{A}}^p_{d+1}$, there are also two induced gradations on $\hat{\mathscr{F}}$. The graded pieces will be denoted by $\hat{\mathscr{F}}_d$, $\hat{\mathscr{F}}^p$, and $\hat{\mathscr{F}}^p_d$.

For an element $f\in\hat{\mathscr{A}}$, we define its variational derivatives by
\begin{equation*}
    \frac{\delta f}{\delta v^\alpha}=\sum_{s\geq 0}(-\partial)^s\frac{\partial f}{\partial v^{\alpha,s}},\quad\frac{\delta f}{\delta\rho_\alpha}=\sum_{s\geq 0}(-\partial)^s\frac{\partial f}{\partial\rho_\alpha^s}.
\end{equation*}
They are homogeneous in the sense that 
\begin{equation*}
    \frac{\delta}{\delta v^\alpha}\left(\hat{\mathscr{A}}^p_d\right)\subset\hat{\mathscr{A}}^p_d,\quad\frac{\delta}{\delta\rho_\alpha}\left(\hat{\mathscr{A}}^p_d\right)\subset\hat{\mathscr{A}}^{p-1}_d.
\end{equation*}
One can check that variational derivatives annihilate $\partial\hat{\mathscr{A}}$, and therefore descend to $\hat{\mathscr{F}}$. We denote the induced variational derivatives on $\hat{\mathscr{F}}$ by the same notation:
\begin{equation*}
    \frac{\delta}{\delta v^\alpha},\,\frac{\delta}{\delta\rho_\alpha}\colon\hat{\mathscr{F}}\rightarrow\hat{\mathscr{A}}.
\end{equation*}

We define the Schouten-Nijenhuis bracket on $\hat{\mathscr{F}}$ by the following expression on homogeneous components:
\begin{equation*}
    [-,-]\colon\hat{\mathscr{F}}^p\times\hat{\mathscr{F}}^q\rightarrow\hat{\mathscr{F}}^{p+q-1},\quad (P,Q)\mapsto \int\frac{\delta P}{\delta \rho_\alpha}\frac{\delta Q}{\delta v^\alpha}+(-1)^p\frac{\delta P}{\delta v^\alpha}\frac{\delta Q}{\delta \rho_\alpha}.
\end{equation*}

\begin{defn}
    A Hamiltonian structure is a local functional $P\in\hat{\mathscr{F}}^2_{\geq 1}$ satisfying $[P,P]=0$. A bihamiltonian structure is a pair $(P,Q)$ of Hamiltonian structures satisfying $[P,Q]=0$.
\end{defn}

For any $P\in\hat{\mathscr{F}}^2_{\geq 1}$, we can define a matrix differential operator $\mathscr{P}=\left(\sum_{s\geq 0}\mathscr{P}^{\alpha\beta}_s\partial^s\right)$ by
\begin{equation*}
    \sum_{s\geq 0}\mathscr{P}_s^{\alpha\beta}\rho_\beta^s=\frac{\delta P}{\delta\rho_\alpha}.
\end{equation*}
Note that $P=\int\frac{1}{2}\rho_\alpha\frac{\delta P}{\delta\rho_\alpha}$. The operator $\mathscr{P}$ satisfies the following antisymmetry property
\begin{equation}
    \sum_{s\geq 0}\mathscr{P}^{\alpha\beta}_s\partial^s=\sum_{s\geq 0}(-1)^{s+1}\partial^s\circ\mathscr{P}^{\beta\alpha}_s,
    \label{eq:antisym}
\end{equation}
and the leading term property
\begin{equation}
    \mathscr{P}^{\alpha\beta}_0\in\hat{\mathscr{A}}^0_{\geq 1}.
    \label{eq:leadingterm}
\end{equation}

We can define an antisymmetric bilinear bracket as follows
\begin{equation}
    \{-,-\}_P\colon\hat{\mathscr{F}}^0\times\hat{\mathscr{F}}^0\rightarrow\hat{\mathscr{F}}^0,\quad (F,G)\mapsto\{F,G\}_P=\int\left(\frac{\delta F}{\delta v^\alpha}\right)\sum_{s\geq 0}\mathscr{P}_s^{\alpha\beta}\partial^s\left(\frac{\delta G}{\delta v^\beta}\right).
    \label{eq:poisson}
\end{equation}
The property \eqref{eq:antisym} implies that $\{F,G\}_P=-[F,[P,G]]$. The following theorem shows that the condition $[P,P]=0$ is equivalent to the Jacobi identity of $\{-,-\}_P$. 
\begin{thm}[\cite{LZ2}, Corollary 2.4.5]
    The Schouten-Nijenhuis bracket makes $\hat{\mathscr{F}}$ a Lie superalgebra, i.e. for any $P\in\hat{\mathscr{F}}^p,Q\in\hat{\mathscr{F}}^q,R\in\hat{\mathscr{F}}^r$, we have
    \begin{enumerate}[label=$\arabic*.$,ref=$\arabic*$]
        \item $[P,Q]=(-1)^{pq}[Q,P]$,
        \item $(-1)^{pr}[[P,Q],R]+(-1)^{qp}[[Q,R],P]+(-1)^{rq}[[R,P],Q]=0$.
    \end{enumerate}
\end{thm}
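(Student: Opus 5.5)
The plan is to reduce both identities to statements about the pairing $\int fg$ on $\hat{\mathscr{A}}$ and the variational calculus. The main tool is the standard description of $\hat{\mathscr{F}}$ through variational derivatives. Two facts will be used throughout.

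First, there is an Euler-type identity: for homogeneous $P\in\hat{\mathscr{F}}^p$ one has
\[
p\,P=\int\rho_\alpha\frac{\delta P}{\delta\rho_\alpha}.
\]
This follows by counting odd variables and integrating by parts.

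Second, there is a variational Leibniz rule for $\frac{\delta}{\delta v^\alpha}$ and $\frac{\delta}{\delta \rho_\alpha}$ applied to the representative of $[P,Q]$. To use it, I will realize each local functional as the "Hamiltonian vector field" it generates. Concretely, to $P\in\hat{\mathscr{F}}^p$ I associate the evolutionary super-derivation $D_P$ of $\hat{\mathscr{A}}$ defined by
\[
D_P=\sum_{s}\Bigl(\partial^s\frac{\delta P}{\delta\rho_\alpha}\frac{\partial}{\partial v^{\alpha,s}}+(-1)^p\partial^s\frac{\delta P}{\delta v^\alpha}\frac{\partial}{\partial\rho_\alpha^s}\Bigr).
\]
This derivation commutes with $\partial$, so it descends to $\hat{\mathscr{F}}$, and by integrating by parts one finds $[P,Q]=D_P(Q)$ on $\hat{\mathscr{F}}$, up to a sign convention.

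Part 1, the graded symmetry, I will check directly from the formula. Swapping $P$ and $Q$ in $\int\frac{\delta P}{\delta \rho}\frac{\delta Q}{\delta v}+(-1)^p\frac{\delta P}{\delta v}\frac{\delta Q}{\delta\rho}$ requires commuting factors of parities $p-1$ and $q$, or $p$ and $q-1$. The resulting signs combine to $(-1)^{pq}$ after a short bookkeeping of the $(-1)^p$ and $(-1)^q$ prefactors.

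For part 2, the key step is to prove that $P\mapsto D_P$ is a Lie superalgebra morphism, i.e. $D_{[P,Q]}=[D_P,D_Q]$ (graded commutator), at least up to sign conventions. Since evolutionary derivations are determined by their action on the generators $v^\alpha$ and $\rho_\alpha$, this amounts to computing $\frac{\delta}{\delta v^\alpha}[P,Q]$ and $\frac{\delta}{\delta\rho_\alpha}[P,Q]$. I would use the identity
\[
\frac{\delta}{\delta u}\int D_P(f)=D_P\Bigl(\frac{\delta f}{\delta u}\Bigr)\pm(\text{adjoint of the linearization of }D_P)\,\frac{\delta f}{\delta u},
\]
together with the self-adjointness of the Hessian-type operators coming from the symmetry of second variational derivatives. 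This computation, the "variational derivative of a bracket" formula, is where I expect the main obstacle to lie. The risk is in the sign bookkeeping with odd variables, so I would organize it through the super-symplectic formalism: treat $\delta/\delta v$ and $\delta/\delta\rho$ as a Darboux pair and note that $\hat{\mathscr{F}}$ with this bracket is the odd Poisson (Gerstenhaber) algebra of the shifted cotangent bundle.

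Given the morphism property, the Jacobi identity follows formally:
\[
[[P,Q],R]=D_{[P,Q]}R=[D_P,D_Q]R=D_P D_Q R\mp D_Q D_P R=[P,[Q,R]]\mp[Q,[P,R]].
\]
Rewriting this with part 1 yields the cyclic form stated, with the signs $(-1)^{pr},(-1)^{qp},(-1)^{rq}$.
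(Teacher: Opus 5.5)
The paper gives no proof of this statement and simply quotes it from Liu--Zhang; your argument is correct and is the standard one for this result. Concretely, $[P,Q]=\int D_P(Q)$ holds exactly (no sign) by integration by parts, and the morphism property takes the precise form $D_{[P,Q]}=(-1)^{p-1}[D_P,D_Q]$; your computation via the adjoint of the linearization together with the (graded) self-adjointness of second variational derivatives does establish this. With this sign, $[[P,Q],R]=(-1)^{p-1}\bigl([P,[Q,R]]-(-1)^{(p-1)(q-1)}[Q,[P,R]]\bigr)$, and rewriting the last two brackets via part 1 gives exactly the cyclic identity with coefficients $(-1)^{pr},(-1)^{qp},(-1)^{rq}$.
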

So once a coordinate system is fixed, a Hamiltonian structure can be identified with a matrix differential operator so that it satisfies \eqref{eq:antisym}, \eqref{eq:leadingterm} and the bracket \eqref{eq:poisson} satisfies the Jacobi identity.

Define Miura-type transformations to be coordinate changes of $J^\infty(\hat{\mathfrak{X}})$ that commute with $\partial$. They form a group. Given two coordinate systems $(v^{\alpha,s};\rho_\alpha^s)$ and $(\widetilde{v}^{\alpha,s};\widetilde{\rho}_\alpha^s)$, a Miura-type transformation relating them is of the following form:
\begin{equation}
    v^{\alpha,s}\mapsto \widetilde{v}^{\alpha,s}=\sum_{d\geq 0}\partial^s\bigl(f^{\alpha}_d(v)\bigr),\quad \rho^s_\alpha\mapsto \widetilde{\rho}_\alpha^s=\sum_{i\geq 0}(-1)^i\partial^{i+s}\left(\frac{\partial v^\beta}{\partial \widetilde{v}^{\alpha,i}}\rho_\beta\right),\quad f^\alpha_d(v)\in\hat{\mathscr{A}}^0_{v,d}.
    \label{eq:miura}
\end{equation}
In particular, we have
\begin{equation}
    f^\alpha_0\bigg\vert_{v^\bullet\rightarrow 0}=0,\quad\det \left(\frac{\partial f^\alpha_0}{\partial v^\beta}\right)\bigg\vert_{v^\bullet\rightarrow 0}\neq 0.
    \label{eq:localiso}
\end{equation}
Note that a Miura-type transformation is determined by a collection of differential polynomials $\{f^\alpha_d\}$ satisfying \eqref{eq:localiso}.
It is said to be of the 2nd kind if $f^\alpha_0(v)=v^\alpha$. Miura-type transformations are shown in \cite{LZ2} to preserve the Schouten-Nijenhuis bracket.

\subsection{The fixed locus of differential polynomials under $\Gamma$-action}
Suppose that there is a $\Gamma$-action on $H$. Choose a basis $\{\phi_\alpha\}$ for $H$. Denote by $\{\phi^\alpha\}$ the corresponding dual basis of $\mathrm{Hom}_\mathbb{C}(H,\mathbb{C})$. Then we have the following isomorphism
\begin{equation*}
    \mathbb{C}[u^1,\ldots\,u^N]\rightarrow\mathcal{O}_X(X),\quad u^\alpha\mapsto\phi^\alpha.
\end{equation*}
After taking formal completion, it gives rise to a coordinate system $(u^\alpha)$ on $\mathfrak{X}$, and therefore on $\hat{\mathfrak{X}}$ and $J^\infty(\hat{\mathfrak{X}})$.
\begin{defn}
    A coordinate system obtained in this way is called a dual coordinate system with respect to $\{\phi_\alpha\}$. We will use the notation $(u^{\alpha,s};\theta_\alpha^s)$ for the dual coordinate system on $J^\infty(\hat{\mathfrak{X}})$.
    \label{defn:dual}
\end{defn}

The $\Gamma$-action on $H$ determines an algebraic action on $X$, which lifts naturally to $\mathfrak{X}$ and $\hat{\mathfrak{X}}$. By requiring that the $\Gamma$-action commutes with the $\partial$-action, we obtain an induced $\Gamma$-action on $\hat{\mathscr{A}}$. It can be described in the dual coordinate system $(u^{\alpha,s};\theta_\alpha^s)$ by
\begin{equation}
    \gamma.u^{\alpha,s}=\act(\gamma^{-1})^\alpha_\beta u^{\beta,s},\quad\gamma.\theta_\alpha^s=\act(\gamma)_\alpha^\beta\theta_\beta^s,\quad\gamma\in\Gamma.
    \label{eq:action}
\end{equation}

\begin{defn}
    A coordinate system $(v^{\alpha,s};\rho_\alpha^s)$ is called a $(\Gamma$-$)$linearized coordinate system with respect to the basis $\{\phi_\alpha\}$ if the $\Gamma$-action on $\hat{\mathscr{A}}$ satisfies \eqref{eq:action}.
\end{defn}

Denote by $X^\Gamma$ the fixed locus of $\Gamma$-action on $X$. The constructions in Section~\ref{subsec:diffpoly} can be carried over to $X^\Gamma$. When referring to a construction that applies to both $X$ and $X^\Gamma$, we will attach a superscript $(-)^\Gamma$ to indicate those associated with $X^\Gamma$. For instance, we will use the following notations $\mathcal{O}_{\mathfrak{X}^\Gamma},J^\infty(\widehat{\mathfrak{X}^\Gamma}),\partial^\Gamma,\hat{\mathscr{A}}^\Gamma,\hat{\mathscr{F}}^\Gamma$.
Let $\iota:X^\Gamma\hookrightarrow X$ be the inclusion. It induces a homomorphism of $\mathbb{C}$-superalgebras
\begin{equation*}
    \iota^*\colon\hat{\mathscr{A}}\rightarrow\hat{\mathscr{A}}^\Gamma.
\end{equation*}
Consider the two-sided ideal $\hat{\mathscr{I}}$ of $\hat{\mathscr{A}}$ generated by
\begin{equation*}
    f-\gamma.f,\quad\gamma\in\Gamma,f\in\hat{\mathscr{A}}.
\end{equation*}
Then $\iota^*$ is the quotient map
\begin{equation*}
    \hat{\mathscr{A}}\rightarrow\hat{\mathscr{A}}^\Gamma\cong\hat{\mathscr{A}}/\hat{\mathscr{I}}.
\end{equation*}

A finite dimensional $\mathbb{C}$-representations of finite groups are always semisimple (see e.g. \cite{Ser}). So we have a direct sum decomposition of $\Gamma$-representations
\begin{equation}
    H=H^\Gamma\oplus H^\mov.
    \label{eq:decom}
\end{equation}
where 
\begin{equation*}
    H^\Gamma=\{e\in H\mid {\gamma}.e=e,\,\forall \gamma\in \Gamma\}
\end{equation*}
is the fixed part and $H^\mov$ is the direct sum of nontrivial representations under the irreducible decomposition. We can choose basis $\{\phi_{\alpha^\prime}\}_{\alpha^\prime\in I}$ for $H^\Gamma$ and $\{\phi_{\alpha^\pprime}\}_{\alpha^\pprime\in J}$ for $H^\mov$ respectively, where
\begin{equation*}
    I=\{\alpha^\prime\mid 1\leq\alpha^\prime\leq \dim_\mathbb{C} H^\Gamma\},\quad J=\{\alpha^\pprime\mid 1+\dim_\mathbb{C} H^\Gamma\leq\alpha^\pprime\leq N\}.
\end{equation*}
They give a basis $\{\phi_\alpha\}$ of $H$. With respect to this basis, the matrices $\act(\gamma)$ satisfy
\begin{equation}
    \act(\gamma)^{\beta}_{\alpha^\prime}=\delta_{\alpha^\prime}^\beta,\quad \act(\gamma)^{\beta^\prime}_{\alpha^\pprime}=0,\quad \forall\alpha^\prime,\beta^\prime\in I,\,\alpha^\pprime\in J,\,1\leq\beta\leq N.
    \label{eq:simplifyactionmatrix}
\end{equation}
In a $\Gamma$-linearized coordinate system $(v^{\alpha,s};\rho_{\alpha}^s)$ with respect to this basis, the ideal $\hat{\mathscr{I}}$ is generated by
\begin{equation*}
    v^{\alpha^\pprime,s},\rho_{\alpha^\pprime}^s,\quad\alpha^\pprime\in J,\,s\geq 0.
\end{equation*}
In what follows, we always work with a basis of $H$ chosen with respect to the decomposition~\eqref{eq:decom}.

Note that $\iota^*\circ\partial=\partial^\Gamma\circ\iota^*$. The homomorphism $\iota^*$ also descends to a $\mathbb{C}$-linear map between spaces of local functionals
\begin{equation*}
    \bar{\iota}^*\colon\hat{\mathscr{F}}\rightarrow\hat{\mathscr{F}}^\Gamma.
\end{equation*}
Denote by $[-,-]^\Gamma$ the Schouten-Nijenhuis bracket on $\hat{\mathscr{F}}^\Gamma$. In general, the map $\bar{\iota}^*$ does not preserve the Schouten-Nijenhuis bracket, let alone Hamiltonian structures. However, the following result holds.
\begin{lem}
    Let $(v^{\alpha,s};\rho_{\alpha}^s)$ be a $\Gamma$-linearized coordinate system for $J^\infty(\hat{\mathfrak{X}})$, and $\widetilde{Q}\in\hat{\mathscr{A}}^q$ be a differential polynomial  satisfying
    \begin{equation}
        \iota^*\left(\frac{\partial\widetilde{Q}}{\partial v^{\alpha^\pprime,s}}\right)=\iota^*\left(\frac{\partial\widetilde{Q}}{\partial\rho_{\alpha^\pprime}^s}\right)=0,\quad\forall\alpha^\pprime\in J,\, s\geq 0.
        \label{eq:main}
    \end{equation}
    Then $[\bar{\iota}^*(-),\bar{\iota}^*(Q)]^\Gamma=\bar{\iota}^*([-,Q])$ in $\hat{\mathscr{F}}^\Gamma$, where $Q=\int\widetilde{Q}$.
    \label{lem:main}
\end{lem}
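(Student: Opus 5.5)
Let $P=\int\widetilde{P}\in\hat{\mathscr{F}}^p$ be arbitrary. I will compare both sides directly from the definition of the Schouten--Nijenhuis bracket, working in the $\Gamma$-linearized coordinates $(v^{\alpha,s};\rho_\alpha^s)$. Then $\iota^*$ is the quotient by the ideal generated by $v^{\alpha^\pprime,s},\rho_{\alpha^\pprime}^s$ with $\alpha^\pprime\in J$, and $\hat{\mathscr{A}}^\Gamma$ has coordinates $(v^{\alpha^\prime,s};\rho_{\alpha^\prime}^s)_{\alpha^\prime\in I}$. In these coordinates $\iota^*$ simply sets the $J$-variables to zero.

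The first step is to show that $\iota^*$ commutes with partial derivatives in the fixed directions:
\begin{equation*}
\iota^*\frac{\partial f}{\partial v^{\alpha^\prime,s}}=\frac{\partial\,\iota^*f}{\partial v^{\alpha^\prime,s}},\qquad \iota^*\frac{\partial f}{\partial \rho_{\alpha^\prime}^s}=\frac{\partial\,\iota^*f}{\partial \rho_{\alpha^\prime}^s},\qquad \alpha^\prime\in I.
\end{equation*}
This holds because setting some variables to zero commutes with differentiating in the other variables. Together with $\iota^*\circ\partial=\partial^\Gamma\circ\iota^*$, it gives $\iota^*\frac{\delta f}{\delta v^{\alpha^\prime}}=\frac{\delta \iota^*f}{\delta v^{\alpha^\prime}}$, and the same for $\rho_{\alpha^\prime}$. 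Hence for every $\alpha^\prime\in I$ the variational derivatives of $\bar\iota^*(P)$ and $\bar\iota^*(Q)$ in $\hat{\mathscr{F}}^\Gamma$ are the $\iota^*$-images of those of $P$ and $Q$.

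The second step uses the hypothesis \eqref{eq:main}. Since $\iota^*\circ\partial=\partial^\Gamma\circ\iota^*$ and $\iota^*$ kills each $\partial\widetilde Q/\partial v^{\alpha^\pprime,s}$, we get
\begin{equation*}
\iota^*\frac{\delta Q}{\delta v^{\alpha^\pprime}}=\sum_s(-\partial^\Gamma)^s\,\iota^*\frac{\partial\widetilde Q}{\partial v^{\alpha^\pprime,s}}=0,
\end{equation*}
and likewise $\iota^*\frac{\delta Q}{\delta\rho_{\alpha^\pprime}}=0$. The variational derivative is well defined on $\hat{\mathscr{F}}$, so this conclusion does not depend on the representative $\widetilde Q$.

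Now split $[P,Q]$, represented by $\frac{\delta P}{\delta \rho_\alpha}\frac{\delta Q}{\delta v^\alpha}+(-1)^p\frac{\delta P}{\delta v^\alpha}\frac{\delta Q}{\delta \rho_\alpha}$, into the sum over $\alpha^\prime\in I$ and the sum over $\alpha^\pprime\in J$. Apply $\iota^*$, which is a superalgebra homomorphism. Every $J$-term contains a factor $\iota^*\frac{\delta Q}{\delta v^{\alpha^\pprime}}$ or $\iota^*\frac{\delta Q}{\delta \rho_{\alpha^\pprime}}$, so it vanishes. By the first step, each $I$-term equals the corresponding term of $[\bar\iota^*P,\bar\iota^*Q]^\Gamma$. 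Passing to the quotient by $\partial^\Gamma\hat{\mathscr{A}}^\Gamma$ gives $\bar\iota^*([P,Q])=[\bar\iota^*P,\bar\iota^*Q]^\Gamma$.

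The step needing most care is the first: checking that $\iota^*$ intertwines the variational derivatives in the $I$-directions. The point to verify is that $\partial$ preserves the ideal $\hat{\mathscr{I}}$, which holds since it maps the generators $v^{\alpha^\pprime,s},\rho_{\alpha^\pprime}^s$ to $v^{\alpha^\pprime,s+1},\rho_{\alpha^\pprime}^{s+1}$. It is also worth checking that no sign issues arise from the odd variables. There are none, because $\iota^*$ is an even homomorphism and the bracket formula is applied verbatim on both sides.
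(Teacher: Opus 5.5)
Your proof is correct and follows the paper's argument. You show that $\iota^*$ kills the $J$-direction variational derivatives of $Q$, that $\iota^*$ intertwines the $I$-direction variational derivatives, and then rewrite the $I$-indexed sum in $[\bar{\iota}^*(P),\bar{\iota}^*(Q)]^\Gamma$ as the $\iota^*$-image of the full sum defining $[P,Q]$. You also justify two points the paper leaves implicit: that $\partial$ preserves $\hat{\mathscr{I}}$, and that the conclusion does not depend on the representative $\widetilde{Q}$.
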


\begin{proof}
The second assumption implies
\begin{equation*}
    \iota^*\left(\frac{\delta Q}{\delta v^{\alpha^\pprime}}\right)=\iota^*\left(\frac{\delta Q}{\delta\rho_{\alpha^\pprime}}\right)=0,\quad\forall\alpha^\pprime\in J.
\end{equation*}
Note that we have the following compatible results of $\iota^*,\bar{\iota}^*$ with variational derivatives:
\begin{equation*}
    \iota^*\circ\frac{\delta}{\delta v^{\alpha^\prime}}=\frac{\delta}{\delta v^{\alpha^\prime}}\circ\bar{\iota}^*,\quad\iota^*\circ\frac{\delta}{\delta\rho_{\alpha^\prime}}=\frac{\delta}{\delta\rho_{\alpha^\prime}}\circ\bar{\iota}^*,\quad\forall\alpha^\prime\in I.
\end{equation*}
For any $P\in\hat{\mathscr{F}}^p$, we have
\begin{align*}
    &[\bar{\iota}^*(P),\bar{\iota}^*(Q)]^\Gamma\\
    =&\int\frac{\delta\bar{\iota}^*(P)}{\delta\rho_{\alpha^\prime}}\frac{\delta\bar{\iota}^*(Q)}{\delta v^{\alpha^\prime}}+(-1)^{p}\frac{\delta\bar{\iota}^*(P)}{\delta v^{\alpha^\prime}}\frac{\delta\bar{\iota}^*(Q)}{\delta\rho_{\alpha^\prime}}\\
    =&\int\iota^*\left(\frac{\delta P}{\delta\rho_{\alpha^\prime}}\right)\iota^*\left(\frac{\delta Q}{\delta v^{\alpha^\prime}}\right)+(-1)^{p}\iota^*\left(\frac{\delta P}{\delta v^{\alpha^\prime}}\right)\iota^*\left(\frac{\delta Q}{\delta\rho_{\alpha^\prime}}\right)\\
    =&\int\iota^*\left(\frac{\delta P}{\delta\rho_{\alpha}}\right)\iota^*\left(\frac{\delta Q}{\delta v^{\alpha}}\right)+(-1)^{p}\iota^*\left(\frac{\delta P}{\delta v^{\alpha}}\right)\iota^*\left(\frac{\delta Q}{\delta\rho_{\alpha}}\right)\\
    =&\,\overline{\iota}^*\left([P,Q]\right).
\end{align*}
Here and henceforth, whenever two repeated indices with superscript $(-)^\prime$ (resp. $(-)^\pprime$) appear, the summation is taken over the index set $I$ (resp. $J$) instead of $\{1,2,\ldots,N\}$.
\end{proof}

\begin{defn}
    Let $(v^{\alpha,s};\rho_{\alpha}^s)$ and $(\widetilde{v}^{\alpha,s};\widetilde{\rho}_{\alpha}^s)$ be two coordinate systems. A Miura-type transformation of the 2nd kind
    \begin{equation*}
        v^{\alpha}\mapsto\widetilde{v}^\alpha=v^\alpha+\sum_{d\geq 1}f^\alpha_d(v),\quad f^\alpha_d\in\hat{\mathscr{A}}^0_{v,d}
    \end{equation*}
    is called $(\Gamma$-$)$linearized if
    \begin{equation}
        \gamma.\Bigl(\sum_{d\geq 1}f^\beta_d\Bigr)=\act(\gamma^{-1})_\alpha^\beta\Bigl(\sum_{d\geq 1}f^\alpha_d\Bigr),\quad\forall\gamma\in\Gamma,\,\beta\in\{1,2,\dots,N\}.
        \label{eq:linmiura}
    \end{equation}
\end{defn}

\begin{lem}
    $\Gamma$-linearized Miura-type transformations form a group.
\end{lem}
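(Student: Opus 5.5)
Since Miura-type transformations of the 2nd kind already form a group, it suffices to show that the $\Gamma$-linearized ones contain the identity and are closed under composition and inversion. The identity ($f^\alpha_d=0$ for all $d\geq1$) satisfies \eqref{eq:linmiura} trivially. To handle the other two properties cleanly, I will first recast \eqref{eq:linmiura} as an equivariance condition. Here the action is the one on $\hat{\mathscr{A}}_v$ in a fixed $\Gamma$-linearized coordinate system $(v^{\alpha,s};\rho^s_\alpha)$, where $\gamma.v^{\alpha,s}=\act(\gamma^{-1})^\alpha_\beta v^{\beta,s}$. Writing $\widetilde v^\alpha=v^\alpha+F^\alpha(v)$ with $F^\alpha=\sum_{d\geq1}f^\alpha_d$, condition \eqref{eq:linmiura} is equivalent to
\begin{equation*}
\gamma.\widetilde v^{\beta}=\act(\gamma^{-1})^\beta_\alpha\widetilde v^{\alpha},\quad\forall\gamma\in\Gamma.
\end{equation*}
Since the $\Gamma$-action commutes with $\partial$, this equivalently says $\gamma.\widetilde v^{\beta,s}=\act(\gamma^{-1})^\beta_\alpha\widetilde v^{\alpha,s}$ for all $s$. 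In other words, the new coordinates $\widetilde v^{\alpha,s}$ transform under $\Gamma$ exactly as the $v^{\alpha,s}$ do; the odd coordinates are not involved in \eqref{eq:linmiura}.

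\textbf{Composition.} Let $v\mapsto\widetilde v=v+F(v)$ and $\widetilde v\mapsto\hat v=\widetilde v+G(\widetilde v)$ both be linearized, so that $\hat v=v+F(v)+G(v+F(v))$. The key point is that the substitution map $\sigma\colon h(\widetilde v^{\bullet})\mapsto h(v^\bullet+\partial^\bullet F)$ is $\Gamma$-equivariant. It is a continuous algebra homomorphism, and by the reformulation above it sends the generators $\widetilde v^{\alpha,s}$ to elements that transform by the same matrices. Hence, since the action is by algebra automorphisms,
\begin{equation*}
\gamma.\bigl(G^\beta(v+F)\bigr)=\bigl(\gamma.G^\beta\bigr)(v+F)=\act(\gamma^{-1})^\beta_\alpha G^\alpha(v+F).
\end{equation*}
Here $\gamma.G^\beta$ is first computed in the $\widetilde v$-variables using the linearized hypothesis on $G$. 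Summing this with the corresponding identity for $F^\beta$ shows that the composite satisfies \eqref{eq:linmiura}.

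\textbf{Inversion.} The inverse $v=\widetilde v+K(\widetilde v)$ exists among 2nd-kind transformations. Apply $\gamma$ to the identity $K^\beta(v+F(v))=-F^\beta(v)$ and use the equivariance of $\sigma$ established above. This gives
\begin{equation*}
(\gamma.K^\beta)(v+F)=\act(\gamma^{-1})^\beta_\alpha K^\alpha(v+F).
\end{equation*}
Since $\sigma$ is injective (indeed an isomorphism), it follows that $\gamma.K^\beta=\act(\gamma^{-1})^\beta_\alpha K^\alpha$. Alternatively, one can solve for $K$ degree by degree in $\varepsilon$ and check \eqref{eq:linmiura} inductively.

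The main obstacle I expect is making the equivariance of $\sigma$ precise: the $\Gamma$-action on $\hat{\mathscr{A}}$ is defined intrinsically, while $\gamma.G$ is computed in two different coordinate systems. I would handle this by noting that $\Gamma$ acts linearly on the $v^{\alpha,s}$, and that a continuous algebra homomorphism is determined by its values on generators. Equivariance of $\sigma$ then reduces to checking it on the generators $\widetilde v^{\alpha,s}$, which is exactly the reformulated condition.
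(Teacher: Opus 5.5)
Your proof is correct and is essentially the paper's argument. The paper also proves closure under inversion by applying $\gamma$ to the identity $\sum_{d\geq1}\widetilde f^\alpha_d(\widetilde v)=-\sum_{d\geq1}f^\alpha_d\bigl(v(\widetilde v)\bigr)$, which is your $K^\beta(v+F)=-F^\beta$ written in the other chart, and it treats composition as similar. The only difference is that the paper lets $\gamma$ act intrinsically on $\hat{\mathscr{A}}$, so $f(v(\widetilde v))$ and $f(v)$ are the same element and no substitution lemma is needed; you instead fix a linearized chart and make the equivariance and injectivity of $\sigma$ explicit, which also gives an actual argument for composition.
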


\begin{proof}
    We prove here that the inverse of a $\Gamma$-linearized Miura-type transformation is $\Gamma$-linearized. The fact that a composition of two $\Gamma$-linearized Miura-type transformations is $\Gamma$-linearized can be proved similarly. Let 
    \begin{equation*}
        v^{\alpha}\mapsto\widetilde{v}^\alpha=v^\alpha+\sum_{d\geq 1}f^\alpha_d(v),\quad f^\alpha_d\in\hat{\mathscr{A}}^0_{v,d}
    \end{equation*}
    be a $\Gamma$-linearized Miura-type transformation with inverse
    \begin{equation*}
        \widetilde{v}^\alpha\mapsto v^{\alpha}= \widetilde{v}^\alpha+\sum_{d\geq 1}\widetilde{f}^\alpha_d(\widetilde{v}),\quad\widetilde{f}^\alpha_d\in\hat{\mathscr{A}}^0_{\widetilde{v},d}.
    \end{equation*}
    Then we have
    \begin{equation*}
        0=v^\alpha(\widetilde{v})+\sum_{d\geq 1}f^\alpha_d\bigl(v(\widetilde{v})\bigr)-\widetilde{v}^\alpha=\sum_{d\geq 1}\widetilde{f}^\alpha_d(\widetilde{v})+\sum_{d\geq 1}f^\alpha_d\bigl(v(\widetilde{v})\bigr),
    \end{equation*}
    and therefore,
    \begin{align*}
        &\gamma.\left(\sum_{d\geq 1}\widetilde{f}^\beta_d(\widetilde{v})\right)=-\gamma.\left(\sum_{d\geq 1}f^\beta_d\bigl(v(\widetilde{v})\bigr)\right)\\
        =&-\act(\gamma^{-1})_\alpha^\beta\left(\sum_{d\geq 1}f^\alpha_d\bigl(v(\widetilde{v})\bigr)\right)
        =\act(\gamma^{-1})_\alpha^\beta\left(\sum_{d\geq 1}\widetilde{f}^\alpha_d(\widetilde{v})\right).
    \end{align*}
    The lemma is proved.
\end{proof}

\begin{lem}
    Let $(v^{\alpha,s};\rho_{\alpha}^s)$, $(\widetilde{v}^{\alpha,s};\widetilde{\rho}_{\alpha}^s)$ be two coordinate systems, and
    \begin{equation*}
        v^{\alpha}\mapsto\widetilde{v}^\alpha=v^\alpha+\sum_{d\geq 1}\widetilde{f}^\alpha_d(v),\quad \widetilde{f}^\alpha_d\in\hat{\mathscr{A}}^0_{v,d}
    \end{equation*}
    be a $\Gamma$-linearized Miura-type transformation. If one of them is a $\Gamma$-linearized coordinate system, then so is the other.
    \label{lem:linearizedmiura}
\end{lem}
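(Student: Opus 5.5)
Suppose first that $(v^{\alpha,s};\rho_\alpha^s)$ is $\Gamma$-linearized; the converse direction will follow by symmetry. The inverse of a $\Gamma$-linearized Miura-type transformation is again $\Gamma$-linearized (by the preceding lemma), and the definition \eqref{eq:linmiura} is symmetric in the roles of source and target once the inverse is written in the form $v^\alpha=\widetilde{v}^\alpha+\sum_{d\geq1}\widetilde{f}^\alpha_d(\widetilde v)$. Hence it suffices to verify \eqref{eq:action} for $(\widetilde v^{\alpha,s};\widetilde\rho_\alpha^s)$, assuming it holds for $(v^{\alpha,s};\rho_\alpha^s)$.

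For the even variables the check is immediate. The $\Gamma$-action on $\hat{\mathscr{A}}$ is by superalgebra automorphisms commuting with $\partial$. Using \eqref{eq:action} for $v$ together with \eqref{eq:linmiura},
\begin{equation*}
\gamma.\widetilde v^{\beta}=\act(\gamma^{-1})^\beta_\alpha v^{\alpha}+\act(\gamma^{-1})^\beta_\alpha\sum_{d\geq1}\widetilde f^\alpha_d=\act(\gamma^{-1})^\beta_\alpha\widetilde v^\alpha .
\end{equation*}
Applying $\partial^s$, which commutes with $\gamma.$, gives $\gamma.\widetilde v^{\beta,s}=\act(\gamma^{-1})^\beta_\alpha\widetilde v^{\alpha,s}$.

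The odd variables are the main step. By \eqref{eq:miura}, $\widetilde\rho_\alpha^s=\sum_{i}(-1)^i\partial^{i+s}\bigl(\frac{\partial v^\beta}{\partial\widetilde v^{\alpha,i}}\rho_\beta\bigr)$, so I need the transformation law of the partial derivatives $\partial v^\beta/\partial\widetilde v^{\alpha,i}$ under $\gamma$. Write $\gamma.$ as a substitution on the coordinates. The previous step shows that in the $\widetilde v$ coordinates $\gamma$ acts linearly on even variables, via $\widetilde v^{\alpha,i}\mapsto A^\alpha_\mu\widetilde v^{\mu,i}$ with $A=\act(\gamma^{-1})$. Likewise $v^\beta\mapsto A^\beta_\nu v^\nu$. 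Since $v^\beta=v^\beta(\widetilde v)$ is a function of the even jet variables, $\gamma.(v^\beta(\widetilde v))=v^\beta(A\widetilde v)$. On the other hand it equals $A^\beta_\nu v^\nu(\widetilde v)$. Differentiating the identity $v^\beta(A\widetilde v)=A^\beta_\nu v^\nu(\widetilde v)$ by the chain rule with respect to $\widetilde v^{\alpha,i}$ gives
\begin{equation*}
\gamma.\Bigl(\frac{\partial v^\beta}{\partial\widetilde v^{\mu,i}}\Bigr)A^\mu_\alpha=A^\beta_\nu\frac{\partial v^\nu}{\partial\widetilde v^{\alpha,i}},
\qquad\text{so}\qquad
\gamma.\Bigl(\frac{\partial v^\beta}{\partial\widetilde v^{\alpha,i}}\Bigr)=A^\beta_\nu\frac{\partial v^\nu}{\partial\widetilde v^{\mu,i}}(A^{-1})^\mu_\alpha .
\end{equation*}
Combine this with $\gamma.\rho_\beta=\act(\gamma)^\lambda_\beta\rho_\lambda=(A^{-1})^\lambda_\beta\rho_\lambda$. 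The product $A^\beta_\nu(A^{-1})^\lambda_\beta$ collapses to $\delta^\lambda_\nu$, leaving
\begin{equation*}
\gamma.\Bigl(\frac{\partial v^\beta}{\partial\widetilde v^{\alpha,i}}\rho_\beta\Bigr)=\act(\gamma)^\mu_\alpha\frac{\partial v^\nu}{\partial\widetilde v^{\mu,i}}\rho_\nu .
\end{equation*}
Applying $(-1)^i\partial^{i+s}$ and summing over $i$ then yields $\gamma.\widetilde\rho^s_\alpha=\act(\gamma)^\mu_\alpha\widetilde\rho^s_\mu$, as required.

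The point needing care is the chain-rule computation. It requires viewing the automorphism $\gamma.$ as a linear substitution of the $\widetilde v$ jet coordinates (established in the previous step). It also requires noting that partial derivatives with respect to $\widetilde v^{\alpha,i}$ are taken with the other $\widetilde v$ jet variables held fixed, so they intertwine correctly with this substitution. Both facts hold because $A$ acts identically on every jet level $i$.
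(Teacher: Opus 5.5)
Your proposal is correct and takes essentially the same route as the paper. You reduce to the case where $(v^{\alpha,s};\rho^s_\alpha)$ is linearized via the group property, check the even variables directly from \eqref{eq:linmiura}, derive $\gamma.\bigl(\partial v^\beta/\partial\widetilde v^{\alpha,i}\bigr)=\act(\gamma^{-1})^\beta_\nu\act(\gamma)^\mu_\alpha\,\partial v^\nu/\partial\widetilde v^{\mu,i}$, and substitute this into the formula for $\widetilde\rho^s_\alpha$ in \eqref{eq:miura}; your explicit chain-rule justification of that transformation law is a detail the paper leaves implicit.
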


\begin{proof}
    Since $\Gamma$-linearized Miura-type transformations form a group, we may assume that $(v^{\alpha,s};\rho_{\alpha}^s)$ is a $\Gamma$-linearized coordinate system. The equation \eqref{eq:linmiura} shows that 
    \begin{align*}
        &\gamma.\widetilde{v}^\alpha=\gamma.\bigl(v^\alpha(\widetilde{v})\bigr)+\gamma.\left(\sum_{d\geq 1}\widetilde{f}^\alpha_d\bigl(v(\widetilde{v})\bigr)\right)\\
        =&\,\act(\gamma^{-1})^\alpha_\beta v^\beta(\widetilde{v})+\act(\gamma^{-1})_\beta^\alpha\left(\sum_{d\geq 1}\widetilde{f}^\beta_d\bigl(v(\widetilde{v})\bigr)\right)=\act(\gamma^{-1})_\beta^\alpha\widetilde{v}^\beta.
    \end{align*}
    In particular, we have
    \begin{equation*}
        \gamma.\left(\frac{\partial v^\beta(\widetilde{v})}{\partial \widetilde{v}^{\alpha,s}}\right)=\act(\gamma^{-1})^\beta_\mu\act(\gamma)^\nu_\alpha\frac{\partial v^\mu(\widetilde{v})}{\partial\widetilde{v}^{\nu,s}}
    \end{equation*}
    and therefore,
    \begin{align*}
        &\gamma.\widetilde{\rho}_{\alpha}=\sum_{s\geq 0}(-\partial)^s\left(\gamma.\left(\frac{\partial v^\beta}{\partial \widetilde{v}^{\alpha,s}}\right)\gamma.\rho_\beta\right)\\
        =&\,\sum_{s\geq 0}(-\partial)^s\left(\act(\gamma^{-1})^\beta_\mu\act(\gamma)_\alpha^\nu\frac{\partial v^\mu}{\partial\widetilde{v}^{\nu,s}}\act(\gamma)_\beta^\zeta\rho_\zeta\right)\\
        =&\,\act(\gamma)_\alpha^\nu\left(\sum_{s\geq 0}(-\partial)^s\left(\frac{\partial v^\mu}{\partial\widetilde{v}^{\nu,s}}\rho_\mu\right)\right)=\act(\gamma)_\alpha^\nu\widetilde{\rho}_\nu.
    \end{align*}
    This shows that $(\widetilde{v}^{\alpha,s};\widetilde{\rho}_{\alpha}^s)$ is also a $\Gamma$-linearized coordinate system.

    For the second assertion, it suffices to check that the $\Gamma$-action on the even variables and odd variables match. By definition, the verification on even variables is trivial. For the odd variables, we use the calculations above.
\end{proof}
\begin{rmk}
    For a $\Gamma$-linearized Miura-type transformation
    \begin{equation*}
        v^\alpha\mapsto\widetilde{v}^\alpha=v^\alpha+\sum_{d\geq 1}f^\alpha_d
    \end{equation*}
    between $\Gamma$-linearized coordinate systems, the differential polynomials $\iota^*(f^{\alpha^\prime}_d),\alpha^\prime\in I$ define a Miura-type transformation of the 2nd kind on $\hat{\mathscr{A}}^\Gamma$ and fit into the following commutative diagram
    \begin{equation*}
        \begin{tikzcd}
            \hat{\mathscr{A}}_{\widetilde{v}} \arrow[r, "\cong"] \arrow[d, "\iota^*"] 
            & \hat{\mathscr{A}}_{v} \arrow[d, "\iota^*"] \\
            \hat{\mathscr{A}}^\Gamma_{\widetilde{v}} \arrow[r, "\cong"] 
            & \hat{\mathscr{A}}^\Gamma_{v}
        \end{tikzcd}
    \end{equation*}
    \label{rmk:inducedmiura}
\end{rmk}

The following trick, originating from Proposition 2.6 of \cite{LRZ}, will be used repeatedly throughout this paper.
\begin{lem}
    \label{lem:trick}
    Fix a $\Gamma$-action on $H$ and a positive integer $n$. We choose a basis for $H$ so that the equalities in \eqref{eq:simplifyactionmatrix} hold. Suppose that there is a collection of complex numbers
    \begin{equation*}
        c_{\alpha_1,s_1;\cdots;\alpha_n,s_n}\in\mathbb{C},\quad 1\leq\alpha_i\leq N,\,s_i\geq 0,
    \end{equation*}
    satisfying
    \begin{equation*}
        \act(\gamma)_{\alpha_1}^{\beta_1}\cdots\act(\gamma)_{\alpha_n}^{\beta_n}c_{\beta_1,s_1;\cdots;\beta_n,s_n}=c_{\alpha_1,s_1;\cdots;\alpha_n,s_n},\quad\forall\gamma\in\Gamma.
    \end{equation*}
    Then we have
    \begin{equation*}
        c_{\alpha^\pprime,s;\alpha_1^\prime,s_1;\cdots;\alpha_{n-1}^\prime,s_{n-1}}=0,\quad\forall\alpha^\pprime\in J,\,\alpha_i^\prime\in I,\,s,s_i\geq 0.
    \end{equation*}
\end{lem}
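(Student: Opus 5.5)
Fix the indices $s,s_1,\ldots,s_{n-1}$ and the fixed-part indices $\alpha_1^\prime,\ldots,\alpha_{n-1}^\prime\in I$. The idea is to average the invariance relation over the group and use that $H^\mov$ contains no trivial summand. Define a vector
\begin{equation*}
    w=\sum_{\beta=1}^N c_{\beta,s;\alpha_1^\prime,s_1;\cdots;\alpha_{n-1}^\prime,s_{n-1}}\,\phi^\beta\in\mathrm{Hom}_\mathbb{C}(H,\mathbb{C}),
\end{equation*}
regarded as a linear functional on $H$. The claim to prove is that $w(\phi_{\alpha^\pprime})=0$ for all $\alpha^\pprime\in J$.

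First specialize the hypothesis to $\alpha_1=\alpha^\pprime$ (any $\alpha^\pprime\in J$) and $\alpha_i=\alpha_{i-1}^\prime$ for $i\geq 2$. By \eqref{eq:simplifyactionmatrix}, $\act(\gamma)_{\alpha_i^\prime}^{\beta_i}=\delta_{\alpha_i^\prime}^{\beta_i}$, so the sums over $\beta_2,\ldots,\beta_n$ collapse. The hypothesis becomes
\begin{equation*}
    \act(\gamma)^{\beta}_{\alpha^\pprime}\,c_{\beta,s;\alpha_1^\prime,s_1;\cdots}=c_{\alpha^\pprime,s;\alpha_1^\prime,s_1;\cdots}.
\end{equation*}
Again by \eqref{eq:simplifyactionmatrix}, $\act(\gamma)^{\beta^\prime}_{\alpha^\pprime}=0$, so only $\beta=\beta^\pprime\in J$ contributes. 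In other words, $w(\gamma.\phi_{\alpha^\pprime})=w(\phi_{\alpha^\pprime})$ for every $\gamma\in\Gamma$. Hence the restriction $w|_{H^\mov}$ is a $\Gamma$-invariant linear functional on $H^\mov$.

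Next average over the group. For $e\in H^\mov$,
\begin{equation*}
    w(e)=w\Bigl(\frac{1}{|\Gamma|}\sum_{\gamma\in\Gamma}\gamma.e\Bigr).
\end{equation*}
The vector $\frac{1}{|\Gamma|}\sum_{\gamma}\gamma.e$ is $\Gamma$-fixed, so it lies in $H^\Gamma$. It also lies in $H^\mov$, since $H^\mov$ is a subrepresentation. By \eqref{eq:decom}, $H^\Gamma\cap H^\mov=0$, so the averaged vector is zero and $w(e)=0$. Taking $e=\phi_{\alpha^\pprime}$ gives the claim.

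The only delicate point is $H^\Gamma\cap H^\mov=0$. If an element $e$ of $H^\mov$ were fixed, it would span a trivial subrepresentation of $H^\mov$. Since $H^\mov$ is a direct sum of nontrivial irreducible representations, by complete reducibility and the uniqueness of isotypic components it has no trivial isotypic component, so $e=0$. Everything else is index bookkeeping, so I do not expect a real obstacle.
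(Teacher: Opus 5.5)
Your proof is correct and is essentially the paper's argument. Both specialize the invariance relation to one index in $J$ and the remaining indices in $I$, average over $\Gamma$, and use $H^\Gamma\cap H^\mov=\{0\}$ to get $\frac{1}{|\Gamma|}\sum_{\gamma\in\Gamma}\gamma.\phi_{\alpha^\pprime}=0$. Packaging the coefficients as the functional $w$ only rewrites the paper's step $\bigl(\frac{1}{|\Gamma|}\sum_{\gamma\in\Gamma}\act(\gamma)^\beta_{\alpha^\pprime}\bigr)c_{\beta,s;\alpha_1^\prime,s_1;\cdots;\alpha_{n-1}^\prime,s_{n-1}}=0$.
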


\begin{proof}
    Note that for all $\alpha^\pprime\in J$, we have
    \begin{equation*}
        \frac{1}{|\Gamma|}\sum_{\gamma\in\Gamma}\gamma.\phi_{\alpha^\pprime}=\frac{1}{|\Gamma|}\sum_{\gamma\in\Gamma}\act(\gamma)_{\alpha^\pprime}^\beta\phi_\beta\in H^\Gamma\cap H^\mov=\{0\}.
    \end{equation*}
    This implies that for all $1\leq\beta\leq N$, we have
    \begin{equation*}
        \frac{1}{|\Gamma|}\sum_{\gamma\in\Gamma}\act(\gamma)_{\alpha^\pprime}^\beta=0.
    \end{equation*}
    Hence, 
    \begin{align*}
        &c_{\alpha^\pprime,s;\alpha_1^\prime,s_1;\cdots;\alpha_{n-1}^\prime,s_{n-1}}=\frac{1}{|\Gamma|}\sum_{\gamma\in\Gamma}\act(\gamma)_{\alpha^\pprime}^\beta\act(\gamma)_{\alpha^\prime_1}^{\beta_1}\cdots\act(\gamma)_{\alpha^\prime_{n-1}}^{\beta_{n-1}}c_{\beta,s;\beta_1,s_1;\cdots;\beta_{n-1},s_{n-1}}\\
        =&\Bigl(\frac{1}{|\Gamma|}\sum_{\gamma\in\Gamma}\act(\gamma)_{\alpha^\pprime}^\beta \Bigr)c_{\beta,s;\alpha_1^\prime,s_1;\cdots;\alpha_{n-1}^\prime,s_{n-1}}=0.
    \end{align*}
    The lemma is proved.
\end{proof}

As an application, we can show that the property \eqref{eq:main} is invariant under a Miura-type transformation of the 2nd kind between linearized coordinate systems.
\begin{lem}
    Let 
    \begin{equation*}
        v^\alpha\mapsto \widetilde{v}^\alpha=v^\alpha+\sum_{d\geq 1}\widetilde{f}_d^\alpha(v),\quad\widetilde{f}_d^\alpha\in\hat{\mathscr{A}}^0_{v,d}
    \end{equation*}
    be a Miura-type transformation of the 2nd kind relating two $\Gamma$-linearized coordinate systems $(v^{\alpha,s};\rho^s_\alpha)$ and $(\widetilde{v}^{\alpha,s};\widetilde{\rho}^s_\alpha)$, and let $Q\in\hat{\mathscr{A}}$ be a differential polynomial satisfying \eqref{eq:main} in the coordinate system $(v^{\alpha,s};\rho^s_\alpha)$. Then $Q$ also satisfies \eqref{eq:main} in the coordinate system $(\widetilde{v}^{\alpha,s};\widetilde{\rho}^s_\alpha)$. 
    \label{lem:trickapply}
\end{lem}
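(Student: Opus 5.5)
The plan is a chain-rule computation. We express the partial derivatives of $Q$ in the new coordinates through those in the old ones. Then Lemma~\ref{lem:trick} kills the coefficients that mix a moving index with fixed indices after restricting to the fixed locus.

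First we write out the chain rule. In terms of the old coordinates,
\begin{equation*}
\frac{\partial Q}{\partial \widetilde{v}^{\alpha^\pprime,s}}=\sum_{t}\frac{\partial v^{\beta,t}}{\partial\widetilde{v}^{\alpha^\pprime,s}}\frac{\partial Q}{\partial v^{\beta,t}}+\sum_t\frac{\partial \rho^t_\beta}{\partial\widetilde{v}^{\alpha^\pprime,s}}\frac{\partial Q}{\partial \rho_\beta^t},
\end{equation*}
and similarly for $\partial Q/\partial\widetilde{\rho}^s_{\alpha^\pprime}$. Here only $\rho^t_\beta$ depends on $\widetilde{\rho}$, linearly, with coefficients $\partial\rho_\beta^t/\partial\widetilde{\rho}^s_{\alpha^\pprime}$. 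Applying $\iota^*$ and splitting $\beta$ into $\beta^\prime\in I$ and $\beta^\pprime\in J$, the $J$-terms vanish by \eqref{eq:main} in the old coordinates. Note that by Remark~\ref{rmk:inducedmiura}, $\iota^*$ commutes with the coordinate change, so $\iota^*$ of a product is the product of the $\iota^*$'s. It therefore suffices to show
\begin{equation*}
\iota^*\Bigl(\frac{\partial v^{\beta^\prime,t}}{\partial\widetilde{v}^{\alpha^\pprime,s}}\Bigr)=\iota^*\Bigl(\frac{\partial \rho^t_{\beta^\prime}}{\partial\widetilde{v}^{\alpha^\pprime,s}}\Bigr)=\iota^*\Bigl(\frac{\partial \rho^t_{\beta^\prime}}{\partial\widetilde{\rho}^{s}_{\alpha^\pprime}}\Bigr)=0
\end{equation*}
for $\beta^\prime\in I$ and $\alpha^\pprime\in J$.

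Next we establish these vanishings, working in the $\widetilde{v}$ coordinates.
\begin{enumerate}[label=(\roman*)]
\item \emph{The even derivative.} Write $v^{\beta^\prime}=\widetilde{v}^{\beta^\prime}+\sum_d f^{\beta^\prime}_d(\widetilde{v})$, using the inverse transformation. Since $\widetilde{v}$ is linearized and the inverse is linearized, \eqref{eq:linmiura} and \eqref{eq:simplifyactionmatrix} show that $g:=\sum_d f^{\beta^\prime}_d$ is $\Gamma$-invariant. Expand $g$ as a power series in the variables $\widetilde{v}^{\alpha,s}$. Invariance says that each Taylor coefficient tensor $c_{\alpha_1,s_1;\dots;\alpha_n,s_n}$ (of $\partial^n g$ at $0$) is invariant under $\act(\gamma)^{\otimes n}$. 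Here we use that $\widetilde{v}^{\alpha,s}$ transforms by $\act(\gamma^{-1})$, so the coefficients transform by $\act(\gamma)$.
\item \emph{Applying Lemma~\ref{lem:trick}.} The quantity $\iota^*(\partial g/\partial\widetilde{v}^{\alpha^\pprime,s})$ is the series whose coefficients are $c_{\alpha^\pprime,s;\alpha_1^\prime,s_1;\dots}$, with all other indices in $I$. These vanish by Lemma~\ref{lem:trick}. The linear part contributes $\delta^{\beta^\prime}_{\alpha^\pprime}=0$.
\item \emph{The odd derivatives.} Here $\rho^t_{\beta^\prime}$ is a sum of terms $\partial^{k}\bigl(h\,\widetilde{\rho}_\mu\bigr)$ with $h$ a derivative of $\widetilde{v}^\mu(v)$, so it is linear in $\widetilde{\rho}$. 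By the transformation laws in the proof of Lemma~\ref{lem:linearizedmiura}, $\rho_{\beta^\prime}$ is invariant. We expand it in monomials (variables $\widetilde{v}^{\alpha,s}$ and $\widetilde{\rho}^s_\alpha$), with coefficients forming tensors with the appropriate upper and lower indices. We convert to all-lower invariance using that $\widetilde{\rho}$ transforms by $\act(\gamma)$ and $\widetilde{v}$ by $\act(\gamma^{-1})$.
\end{enumerate}

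The last step, which I expect to be the main obstacle, is to check that Lemma~\ref{lem:trick} still applies in this mixed situation. The lemma as stated only needs invariance of a coefficient family under a product of $\act(\gamma)$'s in each lower index, together with the averaging identity $\frac{1}{|\Gamma|}\sum_\gamma\act(\gamma)^\beta_{\alpha^\pprime}=0$. For the contragredient slots, I would instead use $\frac{1}{|\Gamma|}\sum_\gamma\act(\gamma^{-1})^{\alpha^\pprime}_{\beta}=0$ for $\beta$ fixed in the appropriate sense, which holds because $\act(\gamma^{-1})$ preserves the decomposition \eqref{eq:decom} by \eqref{eq:simplifyactionmatrix}. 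With that, the same averaging argument kills every coefficient with exactly one index in $J$ and the rest in $I$. This gives the three vanishings above and hence \eqref{eq:main} in the $\widetilde{v}$ coordinates.
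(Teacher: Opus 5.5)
Your proposal is correct and follows essentially the paper's route: the chain rule, plus Lemma~\ref{lem:trick} applied to the Taylor coefficients of the $\Gamma$-invariant functions $v^{\beta^\prime,t}$ expanded in the $\widetilde{v}$-coordinates. Run that step for every $t$, not only $t=0$; this works because $v^{\beta^\prime,t}=\partial^t v^{\beta^\prime}$ is also invariant. Your treatment of the odd variables, using the mixed-variance averaging argument for the invariant $\rho^t_{\beta^\prime}$, is more explicit than the paper's: its displayed chain rule omits the $\widetilde{v}$-dependence of $\rho$, and it leaves the $\widetilde{\rho}$-half to ``similarly''.
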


\begin{proof}
    For a fixed pair $(\beta^\prime,t)\in I\times\mathbb{Z}_{\geq 0}$, we can write $v^{\beta^\prime,t}$ as a formal power series in $\widetilde{v}^{\alpha,s}$ as follows
    \begin{equation*}
        v^{\beta^\prime,t}=\sum_{n\geq 0}\frac{1}{n!}\sum_{s_1,\ldots,s_n\geq 0} c^{\beta^\prime,t}_{\alpha_1,s_1;\cdots;\alpha_n,s_n}\widetilde{v}^{\alpha_1,s_1}\cdots \widetilde{v}^{\alpha_n,s_n}.
    \end{equation*}
    These coefficients are uniquely determined if we required that each $c^{\beta,t}_{\alpha_1,s_1;\cdots;\alpha_n,s_n}$ is invariant under permutations of the pairs $(\alpha_i,s_i)$. Since $(v^{\alpha,s};\rho_\alpha^s)$ and $(\widetilde{v}^{\alpha,s};\widetilde{\rho}_\alpha^s)$ are both $\Gamma$-linearized coordinate systems, the uniqueness implies that, for each fixed $n\in\mathbb{Z}_{\geq 1}$, the collection $\{c^{\beta^\prime,t}_{\alpha_1,s_1;\cdots;\alpha_n,s_n}\}$ satisfies the assumption of Lemma~\ref{lem:trick}. In particular, we have
    \begin{equation*}
        \iota^*\left(\frac{\partial v^{\beta^\prime,t}}{\partial \widetilde{v}^{\alpha^\pprime,s}}\right)=\sum_{n\geq 0}\frac{1}{n !}\sum_{s_1,\ldots,s_n\geq 0} c^{\beta^\prime,t}_{\alpha^\pprime,s;\alpha_1^\prime,s_1;\cdots;\alpha_n^\prime,s_n}\widetilde{v}^{\alpha^\prime_1,s_1}\cdots \widetilde{v}^{\alpha^\prime_n,s_n}=0,
    \end{equation*}
    and therefore
    \begin{equation*}
        \iota^*\left(\frac{\partial Q}{\partial \widetilde{v}^{\alpha^\pprime,s}}\right)=\sum_{t\geq 0}\iota^*\left(\frac{\partial Q}{\partial v^{\beta^\prime,t}}\right)\iota^*\left(\frac{\partial v^{\beta^\prime,t}}{\partial \widetilde{v}^{\alpha^\pprime,s}}\right)+\iota^*\left(\frac{\partial Q}{\partial v^{\beta^\pprime,t}}\right)\iota^*\left(\frac{\partial v^{\beta^\pprime,t}}{\partial \widetilde{v}^{\alpha^\pprime,s}}\right)=0.
    \end{equation*}
    The remaining half of the condition \eqref{eq:main} can be proved similarly.
\end{proof}

\section{CohFT and integrable hierarchies}
\label{sec:CohFT}
\subsection{The Moduli space of stable curves}
Let $\overline{\mathcal{M}}_{g,n}$ be the moduli space of genus $g$ stable curves with $n$ marked points. It is well-defined only in the stable range $2g-2+n>0$ and it is a smooth Deligne-Mumford stack over $\mathbb{C}$ of complex dimension $3g-3+n$. Define $\deg\colon H^*\left(\overline{\mathcal{M}}_{g,n},\mathbb{C}\right)\rightarrow H^*\left(\overline{\mathcal{M}}_{g,n},\mathbb{C}\right)$ to be the $\mathbb{C}$-linear operator which acts on $H^i\left(\overline{\mathcal{M}}_{g,n},\mathbb{C}\right)$ by multiplication by $\frac{i}{2}$.

The moduli space $\overline{\mathcal{M}}_{g,n}$ has a universal family given by forgetting the last marked point
\begin{equation}
    \pi\colon\overline{\mathcal{M}}_{g,n+1}\rightarrow\overline{\mathcal{M}}_{g,n}.
    \label{eq:unifam}
\end{equation}
This universal family has $n$ universal sections
\begin{equation*}
    \sigma_i\colon\overline{\mathcal{M}}_{g,n}\rightarrow\overline{\mathcal{M}}_{g,n+1}
\end{equation*}
given by attaching a $\mathbb{P}^1$-bubble to the $i$-th marked point and introducing two extra marked points labeled by $i$ and $n+1$ on the smooth locus of this $\mathbb{P}^1$-bubble. Let $\omega_\pi$ be the relative dualizing sheaf of the universal family \eqref{eq:unifam}. There are many tautologically constructed cohomology classes associated with it. Define the $\psi$-classes by
\begin{equation}
    \psi_i=c_1(\sigma_i^*(\omega_\pi))\in H^2\left(\overline{\mathcal{M}}_{g,n},\mathbb{C}\right),\quad1\leq i\leq n,
    \label{eq:psiclass}
\end{equation}
and define the $\kappa$-classes by
\begin{equation*}
    \kappa_a=\pi_*(\psi_{n+1}^{a+1})\in H^{2a}\left(\overline{\mathcal{M}}_{g,n},\mathbb{C}\right),\quad a\geq 0.
\end{equation*}
It is known that $\pi_*\omega_\pi$ is a locally free sheaf on $\overline{\mathcal{M}}_{g,n}$, called the Hodge bundle. Define the $\lambda$-classes by
\begin{equation*}
    \lambda_i=c_i(\pi_*\omega_\pi)\in H^{2i}\left(\overline{\mathcal{M}}_{g,n},\mathbb{C}\right).
\end{equation*}

For a stable graph $G$ with vertex set $G_V$, edge set $E_V$, genus function $g\colon G_V\rightarrow\mathbb{Z}_{\geq 0}$, and valence function $n\colon G_V\rightarrow\mathbb{Z}_{\geq 0}$, there is a canonical clutching morphism
\begin{equation}
    \xi_G\colon\prod_{i\in G_V}\overline{\mathcal{M}}_{g(i),n(i)}\rightarrow\overline{\mathcal{M}}_{g(G),n(G)}.
    \label{eq:clutch}
\end{equation}
Here the genus of $G$ is defined to be
\begin{equation*}
    g(G)=\sum_{i\in G_V}g(i)+h^1(G),
\end{equation*}
and the number of legs of $G$ is defined to be
\begin{equation*}
    n(G)=\sum_{i\in G_V}n(i)-2|E_V|.
\end{equation*}
In particular, given nonnegative integers $g_1,g_2,n_1,n_2$ satisfying
\begin{equation*}
    2g_i-1+n_i>0,\quad i=1,2,
\end{equation*}
one can define a stable graph with two vertices labeled by $\{1,2\}$, one edge, genus function $g(i)=g_i$, valence function $n(i)=n_i+1$. Denote $g=g_1+g_2$ and $n=n_1+n_2$. The corresponding clutching morphism is the \textit{gluing tree} morphism
\begin{equation}
    \xi_{tree}\colon\overline{\mathcal{M}}_{g_1,n_1+1}\times\overline{\mathcal{M}}_{g_2,n_2+1}\rightarrow\overline{\mathcal{M}}_{g,n}.
    \label{eq:gltree}
\end{equation}
obtained by gluing the last marked point of the first stable curve to the first marked point of the second one. Given a positive integer $g$ and a nonnegative integer $n$ satisfying
\begin{equation*}
    2g-2+n>0,
\end{equation*}
one can define a stable graph with one vertex, one edge, whose genus function and valence function are given by $g$ and $n$ respectively. The clutching morphism is the \textit{gluing loop} morphism
\begin{equation}
    \xi_{loop}\colon\overline{\mathcal{M}}_{g-1,n+2}\rightarrow\overline{\mathcal{M}}_{g,n}.
    \label{eq:glloop}
\end{equation}
given by gluing the last two marked points of a stable curve. A general clutching morphism $\xi_G$ is a composition of gluing trees and gluing loops.

There is a subring $RH^*\left(\overline{\mathcal{M}}_{g,n}\right)\subset H^*\left(\overline{\mathcal{M}}_{g,n},\mathbb{C}\right)$ called the tautological (cohomology) ring, whose elements are called tautological classes. The collection $\{RH^*\left(\overline{\mathcal{M}}_{g,n}\right)\}$ is defined to be the smallest $\mathbb{C}$-subalgebras containing all $\psi$-classes, closed under the pushforward along \eqref{eq:unifam}, \eqref{eq:gltree} and \eqref{eq:glloop}. It contains many tautologically constructed cohomology classes. For example, the $\psi$-classes, the $\kappa$-classes, and the $\lambda$-classes are tautological. In general, tautological classes are $\mathbb{C}$-linear combination of classes of the form
\begin{equation}
    \xi_G\left(\prod_{i\in G_V}A_i\right)
    \label{eq:taubasis}
\end{equation}
where $A_i$ is a monomial of $\psi$-classes and $\kappa$-classes on $\overline{\mathcal{M}}_{g(i),n(i)}$. We refer the readers to \cite{FP1} for an introduction to the theory of stable graphs and the tautological ring.

\subsection{Homogeneous cohomological field theory}
From now on, we fix a symmetric nondegenerate bilinear form $\eta\colon H\times H\rightarrow\mathbb{C}$, a $\mathbb{C}$-gradation $|\cdot|$ on $H$, and a distinguished element $\phi_1\in H\setminus\{0\}$ with $|\phi_1|=0$. We extend $\phi_1$ to a homogeneous basis $\{\phi_\alpha\}$. Let $\eta_{\alpha\beta}=\eta(\phi_\alpha,\phi_\beta)$ and $(\eta^{\alpha\beta})=(\eta_{\alpha\beta})^{-1}$.
\begin{defn}[\cite{KM}]
    A homogeneous cohomological field theory, or CohFT for short, is a collection of $\mathbb{C}$-linear maps
    \begin{equation}
        \Lambda_{g,n}\colon H^{\otimes n}\rightarrow H^*\left(\overline{\mathcal{M}}_{g,n},\mathbb{C}\right),\quad 2g-2+n>0
        \label{eq:cohftmap}
    \end{equation}
    satisfying the following properties:
    \begin{enumerate}[label=$(C\arabic*)$,ref=$(C\arabic*)$]
        \item \label{item:equi}(Equivariance) The map $\Lambda_{g,n}$ is equivariant under the symmetric group $\mathfrak{S}_n$. Here $\mathfrak{S}_n$ acts on $H^{\otimes n}$ by permuting copies of $H$ and on $H^*\left(\overline{\mathcal{M}}_{g,n},\mathbb{C}\right)$ by permuting marked points.
        \item \label{item:id}(Identity) For all $e_i\in H$, we have
        \begin{align*}
            &\Lambda_{0,3}(e_1\otimes e_2\otimes \phi_1)=\eta(e_1,e_2),\\
            &\Lambda_{g,n+1}(e_1\otimes\cdots\otimes e_n\otimes\phi_1)=\pi^*\Lambda_{g,n}(e_1\otimes\cdots\otimes e_n).
        \end{align*}
        Here we identify $H^*\left(\overline{\mathcal{M}}_{0,3},\mathbb{C}\right)$ with $\mathbb{C}$ via the canonical isomorphism $\overline{\mathcal{M}}_{0,3}\cong\Spec\mathbb{C}$.
        \item \label{item:gltree}(Gluing trees) Let $g=g_1+g_2,n=n_1+n_2$. For all $e_i\in H$, we have
        \begin{equation*}
            \xi_{tree}^*\Lambda_{g,n}(e_1\otimes\cdots\otimes e_n)=\Lambda_{g_1,n_1+1}(e_1,\otimes\cdots\otimes e_{n_1},\phi_\alpha)\otimes\eta^{\alpha\beta}\Lambda_{g_2,n_2+1}(\phi_\beta\otimes e_{n_1+1}\otimes\cdots\otimes e_{n}).
        \end{equation*}
        Here we use the Ku\"nneth isomorphism 
        \begin{equation*}
            H^*\left(\overline{\mathcal{M}}_{g_1,n_1}\times\overline{\mathcal{M}}_{g_2,n_2},\mathbb{C}\right)\cong H^*\left(\overline{\mathcal{M}}_{g_1,n_1},\mathbb{C}\right)\otimes H^*\left(\overline{\mathcal{M}}_{g_2,n_2},\mathbb{C}\right).
        \end{equation*}
        \item \label{item:glloop}(Gluing loops) We have
        \begin{equation*}
            \xi_{loop}^*\Lambda_{g,n}(e_1\otimes\cdots\otimes e_n)=\Lambda_{g-1,n+2}(e_1\otimes\cdots\otimes e_n\otimes\phi_\alpha\otimes\phi_\beta)\eta^{\alpha\beta}
        \end{equation*}
        for all $e_i\in H$.
        \item \label{item:homo}(Homogeneity) There exist $d,r^\alpha\in\mathbb{C}$ such that
        \begin{equation*}
            \pi_*\Lambda_{g,n+1}(e_1\otimes\cdots\otimes e_n\otimes r^\beta\phi_\beta)=\bigl(d(g-1)+\sum_{i=1}^n |e_i|-\deg\bigr)\Lambda_{g,n}(e_1\otimes\cdots\otimes e_n)
        \end{equation*}
        for all homogeneous $e_i\in H$.
    \end{enumerate}
\end{defn}

\begin{rmk}
    Comparing to the original definition given in \cite{KM}, we require additionally the axioms \ref{item:id}, \ref{item:homo}.
\end{rmk}

Given a CohFT $\{\Lambda_{g,n}\}$, we can define the \textit{quantum product} on $H$ by
\begin{equation*}
    \phi_\alpha*\phi_\beta=\eta^{\mu\nu}\Lambda_{0,3}(\phi_\alpha\otimes\phi_\beta\otimes\phi_\mu)\phi_\nu.
\end{equation*}
The axioms \ref{item:equi} and \ref{item:gltree} imply the commutativity and associativity of the quantum product respectively.

Let $H^{(p)}$ be the formal completion of $H$ at the origin. Whenever we choose a basis $\{\phi_\alpha\}$ of $H$, we have the following identifications of topological rings:
\begin{equation*}
    \mathcal{O}_{H^{(p)}}=\mathbb{C}[[t^{\alpha,p}\mid 1\leq\alpha\leq N]].
\end{equation*}
We also use the convention $t^\alpha=t^{\alpha,0}$. Define $\mathcal{O}_{H^\infty}$ to be the completion of the ring \cite{CH}
\begin{equation*}
    \mathbb{C}[[\varepsilon]][t^{\alpha,p}\mid 1\leq\alpha\leq N,\,p\geq 0]
\end{equation*}
with respect to the valuation
\begin{equation}
    v(t^{\alpha,p})=p+1,\quad 1\leq\alpha\leq N,\,p\geq 0.
    \label{eq:valuation}
\end{equation}
As a ring, it is isomorphic to the formal power series ring in variables $\varepsilon,\,t^{\alpha,p}$:
\begin{equation*}
    \mathcal{O}_{H^\infty}=\mathbb{C}[[t^{\alpha,p}\mid 1\leq\alpha\leq N,\,p\geq 0]].
\end{equation*}
Define the \textit{big phase space} $H^\infty$ to be the formal scheme whose underlying topological space is a single point and whose coordinate ring is $\mathcal{O}_{H^\infty}$.

Define the (ancestor) correlators
\begin{equation*}
    \langle\tau_{p_1}(e_1)\dots\tau_{p_n}(e_n)\rangle_{g}=\int_{\overline{\mathcal{M}}_{g,n}}\Lambda_{g,n}(e_1\otimes\cdots\otimes e_n)\psi_1^{p_1}\cdots\psi_n^{p_n},\quad e_i\in H,\,p_i\in\mathbb{Z}_{\geq 0}.
\end{equation*}
and the following generating series
\begin{equation*}
    \mathcal{F}=\sum_{g\geq 0}\varepsilon^{2g}\mathcal{F}_g=\sum_{\substack{g,n\geq 0\\2g-2+n>0}}\frac{\varepsilon^{2g}}{n!}\sum_{p_1,\ldots,p_n\in\mathbb{Z}_{\geq 0}}\langle\tau_{p_1}(\phi_{\alpha_1})\dots\tau_{p_n}(\phi_{\alpha_n})\rangle_{g}t^{\alpha_1,p_1}\cdots t^{\alpha_n,p_n}.
\end{equation*}
Here we adopt the convention in \cite{BDGR1} and write $\mathcal{F}=\sum_{g\geq 0}\varepsilon^{2g}\mathcal{F}_g$ instead of the standard convention $\mathcal{F}=\sum_{g\geq 0}\varepsilon^{2g-2}\mathcal{F}_g$. These series $\mathcal{F},\mathcal{F}_g$ are functions on the big phase space $H^\infty$. Consider the following series
\begin{equation*}
    F(t^1,\ldots,t^N)=\mathcal{F}_0\Big\vert_{t^{\bullet,>0}\rightarrow 0}\in H^{(0)}.
\end{equation*}
It defines a (formal) Frobenius manifold with charge $d$ and Euler vector field
\begin{equation}
    E=\sum_{\alpha}\bigl((1-|\phi_\alpha|)t^\alpha+r^\alpha\bigr)\frac{\partial}{\partial t^\alpha}.
\end{equation}
For a detailed introduction to the theory of Frobenius manifolds, the readers may refer to \cite{Dub}.

Let $U$ be a sufficiently small neighborhood of $0\in H$ in the complex analytic topology and $s=s^\alpha\phi_\alpha\in U$. Since $H^*\left(\overline{\mathcal{M}}_{g,n},\mathbb{C}\right)$ is a finite dimensional $\mathbb{C}$-vector space, it can be identified with  $\mathbb{C}^{\dim_{\mathbb{C}}H^*\left(\overline{\mathcal{M}}_{g,n},\mathbb{C}\right)}$ as topological spaces. Suppose that
    \begin{equation*}
        \Lambda^s_{g,n}(e_1\otimes\cdots\otimes e_n)\colon=\sum_{m\geq 0}\frac{1}{m!}(\pi_m)_*\Lambda_{g,n+m}(e_1\otimes\cdots\otimes e_n\otimes s^{\alpha_1}\phi_{\alpha_1}\otimes\cdots\otimes s^{\alpha_m}\phi_{\alpha_m})
    \end{equation*}
is convergent in $U$. Here $\pi_m\colon\overline{\mathcal{M}}_{g,n+m}\rightarrow\overline{\mathcal{M}}_{g,n}$ is defined by forgetting the last $m$ marked points of a stable curve. These maps $\{\Lambda_{g,n}^s\}$ define a family of CohFT with $r^\alpha$ replaced by $r^\alpha+s^\alpha(1-|\phi_\alpha|)$ (see e.g. \cite{PPZ}).
\begin{defn}
    We call $s\in U$ a semisimple point of $\{\Lambda_{g,n}\}$ if the $\mathbb{C}$-algebra $H$, equipped with the quantum product associated with $\{\Lambda_{g,n}^s\}$, has no nonzero nilpotent elements, and say that $\{\Lambda_{g,n}\}$ is semisimple if the semisimple points form an open dense subset of $U$. 
\end{defn}

\subsection{CohFT with a numerical finite symmetry}
Note that integrable hierarchies encode numerical information of CohFTs, rather than cycle level data. To study the reduction property of integrable hierarchies, it suffices to impose conditions at the numerical level.

\begin{defn}
    A CohFT is said to have numerical symmetry $\Gamma$ if it is equipped with a $\Gamma$-action on $H$ satisfying
        \begin{enumerate}[label=$\arabic*.$,ref=$\arabic*$]
        \item The $\Gamma$-action preserves the gradation and the distinguished element $\phi_1$,
        \item For all $e_i\in H,\,\gamma\in\Gamma,\,A\in RH^*\left(\overline{\mathcal{M}}_{g,n}\right)$, we have
            \begin{equation}
                \int_{\overline{\mathcal{M}}_{g,n}}\Lambda_{g,n}(\gamma.e_1\otimes\cdots\otimes\gamma.e_n)A=\int_{\overline{\mathcal{M}}_{g,n}}\Lambda_{g,n}(e_1\otimes\cdots\otimes e_n)A.
                \label{eq:nsym}
            \end{equation}
        \item We have
                \begin{equation}
                    r^{\alpha^\pprime}=0,\quad\forall\alpha^\pprime\in J.
                    \label{eq:deg}
                \end{equation}
            Here $r^\al$ are defined by the homogeneity axiom \ref{item:homo}.
    \end{enumerate}   
\end{defn}
Let $\{\Lambda_{g,n}\}$ be a CohFT with numerical symmetry $\Gamma$. We can extend the $\Gamma$-action to $H^{\otimes n}$ by
\begin{equation*}
    \gamma.(e_1\otimes\cdots\otimes e_n)\colon=\gamma.e_1\otimes\cdots\otimes\gamma.e_n.
\end{equation*}
In analogy to Proposition 2.6 in \cite{LRZ}, we have the following lemma.
\begin{lem}
     The metric $\eta$ is $\Gamma$-invariant, the decomposition \eqref{eq:decom} is orthogonal with respect to $\eta$, and moreover, for every $e_i\in H^\Gamma,\,e\in H^\mov,\,A\in RH^*\left(\overline{\mathcal{M}}_{g,n}\right)$, we have
    \begin{equation*}
        \int_{\overline{\mathcal{M}}_{g,n+1}}\Lambda_{g,n}(e_1\otimes\cdots\otimes e_n\otimes e)A=0.
    \end{equation*}
    \label{lem:equi2vanish}
\end{lem}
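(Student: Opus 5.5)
The plan is to reduce all three assertions to Lemma~\ref{lem:trick}. Each quantity involved (the metric, and the correlators against a fixed tautological class $A$) is a multilinear form on $H$, and the numerical symmetry \eqref{eq:nsym} says exactly that the coefficient array of such a form is invariant under the tensor action of $\Gamma$.

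\emph{Step 1: $\Gamma$-invariance of $\eta$.} By the identity axiom \ref{item:id}, $\eta(e_1,e_2)=\Lambda_{0,3}(e_1\otimes e_2\otimes\phi_1)$, which equals $\int_{\overline{\mathcal{M}}_{0,3}}\Lambda_{0,3}(e_1\otimes e_2\otimes\phi_1)\cdot 1$. Take $A=1\in RH^*(\overline{\mathcal{M}}_{0,3})$ in \eqref{eq:nsym}. Since the $\Gamma$-action fixes $\phi_1$, we have $\gamma.\phi_1=\phi_1$. Therefore
\begin{equation*}
\eta(\gamma.e_1,\gamma.e_2)=\int_{\overline{\mathcal{M}}_{0,3}}\Lambda_{0,3}(\gamma.e_1\otimes\gamma.e_2\otimes\gamma.\phi_1)=\eta(e_1,e_2).
\end{equation*}

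\emph{Step 2: orthogonality of \eqref{eq:decom}.} Set $c_{\alpha,0;\beta,0}=\eta_{\alpha\beta}$; the indices $s_i$ play no role and are fixed to $0$. Step 1 gives $\act(\gamma)^{\mu}_{\alpha}\act(\gamma)^{\nu}_{\beta}\eta_{\mu\nu}=\eta_{\alpha\beta}$ for all $\gamma$, which is precisely the hypothesis of Lemma~\ref{lem:trick} with $n=2$. The lemma then gives $\eta_{\alpha^\pprime\beta^\prime}=0$ for all $\alpha^\pprime\in J$ and $\beta^\prime\in I$. By symmetry of $\eta$, the spaces $H^\Gamma$ and $H^\mov$ are orthogonal.

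\emph{Step 3: the vanishing.} Fix $g$, $n$ and $A\in RH^*(\overline{\mathcal{M}}_{g,n+1})$; the statement's $\Lambda_{g,n}$ should be read as $\Lambda_{g,n+1}$. Define
\begin{equation*}
c_{\alpha_1,0;\cdots;\alpha_{n+1},0}=\int_{\overline{\mathcal{M}}_{g,n+1}}\Lambda_{g,n+1}(\phi_{\alpha_1}\otimes\cdots\otimes\phi_{\alpha_{n+1}})A.
\end{equation*}
Expanding $\gamma.\phi_{\alpha_i}=\act(\gamma)^{\beta_i}_{\alpha_i}\phi_{\beta_i}$ by multilinearity, \eqref{eq:nsym} becomes exactly the invariance hypothesis of Lemma~\ref{lem:trick}.

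That lemma is stated with the distinguished moving index in the first slot, whereas here the moving vector $e$ sits in the last slot. There are two ways to handle this.
\begin{enumerate}[label=(\roman*)]
\item Apply the lemma to the reindexed array $c'_{\alpha_{n+1};\alpha_1;\cdots;\alpha_n}=c_{\alpha_1;\cdots;\alpha_{n+1}}$. A simultaneous permutation of slots preserves the invariance condition, since every slot is transformed by the same matrix $\act(\gamma)$.
\item Alternatively, repeat the averaging argument in the proof of Lemma~\ref{lem:trick} directly on the last slot.
\end{enumerate}
Either way, we obtain $c_{\alpha_1^\prime;\cdots;\alpha_n^\prime;\alpha^\pprime}=0$. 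The claim for general $e_i\in H^\Gamma$ and $e\in H^\mov$ then follows by multilinearity, expanding in the bases $\{\phi_{\alpha^\prime}\}$ and $\{\phi_{\alpha^\pprime}\}$.

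There is no serious obstacle in this argument. The only points needing care are, first, checking that \eqref{eq:nsym} indeed yields the index form of invariance with the matrices $\act(\gamma)$ (rather than their inverses or transposes), and second, the harmless slot-reordering in Step 3.
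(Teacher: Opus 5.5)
Your proposal is correct and follows the paper's own proof, which simply defines the array of correlators $\int_{\overline{\mathcal{M}}_{g,n}}\Lambda_{g,n}(\phi_{\alpha_1}\otimes\cdots\otimes\phi_{\alpha_n})A$ and invokes Lemma~\ref{lem:trick}. Your Steps 1--2 spell out the $g=0$, $A=1$ case with a $\phi_1$ insertion (using $\gamma.\phi_1=\phi_1$) that the paper leaves implicit. Your correction of $\Lambda_{g,n}$ to $\Lambda_{g,n+1}$ and your slot-reordering remark are also right.
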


\begin{proof}
    Define
    \begin{equation*}
        c^{g,n,A}_{\alpha_1,s_1;\cdots;\alpha_n,s_n}=\int_{\overline{\mathcal{M}}_{g,n}}\Lambda_{g,n}(\phi_{\alpha_1}\otimes\cdots\otimes\phi_{\alpha_n})A.
    \end{equation*}
    This lemma then follows from Lemma~\ref{lem:trick}.
\end{proof}

We extend $\phi_1$ to a basis $\{\phi_{\alpha^\prime}\}_{\alpha^\prime\in I}$ of $H^\Gamma$ and choose a basis $\{\phi_{\alpha^\pprime}\}_{\alpha^\pprime\in J}$ for $H^\mov$. Let
\begin{equation*}
    \Lambda_{g,n}^\Gamma\colon\left(H^\Gamma\right)^{\otimes n}\rightarrow H^*\left(\overline{\mathcal{M}}_{g,n},\mathbb{C}\right)
\end{equation*}
be the restriction of $\Lambda_{g,n}$ to $\left(H^\Gamma\right)^{\otimes n}$. We can still define the spaces $H^{\Gamma,(p)}$, the big phase space $H^{\Gamma,\infty}$, its (ancestor) correlators
\begin{equation*}
    \langle\tau_{p_1}(e_1)\dots\tau_{p_n}(e_n)\rangle_{g}^\Gamma=\int_{\overline{\mathcal{M}}_{g,n}}\Lambda_{g,n}^\Gamma(e_1\otimes\ldots\otimes e_n)\psi_1^{p_1}\cdots\psi_n^{p_n},\quad e_i\in H^\Gamma,\, p_i\in\mathbb{Z}_{\geq 0},
\end{equation*}
and its generating series
\begin{equation*}
    \mathcal{F}^\Gamma=\sum_{g\geq 0}\varepsilon^{2g}\mathcal{F}_g^\Gamma=\sum_{\substack{g,n\geq 0\\2g-2+n>0}}\frac{\varepsilon^{2g}}{n!}\sum_{p_1,\ldots,p_n\in\mathbb{Z}_{\geq 0}}\langle\tau_{p_1}(\phi_{\alpha_1^\prime})\dots\tau_{p_n}(\phi_{\alpha_n^\prime})\rangle_{g}^\Gamma t^{\alpha_1^\prime,p_1}\cdots t^{\alpha_n^\prime,p_n}.
\end{equation*}
The inclusion $\iota\colon X^\Gamma\hookrightarrow X$ gives rise to a local homomorphism of $\mathbb{C}$-algebras
\begin{equation*}
    \iota^{(p),*}\colon\mathcal{O}_{H^{(p)}}\cong\mathbb{C}[[t^{\alpha,p}\mid 1\leq\alpha\leq N]]\rightarrow\mathcal{O}_{H^{\Gamma,(p)}}\cong\mathbb{C}[[t^{\alpha^\prime,p}\mid\alpha^\prime\in I]]
\end{equation*}
defined by taking quotient with respect to the variables indexed by $\alpha^\pprime\in J$, and therefore a local homomorphism $\iota^{\infty,*}\colon\mathcal{O}_{H^\infty}\rightarrow\mathcal{O}_{H^{\Gamma,\infty}}$ satisfies
\begin{equation}
    \mathcal{F}^\Gamma=\iota^{\infty,*}\mathcal{F},\quad\mathcal{F}_g^\Gamma=\iota^{\infty,*}\mathcal{F}_g.
\end{equation}
These homomorphisms are compatible with the induced $\Gamma$-actions on $\mathcal{O}_{H^{(p)}}$ and on $\mathcal{O}_{H^\infty}$, defined by
\begin{equation*}
    \gamma.t^{\alpha,p}=\act(\gamma^{-1})^\alpha_\beta t^{\beta,p},\quad\forall\gamma\in\Gamma.
\end{equation*}

Clearly, $\{\Lambda_{g,n}^\Gamma\}$ satisfies \ref{item:equi}, \ref{item:id}, \ref{item:homo}. It is shown in \cite{LRZ} that if $\{\Lambda_{g,n}\}$ is $\Gamma$-equivariant, then $\Lambda_{g,n}^\Gamma$ satisfies the gluing trees axiom \ref{item:gltree}. In our situation, the gluing trees axiom is satisfied numerically.
\begin{lem}
    Let $\{\Lambda_{g,n}\}$ be a CohFT with numerical symmetry $\Gamma$ and $g=g_1+g_2,\,n=n_1+n_2$. Denote $\mathcal{M}_i=\overline{\mathcal{M}}_{g_i,n_i+1},\,i=1,2$. Then we have
    \begin{equation}
        \begin{aligned}
            \int_{\mathcal{M}_1\times\mathcal{M}_2}&\xi_{tree}^*\big(\Lambda_{g,n}^\Gamma(e_1\otimes\cdots\otimes e_n)\big)A\\
            =\int_{\mathcal{M}_1\times\mathcal{M}_2}&\Lambda_{g_1,n_1+1}^\Gamma(e_1\otimes\cdots\otimes e_{n_1},\phi_{\alpha^\prime})\otimes\eta^{\alpha^\prime\beta^\prime}\Lambda_{g_2,n_2+1}^\Gamma(\phi_{\beta^\prime}\otimes e_{n_1+1}\otimes\cdots\otimes e_{n})A
        \end{aligned}
        \label{eq:gltree*}
    \end{equation}
    for all $e_i\in H^\Gamma,\,A\in RH^*\left(\mathcal{M}_1\right)\otimes RH^*\left(\mathcal{M}_2\right)$. In particular, $F^\Gamma=\iota^{(0),*}F$ defines a Frobenius manifold.
    \label{lem:numgltree}
\end{lem}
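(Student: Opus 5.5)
The plan is to start from the gluing trees axiom \ref{item:gltree} for the ambient CohFT and show that only the fixed part of the inserted diagonal contributes numerically. For $e_i\in H^\Gamma$, \ref{item:gltree} gives
\begin{equation*}
\xi_{tree}^*\Lambda_{g,n}(e_1\otimes\cdots\otimes e_n)=\Lambda_{g_1,n_1+1}(e_1\otimes\cdots\otimes e_{n_1}\otimes\phi_\alpha)\otimes\eta^{\alpha\beta}\Lambda_{g_2,n_2+1}(\phi_\beta\otimes e_{n_1+1}\otimes\cdots\otimes e_n).
\end{equation*}
By Lemma~\ref{lem:equi2vanish}, $\eta$ is block diagonal with respect to \eqref{eq:decom}. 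Hence $\eta^{\alpha\beta}$ splits into an $I\times I$ block and a $J\times J$ block, and the right-hand side is the sum of the $(\alpha^\prime,\beta^\prime)$-terms and the $(\alpha^\pprime,\beta^\pprime)$-terms.

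Next we pair both sides with $A$. By linearity it suffices to treat $A=A_1\otimes A_2$ with $A_i\in RH^*(\mathcal{M}_i)$. The Künneth decomposition makes the integral over $\mathcal{M}_1\times\mathcal{M}_2$ factor as
\begin{equation*}
\int_{\mathcal{M}_1}\Lambda_{g_1,n_1+1}(e_1\otimes\cdots\otimes e_{n_1}\otimes\phi_{\alpha^\pprime})A_1\cdot\eta^{\alpha^\pprime\beta^\pprime}\int_{\mathcal{M}_2}\Lambda_{g_2,n_2+1}(\phi_{\beta^\pprime}\otimes\cdots)A_2 .
\end{equation*}
In the first factor all inputs except $\phi_{\alpha^\pprime}\in H^\mov$ lie in $H^\Gamma$, and $A_1$ is tautological, so Lemma~\ref{lem:equi2vanish} makes it vanish. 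Here we use the $\mathfrak{S}_n$-equivariance \ref{item:equi} to move the moving insertion to the last slot. Permuting marked points preserves the tautological ring, so the twisted class $A_1$ remains tautological. Therefore only the $I\times I$ block survives. Since $\eta$ is block diagonal, the inverse of the restricted metric $\eta|_{H^\Gamma}$ is exactly the $I\times I$ block of $(\eta^{\alpha\beta})$, which gives \eqref{eq:gltree*}.

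For the Frobenius manifold statement, take $g=0$ and $A$ a product of $\psi$-classes, or simply $A=1$ on the boundary strata. Equation \eqref{eq:gltree*} then yields the WDVV equations for the genus-zero primary correlators of $\Lambda^\Gamma$ with metric $\eta|_{H^\Gamma}$. The standard argument applies: pull back along the boundary divisors of $\overline{\mathcal{M}}_{0,4}$, which are linearly equivalent, with insertions at a point of $H^\Gamma$ after summing over further forgotten points with fixed-sector inputs. Equivalently, WDVV for $F$, the restriction $\iota^{(0),*}$, and the vanishing of the mixed third derivatives $\partial_{\alpha^\pprime}\partial_{\beta^\prime}\partial_{\gamma^\prime}F$ on $H^\Gamma$ (the case of Lemma~\ref{lem:equi2vanish} with all other inputs fixed) together give WDVV for $F^\Gamma$.

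The flat identity $\phi_1\in H^\Gamma$ survives restriction. Quasi-homogeneity follows from \eqref{eq:deg}, since the Euler field is then tangent to $H^\Gamma$ and $\Gamma$ preserves the grading. Nondegeneracy of $\eta|_{H^\Gamma}$ follows from orthogonality. The main point requiring care is that the vanishing lemma is only numerical, so it applies only after integrating against tautological classes. This is why the argument must reduce to product tautological classes $A_1\otimes A_2$ and check that the Künneth factorization of the integral is legitimate at that level.
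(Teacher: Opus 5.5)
Your proof of \eqref{eq:gltree*} is the same as the paper's. You reduce to $A=A_1\otimes A_2$, factor the integral, and kill every term with a moving index at the node by Lemma~\ref{lem:equi2vanish}. Orthogonality then identifies the surviving block of $(\eta^{\alpha\beta})$ with the inverse of $\eta|_{H^\Gamma}$. The permutation step is unnecessary for the first factor, since $\phi_{\alpha^\pprime}$ already sits in the last slot, but it does no harm.

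For the Frobenius-manifold claim you take a different and more economical route. The paper invokes Keel's theorem $RH^*(\overline{\mathcal{M}}_{0,n})=H^*(\overline{\mathcal{M}}_{0,n},\mathbb{C})$ together with Poincar\'e duality. This upgrades \eqref{eq:gltree*} in genus zero to the genuine cohomological gluing-tree axiom for $\{\Lambda^\Gamma_{0,n}\}$, so the genus-zero part is an honest tree-level CohFT and WDVV follows.

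You extract only WDVV, in one of two ways:
\begin{enumerate}
\item from \eqref{eq:gltree*} with $A=1$, paired against pulled-back boundary divisors;
\item by restricting WDVV for the ambient $F$ and noting that the $J\times J$ block contributions vanish, because $\partial_{\alpha^\pprime}\partial_{\beta^\prime}\partial_{\gamma^\prime}F$ restricts to zero on $H^\Gamma$ by Lemma~\ref{lem:equi2vanish} with $A=1$.
\end{enumerate}
Both avoid Keel's theorem, and the second does not even need \eqref{eq:gltree*}. The cost is that you prove only the numerical genus-zero statement, not the cycle-level one the paper obtains.

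You also check the remaining axioms, which the paper leaves implicit: the flat unit, nondegeneracy of $\eta|_{H^\Gamma}$, and tangency of the Euler field via \eqref{eq:deg} and $\Gamma$-compatibility of the grading.
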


\begin{proof}
    We may assume that $A=A_1\otimes A_2$ for some $A_i\in RH^*\left(\mathcal{M}_i\right)$. By Lemma~\ref{lem:equi2vanish}, we have
    \begin{align*}
        &\int_{\mathcal{M}_1\times\mathcal{M}_2}\Lambda_{g_1,n_1+1}(e_1\otimes\cdots\otimes e_{n_1}\otimes\phi_{\alpha^\pprime})
        \otimes\eta^{\alpha^\pprime\beta}\Lambda_{g_2,n_2+1}(\phi_\beta\otimes e_{n_1+1}\otimes\cdots\otimes e_{n})A\\
        =&\,\eta^{\alpha^\pprime\beta}\int_{\mathcal{M}_1}\Lambda_{g_1,n_1+1}(e_1\otimes\cdots\otimes e_{n_1}\otimes\phi_{\alpha^\pprime})A_1\times\int_{\mathcal{M}_2}\Lambda_{g_2,n_2+1}(\phi_\beta\otimes e_{n_1+1}\otimes\cdots\otimes e_{n})A_2=0
    \end{align*}
    for any fixed $\alpha^\pprime\in J$ and fixed $1\leq\beta\leq N$. It follows that
    \begin{align*}
        &\int_{\mathcal{M}_1\times\mathcal{M}_2}\xi_{tree}^*\big(\Lambda_{g,n}^\Gamma(e_1\otimes\cdots\otimes e_n)\big)A\\
        =&\int_{\mathcal{M}_1\times\mathcal{M}_2}\Lambda_{g_1,n_1+1}(e_1\otimes\ldots\otimes e_{n_1}\otimes\phi_{\alpha})\otimes\eta^{\alpha\beta}\Lambda_{g_2,n_2+1}(\phi_{\beta}\otimes e_{n_1+1}\otimes\cdots\otimes e_{n})A\\
        =&\int_{\mathcal{M}_1\times\mathcal{M}_2}\Lambda_{g_1,n_1+1}^\Gamma(e_1\otimes\ldots\otimes e_{n_1}\otimes\phi_{\alpha^\prime})\otimes\eta^{\alpha^\prime\beta}\Lambda_{g_2,n_2+1}(\phi_{\beta}\otimes e_{n_1+1}\otimes\cdots\otimes e_{n})A\\
        =&\int_{\mathcal{M}_1\times\mathcal{M}_2}\Lambda_{g_1,n_1+1}^\Gamma(e_1\otimes\cdots\otimes e_{n_1}\otimes\phi_{\alpha^\prime})\otimes\eta^{\alpha^\prime\beta^\prime}\Lambda_{g_2,n_2+1}^\Gamma(\phi_{\beta^\prime}\otimes e_{n_1+1}\otimes\cdots\otimes e_{n})A.
    \end{align*}
    The first equality follows from the gluing tree axiom \ref{item:gltree} of $\{\Lambda_{g,n}\}$. For the second assertion, it has been proved in \cite{Kee} that
    \begin{equation*}
        RH^*\left(\overline{\mathcal{M}}_{0,n}\right)=H^*\left(\overline{\mathcal{M}}_{0,n},\mathbb{C}\right).
    \end{equation*}
    By Poincar\'e duality theorem, the equation \eqref{eq:gltree*} yields the $g=0$ gluing tree axiom \ref{item:gltree} for $\{\Lambda_{0,n}^\Gamma\}$. It implies that $F^\Gamma$ defines a Frobenius manifold.
\end{proof}

In \cite{LRZ}, the authors introduce the notion of a partial CohFT, namely, a collection $\{\Lambda_{g,n}\}$ satisfying all the CohFT axioms except \ref{item:glloop}. Motivated by the equation \eqref{eq:gltree*}, we introduce the following definition.
\begin{defn}
    A numerically partial CohFT is a collection $\{\Lambda_{g,n}\}$ satisfying \ref{item:equi}, \ref{item:id}, \ref{item:homo} and \eqref{eq:gltree*}.
    \label{def:numericalpartialCohFT}
\end{defn}
Lemma~\ref{lem:numgltree} says that the $\Gamma$-invariant locus of a CohFT with numerical symmetry $\Gamma$ is a numerically partial CohFT.

\subsection{DR hierarchy associated with a numerically partial CohFT}
Let $a_1,\ldots,a_n\in\mathbb{Z}$ be a list of integers satisfying $\sum_{i=1}^n a_i=0$. The positive  (resp. absolute value of negative) integers among them define a partition $\mu^{+}$ (resp. $\mu^{-}$) of $\frac{1}{2}\sum_{i=1}^n |a_i|$. Let $n_0$ be the number of zeros among these $a_i$'s. Denote by $\overline{\mathcal{M}}_{g,n_0}^{\sim}(\mathbb{P}^1,\mu^{+},\mu^{-})$ the moduli space of stable maps to rubber with ramification profiles specified by $\mu_A^{\pm}$ (see \cite{OP,LLZ}). It has a virtual fundamental class
\begin{equation*}
    \left[\overline{\mathcal{M}}_{g,n_0}^{\sim}(\mathbb{P}^1,\mu^{+},\mu^{-})\right]^{\text{vir}}\in H_{2(2g-3+n)}\left(\overline{\mathcal{M}}_{g,n_0}^{\sim}(\mathbb{P}^1,\mu^{+},\mu^{-}),\mathbb{C}\right).
\end{equation*}
There is a morphism given by forgetting the maps
\begin{equation*}
    \st\colon\overline{\mathcal{M}}_{g,n_0}^{\sim}(\mathbb{P}^1,\mu^{+},\mu^{-})\rightarrow\overline{\mathcal{M}}_{g,n}.
\end{equation*}
Define the double ramification (DR) cycle by
\begin{equation*}
    \DR_g(a_1,\ldots,a_n)=\st_*\left[\overline{\mathcal{M}}_{g,n_0}^{\sim}(\mathbb{P}^1,\mu^{+},\mu^{-})\right]^{\text{vir}}\in H^{2g}\left(\overline{\mathcal{M}}_{g,n},\mathbb{C}\right).
\end{equation*}
Here we use the Poincar\'e duality to identify $H^{2g}(\overline{\mathcal{M}}_{g,n},\mathbb{C})$ and $H_{2(2g-3+n)}(\overline{\mathcal{M}}_{g,n},\mathbb{C})$. It is shown in \cite{JPPZ} that
\begin{equation*}
    \DR_g(a_1,\ldots,a_n)\in RH^{2g}\left(\overline{\mathcal{M}}_{g,n}\right).
\end{equation*}
To construct the double ramification hierarchy, we need a key Lemma:
\begin{lem}[\cite{Bur}, Lemma 3.2]
    For every class $A\in H^*\left(\overline{\mathcal{M}}_{g,n};\mathbb{C}\right)$, the integral
    \begin{equation}
        \int_{\overline{\mathcal{M}}_{g,n}}\DR_g(a_1,\ldots,a_n)\lambda_g A
        \label{eq:drpoly}
    \end{equation}
    is a polynomial in $a_1,\ldots,a_n$ of degree $2g$.
\end{lem}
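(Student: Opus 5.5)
The plan follows Buryak's original argument: establish polynomiality for the DR cycle itself in the tautological ring (in a suitable sense), then observe that the factor $\lambda_g$ kills every term except those of the correct degree.

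\textbf{Step 1: a formula for the DR cycle.} I would invoke the result of \cite{JPPZ} (Pixton's formula) giving $\DR_g(a_1,\ldots,a_n)$ as an explicit sum over stable graphs $G$ with decorations. Each summand is a pushforward $\xi_{G*}$ of a product of $\psi$-classes. Its coefficient is the constant term in $r$ of a sum over weightings $w$ modulo $r$ of the half-edges, and this expression is polynomial in $r$ for large $r$. The resulting cycle is polynomial in the $a_i$ of degree $2g$, homogeneous of degree $2d$ in codimension $d$ pieces; in particular the codimension $g$ part, which is the DR cycle, is a homogeneous polynomial of degree $2g$ in $a_1,\ldots,a_n$ with tautological coefficients.

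\textbf{Step 2: pair with $\lambda_g A$.} Since integration against $\lambda_g A$ is linear, the integral in \eqref{eq:drpoly} is a finite linear combination of these polynomial coefficients. It is therefore a polynomial of degree at most $2g$, and homogeneous of degree exactly $2g$ if nonzero.

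An alternative that avoids the full strength of Pixton's formula is the following. The class $\lambda_g$ vanishes on $\overline{\mathcal{M}}_{g,n}\setminus\mathcal{M}_{g,n}^{ct}$, the locus of curves not of compact type. So it suffices to know that the restriction of $\DR_g$ to compact type is polynomial. Hain's formula gives this directly: on $\mathcal{M}^{ct}_{g,n}$,
\begin{equation*}
    \DR_g(a_1,\ldots,a_n)=\frac{1}{g!}\Theta^g,\quad \Theta=\sum_{j}\frac{a_j^2}{2}\psi_j-\frac14\sum_{h,S}a_S^2\,\delta_h^S,
\end{equation*}
where $a_S=\sum_{i\in S}a_i$. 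Here $\Theta$ is quadratic in the $a_i$, so $\Theta^g$ is a homogeneous polynomial of degree $2g$. Multiplying by $\lambda_g$ reduces the integral to one over the compact type locus (using $\lambda_g\cdot(\text{extension by zero})$ and the excision sequence), which yields the claim.

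\textbf{Main obstacle.} The hard part is justifying that $\lambda_g\DR_g$ depends only on the compact type restriction, that is, that any two extensions differ by classes supported on the non-compact-type boundary, which $\lambda_g$ annihilates. Equivalently, one must take Hain's formula (Marcus–Wise / Grushevsky–Zakharov) as input. I would cite these results rather than reprove them, since the lemma is quoted from \cite{Bur}.
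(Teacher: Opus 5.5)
Your proposal is correct. The paper gives no proof of this lemma; it only quotes \cite{Bur}. Buryak's argument there is the route you list as your alternative.

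That argument combines Hain's formula $\DR_g(a_1,\ldots,a_n)|_{\mathcal{M}^{ct}_{g,n}}=\frac{1}{g!}\Theta^g$ (established for the rubber DR cycle by Marcus--Wise) with the vanishing $\xi_{loop}^*\lambda_g=0$. The vanishing holds because the pulled-back Hodge bundle has a trivial quotient, given by the residue at the node. By the projection formula and the excision sequence, this yields $\lambda_g\DR_g(a_1,\ldots,a_n)=\frac{1}{g!}\lambda_g\Theta^g$ on all of $\overline{\mathcal{M}}_{g,n}$, with $\Theta$ extended to $\overline{\mathcal{M}}_{g,n}$ by the same formula. You should state it that way rather than as ``extension by zero'' or ``reducing the integral to the compact type locus''. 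You should also run the excision step in Chow groups with $\mathbb{Q}$-coefficients, where pushforward along $\xi_{loop}$ onto the non-compact-type locus is surjective, and only then pass to cohomology.

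Your primary route via Pixton's formula \cite{JPPZ} is genuinely different. It proves the stronger fact that $\DR_g(a_1,\ldots,a_n)$ itself is a homogeneous polynomial of degree $2g$ with tautological coefficients, so the factor $\lambda_g$ plays no role. The cost is that it rests on a much deeper and later theorem. Moreover, polynomiality in the $a_i$ of the $r$-constant term is a separate result from the polynomiality in $r$ that you quote. The edge weights are residues of the $a_S$ modulo $r$, so the weighting sums are a priori only piecewise polynomial in the $a_i$; you must cite that result explicitly rather than infer it.

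The Hain route uses $\lambda_g$ essentially and needs only the compact-type formula that was available when \cite{Bur} was written. It also yields the explicit closed expression $\frac{1}{g!}\lambda_g\Theta^g$, which is what is used in practice to compute the DR Hamiltonians.
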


Recall that we use the notation $(u^{\alpha,s};\theta_\alpha^s)$ to denote the dual coordinate system (See Definition~\ref{defn:dual}). Given a numerical partial CohFT $\{\Lambda_{g,n}\}$, we can define a series of differential polynomials following \cite{BR1}:
\begin{equation}
    \begin{aligned}
        g_{\alpha,p}=&\sum_{\substack{g,n\geq 0\\2g-1+n>0}}\frac{\varepsilon^{2g}}{n!}\sum_{\substack{s_1,\ldots,s_n\in\mathbb{Z}_{\geq 0}\\s_1+\cdots+s_n=2g}}u^{\alpha_1,s_1}\cdots u^{\alpha_n,s_n}\\
        &\times\Coef_{a_1^{s_1}\cdots a_n^{s_n}}\int_{\overline{\mathcal{M}}_{g,n+1}}\DR_g(-\sum_{i=1}^n a_i,a_1,\ldots,a_n)\lambda_g\psi_1^p\Lambda_{g,n+1}(\phi_\alpha\otimes\phi_{\alpha_1}\otimes\cdots\otimes\phi_{\alpha_n}),
    \end{aligned}
    \label{eq:drhamiltonian}
\end{equation}
for all $1\leq\alpha\leq N$ and all $p\geq 0$. Denote $\overline{g}_{\alpha,p}=\int g_{\alpha,p}\in\hat{\mathscr{F}}^0$. 
\begin{defn}[\cite{Bur}]
    The double ramification (DR) hierarchy associated with a numerically partial CohFT $\{\Lambda_{g,n}\}$ is defined to be the following system
    \begin{equation}
        \frac{\partial u^\alpha}{\partial t^{\beta,p}}=\eta^{\alpha\mu}\partial\frac{\delta\overline{g}_{\beta,p}}{\delta u^\mu},\quad 1\leq\alpha,\beta\leq N,\, p\geq 0.
        \label{eq:dr}
    \end{equation}
\end{defn}

The matrix differential operator 
\begin{equation*}
    \mathscr{P}_1^{\DR,\alpha\beta}=\eta^{\alpha\beta}\partial
\end{equation*}
satisfying the antisymmetry property \eqref{eq:antisym} and the leading term property \eqref{eq:leadingterm}. Therefore,  it corresponds to a local functional
\begin{equation}
    P_1^{\DR}=\int\frac{1}{2}\eta^{\alpha\beta}\theta_{\alpha}\theta_{\beta}^1\in\hat{\mathscr{F}}^2_1
    \label{eq:drhamiltonianstructure1}
\end{equation}
and defines a bracket $\{-,-\}_{P_1^\DR}$ on $\hat{\mathscr{F}}^0$. It is easy to check that $P_1^{DR}$ is a Hamiltonian structure. In the CohFT case, it can be shown that $\{\overline{g}_{\alpha_1,p_1},\overline{g}_{\alpha_2,p_2}\}_{P_1^\DR}=0$ and the flows of \eqref{eq:dr} are commutative. The proof of this fact in \cite{Bur} does not involve \ref{item:glloop} and remains valid if the axiom \ref{item:gltree} is replaced by \eqref{eq:gltree*}. So the flows of \eqref{eq:dr} are still commutative and this system is Hamiltonian.

Define the following differential polynomial
\begin{align*}
    g=&\sum_{\substack{g,n\geq 0\\2g-2+n>0}}\frac{\varepsilon^{2g}}{n!}\sum_{\substack{s_1,\ldots,s_n\in\mathbb{Z}_{\geq 0}\\s_1+\cdots+s_n=2g}}u^{\alpha_1,s_1}\cdots u^{\alpha_n,s_n}\\
    &\times\Coef_{a_1^{s_1}\cdots a_n^{s_n}}\int_{\overline{\mathcal{M}}_{g,n}}\DR_g(a_1,\ldots,a_n)\lambda_g\Lambda_{g,n}(\phi_{\alpha_1}\otimes\cdots\otimes\phi_{\alpha_n}).
\end{align*}
Throughout this paper, we use the notation $g$ for both this differential polynomial and genera, in accordance with the convention of \cite{BR2}. The meaning of g will always be clear from the context.
Denote $\overline{g}=\int g$. Note that the integral \eqref{eq:drpoly} is defined only when $\sum_{i=1}^n a_i=0$, and therefore its polynomial representation is not unique. For example, two such representation may differ by a polynomial in $\sum_{i=1}^n a_i$. In particular, the differential polynomial $g$ depends on the choice of polynomial representations of the integrals involved in its definition. However, the local functional $\overline{g}$ is uniquely determined (see \cite{Bur,BR1}).

In \cite{BRS}, the authors define the following matrix differential operators
\begin{equation}
    \begin{aligned}
        \mathscr{P}_2^{\DR,\alpha\beta}=&\sum_{s\geq 0}\Bigg((1+d+s-|\phi_\alpha|-|\phi_\beta|)\eta^{\alpha\mu}\eta^{\beta\nu}\frac{\partial}{\partial u^{\nu,s}}\left(\frac{\delta\overline{g}}{\delta u^\mu}\right)\partial^{s+1}\\
        &+(\frac{1}{2}+\frac{d}{2}+s-|\phi_\beta|)\eta^{\alpha\mu}\eta^{\beta\nu}\partial\left(\frac{\partial}{\partial u^{\nu,s}}\left(\frac{\delta\overline{g}}{\delta u^\mu}\right)\right)\partial^s\Bigg)\\
        &+\eta^{\alpha\mu}\eta^{\beta\nu}\Lambda_{0,3}(\phi_\mu\otimes\phi_\nu\otimes r^\zeta\phi_\zeta)\partial.
    \end{aligned}
    \label{eq:drhamiltonianstructure2}
\end{equation}
It also satisfies \eqref{eq:antisym} and \eqref{eq:leadingterm}, and therefore corresponds to a local functional $P_2^{\DR}\in\hat{\mathscr{F}}^2_{\geq 1}$. It is shown in loc. cit. that the equality $[P_1^\DR,P_2^\DR]=0$ holds for a CohFT. Moreover, their proof of this identity is almost entirely combinatorial. The only aspect of CohFT they use is the identity $(|\phi_\alpha|+|\phi_\beta|-d)\eta^{\alpha\beta}=0$, which remains valid for a numerically partial CohFT. Therefore, the same equality remains valid. It is shown in \cite{BR2} that for a semisimple CohFT, the equality $[P_2^\DR,P_2^\DR]=0$ also holds.

\section{$\Gamma$-reduction of hierarchies and DR/DZ equivalence}
\label{sec:red}
Let $\{\Lambda_{g,n}\}$ be a semisimple CohFT with numerical finite symmetry $\Gamma$. We extend $\phi_1$ to a basis $\{\phi_{\alpha^\prime}\}_{\alpha^\prime\in I}$ of $H^\Gamma$ and choose a basis $\{\phi_{\alpha^\pprime}\}_{\alpha^\pprime\in J}$ for $H^\mov$. Now we have two DR hierarchies associated with $\{\Lambda_{g,n}\}$ and $\{\Lambda_{g,n}^\Gamma\}$ respectively. We call the former the ambient DR hierarchy and the latter the reduced DR hierarchy.

\subsection{Reduction of the DR hierarchy and its bihamiltonian structure}
In \cite{BDGR1}, the authors define the double ramification correlators for the CohFT $\{\Lambda_{g,n}\}$
\begin{equation*}
    \langle\tau_{p_1}(e_1)\cdots\tau_{p_n}(e_n)\rangle_g^\DR\in\mathbb{C},\quad e_i\in H,\,g,n\geq 0,\,2g-2+n>0.
\end{equation*}
They depend linearly on each $e_i$, and are invariant under permutations of pairs $\{(p_i,e_i)\}$. Later in \cite{BDGR2}, they find a geometric formula for DR correlators. Roughly speaking, they can be represented as
\begin{align*}
    &\langle\tau_{p_1}(e_1)\cdots\tau_{p_n}(e_n)\rangle_g^\DR=
    \sum_{\text{a graph sum}}(\text{constant})\\
    &\qquad\times\Coef_{a_1^{p_1}\cdots a_n^{p_n}}\int_{\overline{\mathcal{M}}_{g,n+1}}\DR_g(-\sum_{i=1}^n a_i,a_1,\ldots,a_n)\lambda_g\Lambda_{g,n+1}(\phi_1\otimes e_1\otimes\ldots\otimes e_n),
\end{align*}
where the constant coefficients involve only $g,n$, and $\sum_{i=1}^n p_i$. In particular, DR correlators satisfy
\begin{equation*}
    \langle\tau_{p_1}(\gamma.e_1)\cdots\tau_{p_n}(\gamma.e_n)\rangle_g^\DR=\langle\tau_{p_1}(e_1)\cdots\tau_{p_n}(e_n)\rangle_g^\DR,
\end{equation*}
which implies that
\begin{equation*}
    \langle\tau_{p_0}(e)\tau_{p_1}(e_1)\cdots\tau_{p_n}(e_n)\rangle_g^\DR=0,\quad\forall e_i\in H^\Gamma,\,e\in H^\mov
\end{equation*}
by Lemma~\ref{lem:trick}. We can also define the generating series of DR correlators
\begin{equation*}
    \mathcal{F}^\DR=\sum_{g\geq 0}\varepsilon^{2g}\mathcal{F}_g=\sum_{\substack{g,n\geq 0\\2g-2+n>0}}\frac{\varepsilon^{2g}}{n!}\sum_{p_1,\ldots,p_n\in\mathbb{Z}_{\geq 0}}\langle\tau_{p_1}(\phi_{\alpha_1})\dots\tau_{p_n}(\phi_{\alpha_n}))\rangle_{g}^\DR t^{\alpha_1,p_1}\cdots t^{\alpha_n,p_n}.
\end{equation*}
It is invariant under the $\Gamma$-action on $\mathcal{O}_{H^\infty}$.

\begin{lem}[\cite{BDGR1}, Proposition 6.3]
    The DR generating series satisfies the string equation
    \begin{equation*}
        \frac{\partial\mathcal{F}^\DR}{\partial t^{1,0}}=\sum_{p\geq 0}t^{\alpha,p+1}\frac{\partial\mathcal{F}^\DR}{\partial t^{\alpha,p}}+\frac{1}{2}\eta_{\alpha\beta}t^{\alpha,0}t^{\beta,0}.
    \end{equation*}
\end{lem}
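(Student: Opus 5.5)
This is the string equation for the DR potential, Proposition 6.3 of \cite{BDGR1}. I would reproduce that argument rather than derive it from the cycle-level axioms, because the DR correlators are defined through the DR hierarchy and not directly as integrals against $\Lambda_{g,n}$. The plan is to pass from $\mathcal{F}^\DR$ to a distinguished solution of the DR hierarchy and to use the string flow of that hierarchy.

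Recall how \cite{BDGR1} defines things. There is a tau-structure: differential polynomials $h_{\alpha,p}=\frac{\delta\overline{g}_{\alpha,p+1}}{\delta u^1}$ such that $\partial_{t^{\beta,q}}h_{\alpha,p-1}=\partial\Omega^\DR_{\alpha,p;\beta,q}$ for symmetric two-point functions $\Omega^\DR$. The series $\mathcal{F}^\DR$ is characterized by two conditions. First, $\frac{\partial^2\mathcal{F}^\DR}{\partial t^{\alpha,p}\partial t^{\beta,q}}=\Omega^\DR_{\alpha,p;\beta,q}$ evaluated at the topological solution $u^{\rm top}$, which is the unique solution with $u^\alpha(x=t^{1,0},t^{\bullet,>0}=0)=\delta^\alpha_1 x$. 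Second, normalization conditions at $t=0$ fix the constant and linear terms. The proof then has three steps.

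\textbf{Step 1: the string flow.} Show that $\overline{g}_{1,0}=\int\bigl(\frac12\eta_{\alpha\beta}u^\alpha u^\beta\bigr)$. This uses the identity axiom \ref{item:id} together with $\DR_g(0,a_1,\ldots,a_n)=\pi^*\DR_g(a_1,\ldots,a_n)$ and $\lambda_g=\pi^*\lambda_g$. The degree count forces $\pi_*$ of the integrand to vanish for $g\ge1$, so only $g=0$, $n=2$ survives. Hence $\partial_{t^{1,0}}u^\alpha=u^{\alpha}_x$. Similarly, the identity axiom gives $\frac{\partial g_{\alpha,p}}{\partial u^1}=g_{\alpha,p-1}$. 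This implies the string equation for the hierarchy: the operator $\sum t^{\alpha,p+1}\partial_{t^{\alpha,p}}-\partial_{t^{1,0}}$ applied to $u^{\rm top}$ gives $-\delta^\alpha_1$. This is proved by the usual uniqueness argument, since both sides solve the same linearized system with the same initial data.

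\textbf{Step 2: lift to $\Omega^\DR$.} Show that $\frac{\partial\Omega^\DR_{\alpha,p;\beta,q}}{\partial u^1}=\Omega^\DR_{\alpha,p-1;\beta,q}+\Omega^\DR_{\alpha,p;\beta,q-1}$, with the convention $\Omega_{\alpha,-1;\beta,q}=\eta_{\alpha\mu}\,\delta^{\mu}_{\ \,}\cdots$, which gives the genus-zero boundary term $\eta_{\alpha\beta}$ when $p=q=0$. This follows from the corresponding relation for $h_{\alpha,p}$ and the defining equation of $\Omega$, up to a constant that is killed by the normalization.

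\textbf{Step 3: assemble.} Combining Steps 1 and 2 with the chain rule along $u^{\rm top}$ gives the second derivatives of
\[
S:=\partial_{t^{1,0}}\mathcal{F}^\DR-\sum t^{\alpha,p+1}\partial_{t^{\alpha,p}}\mathcal{F}^\DR-\tfrac12\eta_{\alpha\beta}t^{\alpha,0}t^{\beta,0},
\]
and these vanish. So $S$ is affine-linear in the times. The normalizations of $\mathcal{F}^\DR$, namely the absence of unstable terms and the fact that the first derivatives are $\int$ of $g_{\alpha,p}$ at $u=0$, then force $S=0$.

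The main obstacle is Step 2. Relating $\partial_{u^1}$ of the two-point functions to lower descendants requires tracking the non-uniqueness of $\Omega$ up to constants and using commutativity of the flows. In the numerically partial setting one must also check that only the axioms \ref{item:id} and \eqref{eq:gltree*} enter, which matches the paper's remark that \cite{Bur} does not use \ref{item:glloop}.
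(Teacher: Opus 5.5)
The paper gives no argument for this lemma; it only quotes \cite{BDGR1}, Proposition 6.3. Your plan reconstructs the natural proof, and the ingredients of Steps 1 and 2 are the right ones. These are $\overline g_{1,0}=\int\frac12\eta_{\alpha\beta}u^\alpha u^\beta$, then $\partial_{u^1}\overline g_{\alpha,p}=\overline g_{\alpha,p-1}$, hence $Lu^{\mathrm{str}}=\delta_1$ for $L=\partial_{t^{1,0}}-\sum_p t^{\alpha,p+1}\partial_{t^{\alpha,p}}$ by the uniqueness argument, and finally a relation for $\partial_{u^1}\Omega^{\DR}$.

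One correction in Step 2: the integration constant is not killed. Two facts fix it: $\Omega^{\DR}$ vanishes at $u^{\bullet,\bullet}=0$, and its only $u^1$-linear term is of genus zero, equal to $\eta_{\alpha\beta}\delta_{p,0}\delta_{q,0}u^1$. Hence
\[
\frac{\partial\Omega^{\DR}_{\alpha,p;\beta,q}}{\partial u^1}=\Omega^{\DR}_{\alpha,p-1;\beta,q}+\Omega^{\DR}_{\alpha,p;\beta,q-1}+\eta_{\alpha\beta}\delta_{p,0}\delta_{q,0},\qquad \Omega^{\DR}_{\bullet,-1;\bullet}=0,
\]
which is what your garbled convention should say. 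With this, Step 3 correctly shows that $S$ is affine-linear. Step 2 is routine; it is not where the difficulty lies.

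The genuine gap is the last sentence of Step 3. For affine-linear $S$, the identity $S=0$ is equivalent to
\[
\langle\tau_0(\phi_1)\rangle^{\DR}_g=0,\qquad \langle\tau_0(\phi_1)\tau_q(\phi_\gamma)\rangle^{\DR}_g=\langle\tau_{q-1}(\phi_\gamma)\rangle^{\DR}_g ,
\]
which is exactly the one- and two-point content of the lemma itself. One-point DR correlators are not encoded in $\Omega^{\DR}$, so nothing in Steps 1--2 reaches them.

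The normalization you invoke does not exist in that form. First derivatives of $\mathcal F^{\DR}$ are not differential polynomials evaluated on the solution; only $\partial_{t^{1,0}}\partial_{t^{\alpha,p}}\mathcal F^{\DR}=h_{\alpha,p-1}$ is. If you mean that one-point correlators vanish at $t=0$, that contradicts the lemma. For the trivial CohFT, the term $\frac{\varepsilon^2}{24}u_x^2$ of $h_{1,1}$ gives $\langle\tau_0(\phi_1)\tau_2(\phi_1)\rangle^{\DR}_1=\frac1{24}$, so the string equation forces $\langle\tau_1(\phi_1)\rangle^{\DR}_1=\frac1{24}$.

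To close the argument you need two things:
\begin{enumerate}
\item Use the actual convention by which \cite{BDGR1} fixes one-point correlators, essentially through the two-point ones: $\langle\tau_d(e_\alpha)\rangle^{\DR}_g:=\langle\tau_0(\phi_1)\tau_{d+1}(e_\alpha)\rangle^{\DR}_g$. Then the $t$-linear part of $S$ vanishes by construction for $q\ge1$, and for $q=0$ because $h_{\gamma,-1}=\eta_{\gamma\mu}u^\mu$.
\item Prove separately that the constant term vanishes: $\langle\tau_0(\phi_1)\tau_1(\phi_1)\rangle^{\DR}_g=h_{1,0}\big|_{u^{\alpha,s}=\delta^\alpha_1\delta^s_1}=0$ for $g\ge1$. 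This holds because only terms of $g_{1,1}$ with all insertions equal to $\phi_1$ contribute there. For those, $\Lambda$ and $\lambda_g$ are pulled back from $\overline{\mathcal M}_{g,1}$, while $\pi_*(\DR_g\,\psi_1)$ has degree $-g<0$.
\end{enumerate}
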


Consider the following functions on the big phase space $H^\infty$
\begin{equation}
    \widetilde{u}^{\str,\alpha,s}=\left(\frac{\partial}{\partial t^{1,0}}\right)^s\left(\eta^{\alpha\beta}\frac{\partial^2\mathcal{F}^\DR}{\partial t^{1,0}\partial t^{\beta,0}}\right)\in\mathcal{O}_{H^\infty}.
    \label{eq:drnormalcoor}
\end{equation}
Define $\mathcal{O}_{H^\infty}^{\geq s}$ to be the ideal of $\mathcal{O}_{H^\infty}$ consisting of formal power series all of whose monomials $\prod_{i=1}^n t^{\alpha_i,p_i}$ satisfies $\sum_{i=1}^n p_i\geq s$. Applying the string equation inductively, one can show that
\begin{equation*}
    (\widetilde{u}^{\str,\alpha,s}-\delta^\alpha_1\delta^s_1)-t^{\alpha,s}\in\mathcal{O}_{H^\infty}^{\geq s+1}.
\end{equation*}
Define $\mathscr{A}^{\wk}_{\widetilde{u}}$ to be the completion of the ring
\begin{equation*}
    \mathbb{C}[[\varepsilon]][\widetilde{u}^{\alpha,s}-\delta_1^\alpha\delta_1^s]
\end{equation*}
with respect to the valuation
\begin{equation*}
    v(\widetilde{u}^{\alpha,s}-\delta_1^\alpha\delta_1^s)=s+1,\quad 1\leq\alpha\leq N,\,s\geq 0.
\end{equation*}
As a ring, it is isomorphic to the formal power series ring in variables $\varepsilon,\,\widetilde{u}^{\alpha,s}-\delta_1^\alpha\delta_1^s$:
\begin{equation}
    \mathscr{A}^\wk_{\widetilde{u}}=\mathbb{C}[[\varepsilon]][[\widetilde{u}^{\alpha,s}-\delta_1^\alpha\delta_1^s]].
    \label{eq:weak}
\end{equation}
We have the following inclusion map
\begin{equation*}
    \hat{\mathscr{A}}_{\widetilde{u}}^0\hookrightarrow\mathscr{A}^{\wk}_{\widetilde{u}},\quad\varepsilon^s\widetilde{u}^{\alpha,s}\mapsto \varepsilon^s(\widetilde{u}^{\alpha,s}-\delta^\alpha_1\delta^s_1)+\delta^\alpha_1\delta^s_1\varepsilon^s.
\end{equation*}
The induced coordinate system $(\widetilde{u}^{\alpha,s};\widetilde{\theta}_\alpha^s)$ on $J^\infty(\hat{\mathfrak{X}})$ is called the DR normal coordinate system. Now, the functions \eqref{eq:drnormalcoor} define an isomorphism of rings
\begin{equation}
    \mathscr{A}^{\wk}_{\widetilde{u}}\rightarrow\mathcal{O}_{H^\infty},\quad \widetilde{u}^{\alpha,s}-\delta^\alpha_1\delta^s_1\mapsto \widetilde{u}^{\str,\alpha,s}-\delta^\alpha_1\delta^s_1.
    \label{eq:drnormal2bigphase}
\end{equation}
There is a unique element $\Omega^\DR_{\alpha,p;\beta,q}\in\hat{\mathscr{A}}^0_{\widetilde{u}}\subset\mathscr{A}^{\wk}_{\widetilde{u}}$ such that
\begin{equation*}
    \Omega^\DR_{\alpha,p;\beta,q}\Big\vert_{\widetilde{u}^{\bullet,\bullet}\rightarrow\widetilde{u}^{\str,\bullet,\bullet}}=\frac{\partial^2\mathcal{F}^\DR}{\partial t^{\alpha,p}\partial t^{\beta,q}}
\end{equation*}
for any $1\leq\alpha,\beta\leq N$ and $p,q\geq 0$ (see \cite{BDGR1}). 

In the same paper, the authors also introduce a Miura-type transformation
\begin{equation}
    u^\alpha\mapsto \widetilde{u}^\alpha=\eta^{\alpha\mu}\frac{\delta\overline{g}_{\mu,0}}{\delta u^1}.
    \label{eq:dr2normal}
\end{equation}
of the 2nd kind and show that the ambient DR hierarchy \eqref{eq:dr} in the DR normal coordinate system becomes
\begin{equation}
    \frac{\partial \widetilde{u}^\alpha}{\partial t^{\beta,p}}=\eta^{\alpha\mu}\partial\Omega^\DR_{\mu,0;\beta,p},\quad 1\leq\alpha,\beta\leq N,\, p\geq 0,
    \label{eq:normaldr}
\end{equation}
and
\begin{equation*}
    \widetilde{u}^{\alpha}:=\widetilde{u}^{\str,\alpha}\Big\vert_{t^{t,0}\rightarrow t^{1,0}+x}
\end{equation*}
is a solution of it. Here the action of the global vector field $\partial$ should be understood as taking derivatives with respect to the formal loop variable $x$. 

By the explicit formula \eqref{eq:drhamiltonian} of $g_{\mu,0}$, one can easily check that this Miura-type transformation is $\Gamma$-linearized and therefore, the DR normal coordinate is a $\Gamma$-linearized coordinate system by Lemma~\ref{lem:linearizedmiura}. We can extend the $\Gamma$-action on $\hat{\mathscr{A}}^0_{\widetilde{u}}$ to $\mathscr{A}^{\wk}_{\widetilde{u}}$ in an obvious way. This makes the isomorphism \eqref{eq:drnormal2bigphase} $\Gamma$-equivariant.

In summary, we have the following commutative diagram
\begin{equation*}
    \begin{tikzcd}
        \hat{\mathscr{A}}_{u} \arrow[d,"\iota^*"]  & \hat{\mathscr{A}}_{\widetilde{u}} \arrow[l, "\cong","\eqref{eq:dr2normal}"'] \arrow[d,"\iota^*"] & \hat{\mathscr{A}}_{\widetilde{u}}^0 \arrow[l,hook'] \arrow[r,hook] \arrow[d,"\iota^*"] & \mathscr{A}^{\wk}_{\widetilde{u}} \arrow[r,"\cong"',"\eqref{eq:drnormal2bigphase}"] \arrow[d,"\iota^*"] & \mathcal{O}_{H^\infty} \arrow[d,"\iota^{\infty,*}"]\\
        \hat{\mathscr{A}}^\Gamma_{u}  & \hat{\mathscr{A}}^\Gamma_{\widetilde{u}} \arrow[l, "\cong"] & \hat{\mathscr{A}}_{\widetilde{u}}^{\Gamma,0} \arrow[l,hook'] \arrow[r,hook] & \mathscr{A}^{\Gamma,\wk}_{\widetilde{u}} \arrow[r,"\cong"'] & \mathcal{O}_{H^{\Gamma,\infty}}
    \end{tikzcd}.
\end{equation*}
Here the horizontal arrows in the top rows are $\Gamma$-equivariant, the bottom horizontal arrows are constructed analogously for $\{\Lambda_{g,n}^\Gamma\}$, and the vertical arrows are all given by quotienting out variables with indices in $J$. In particular, the restriction of the CohFT $\{\Lambda_{g,n}\}$ to $\{\Lambda_{g,n}^{\Gamma}\}$ corresponds to the reduction of the associated DR hierarchies to the $\Gamma$-invariant flows.

Next, we show that the bihamiltonian structure also admits a reduction.
\begin{lem}
    For all $i=1,2,\,\mu^\prime\in I$, and $p\geq 0$, the following equations hold:
    \begin{align*}
        \overline{\iota}^*[-,P^\DR_i]&=[\overline{\iota^*}(-),\overline{\iota}^*(P_i^\DR)]^\Gamma,\\
        \overline{\iota}^*[-,\overline{g}_{\mu^\prime,p}]&=[\overline{\iota^*}(-),\overline{\iota}^*(\overline{g}_{\mu^\prime,p})]^\Gamma,\\
        \overline{\iota}^*[-,[P^\DR_i,\overline{g}_{\mu^\prime,p}]]&=[\overline{\iota^*}(-),\overline{\iota}^*([P^\DR_i,\overline{g}_{\mu^\prime,p}])]^\Gamma.
    \end{align*}
    \label{lem:poissonpreserve}
\end{lem}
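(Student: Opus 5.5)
All three identities will be obtained from Lemma~\ref{lem:main}. It is enough to find, for each of $P_i^\DR$, $\overline{g}_{\mu^\prime,p}$ and $[P_i^\DR,\overline{g}_{\mu^\prime,p}]$, a density $\widetilde{Q}$ in the dual coordinate system $(u^{\alpha,s};\theta_\alpha^s)$ satisfying \eqref{eq:main}. This coordinate system is $\Gamma$-linearized by \eqref{eq:action}.

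The common mechanism is Lemma~\ref{lem:trick}. Each of these densities is a series in $u^{\alpha,s}$, $\theta_\alpha^s$ and $\varepsilon$. Its coefficients are integrals of $\Lambda_{g,n}$ against tautological classes ($\DR_g\lambda_g\psi_1^p$, which is tautological by \cite{JPPZ}), contracted with $\eta^{\alpha\beta}$ and with the constants $r^\zeta$ and $|\phi_\alpha|$. By \eqref{eq:nsym}, the $\Gamma$-invariance of $\eta$ (Lemma~\ref{lem:equi2vanish}), the $\Gamma$-invariance of the grading, and \eqref{eq:deg}, these coefficient tensors are $\Gamma$-invariant once every free index, upper or lower, is written with the appropriate $\act(\gamma)$ or $\act(\gamma^{-1})$. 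Condition \eqref{eq:main} asks that $\partial\widetilde{Q}/\partial u^{\alpha^\pprime,s}$ and $\partial\widetilde{Q}/\partial\theta_{\alpha^\pprime}^s$ vanish after all other moving variables are set to zero. That is, the coefficient of a monomial with exactly one variable indexed by $J$ and all others indexed by $I$ must vanish. For an invariant tensor this is exactly what Lemma~\ref{lem:trick} gives.

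The cases are as follows.

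\emph{The density $g_{\mu^\prime,p}$.} In \eqref{eq:drhamiltonian}, take one $\alpha_i\in J$ and the remaining indices, including $\mu^\prime$, in $I$. The coefficient is then an integral of $\Lambda_{g,n+1}$ with exactly one moving insertion against a tautological class, so it vanishes by Lemma~\ref{lem:equi2vanish}.

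\emph{The structure $P_1^\DR$.} Its density $\frac12\eta^{\alpha\beta}\theta_\alpha\theta^1_\beta$ has no mixed terms, since $\eta$ is block diagonal by Lemma~\ref{lem:equi2vanish}.

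\emph{The structure $P_2^\DR$.} Take the density $\frac12\theta_\alpha\mathscr{P}_2^{\DR,\alpha\beta}(\theta_\beta)$. The required vanishing reduces to two facts.
\begin{itemize}
\item[(a)] $\iota^*\bigl(\mathscr{P}^{\DR,\alpha^\pprime\beta^\prime}_{2,s}\bigr)=0$, together with the corresponding statement for $\mathscr{P}^{\DR,\alpha^\prime\beta^\pprime}_{2,s}$.
\item[(b)] The derivative of $\iota^*\bigl(\mathscr{P}^{\DR,\alpha^\prime\beta^\prime}_{2,s}\bigr)$ with respect to moving $u$-variables vanishes.
\end{itemize}
Both are statements about the coefficients of $\partial_{u^{\nu,s}}\bigl(\delta\overline{g}/\delta u^\mu\bigr)$ and of $\Lambda_{0,3}(\phi_\mu\otimes\phi_\nu\otimes r^\zeta\phi_\zeta)$, with one moving index among otherwise fixed ones. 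These coefficients are again $\Gamma$-invariant tensors built from DR integrals, $\eta$, $r$ and the grading, so Lemma~\ref{lem:trick} applies. There is one technical point: $g$ itself depends on a choice of polynomial representatives. I will choose these representatives $\Gamma$-invariantly, for instance by averaging over $\Gamma$. This can be done since $\overline{g}$ is independent of the choice and $\mathscr{P}_2^\DR$ only involves $\overline{g}$.

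\emph{The brackets $[P_i^\DR,\overline{g}_{\mu^\prime,p}]$.} This is the step I expect to be the main obstacle, because \eqref{eq:main} is not obviously preserved by the Schouten bracket. I would argue directly. The bracket has density $\frac{\delta\overline{g}_{\mu^\prime,p}}{\delta u^\alpha}\,\mathscr{P}_i^{\alpha\beta}(\theta_\beta)$, up to sign and a total derivative. This density is linear in $\theta$, and its coefficient tensors are contractions of the $\Gamma$-invariant tensors above with the fixed index $\mu^\prime$. Any contraction of invariant tensors over a full index set is again invariant, so Lemma~\ref{lem:trick} applies to the resulting total tensor. Such a contraction may contain intermediate summed indices in $J$, but this does no harm, because Lemma~\ref{lem:trick} only needs the invariance of the final tensor.

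Once each density satisfies \eqref{eq:main}, Lemma~\ref{lem:main} yields the three identities.
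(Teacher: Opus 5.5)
Your proposal is correct and follows essentially the same route as the paper. You reduce via Lemma~\ref{lem:main} to checking \eqref{eq:main} for $g_{\mu^\prime,p}$, $\frac12\theta_\alpha\mathscr{P}_i^{\DR,\alpha\beta}(\theta_\beta)$ and the bracket densities, and you kill every term with exactly one moving index by Lemma~\ref{lem:trick} (in the form of Lemma~\ref{lem:equi2vanish}). Phrasing this through $\Gamma$-invariance of the whole density, with $\Gamma$-averaged representatives for $g$, makes explicit the bracket case and the representative ambiguity, which the paper covers only by ``the proofs are the same''. In (b) you mean $\iota^*\bigl(\partial\mathscr{P}^{\DR,\alpha^\prime\beta^\prime}_{2,s}/\partial u^{\nu^\pprime,t}\bigr)=0$, not the derivative of $\iota^*(\cdot)$.
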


\begin{proof}
    The proofs of these equations are the same, so we only prove the first one here.
    According to Lemma~\ref{lem:main}, it suffices to show that the equation \eqref{eq:main} holds for certain representatives of $P^\DR_i,\,i=1,2$. We will check for $\widetilde{P}^\DR_i=\frac{1}{2}\theta_\alpha \mathscr{P}_i^{\DR,\alpha\beta}(\theta_\beta)$. The verification for $i=1$ is trivial, so we assume $i=2$. Note that the conditions under consideration are linear. Hence, we can check it term by term. 
    
    According to Lemma~\ref{lem:equi2vanish}, we have 
    \begin{align*}
        &\eta^{\alpha\mu}\eta^{\beta\nu}\Lambda_{0,3}(\phi_\mu,\phi_\nu,r^\zeta\phi_\zeta)\theta_\alpha\theta_\beta^1
        =\eta^{\alpha\mu}\eta^{\beta\nu}\Lambda_{0,3}(\phi_\mu,\phi_\nu,r^{\zeta^\prime}\phi_{\zeta^\prime})\theta_\alpha\theta_\beta^1\\
        =&\,\eta^{\alpha^\prime\mu^\prime}\eta^{\beta^\prime\nu^\prime}\Lambda_{0,3}(\phi_{\mu^\prime},\phi_{\nu^\prime},r^{\zeta^\prime}\phi_{\zeta^\prime})\theta_{\alpha^\prime}\theta_{\beta^\prime}^1
        +\eta^{\alpha^\pprime\mu^\pprime}\eta^{\beta^\pprime\nu^\pprime}\Lambda_{0,3}(\phi_{\mu^\pprime},\phi_{\nu^\pprime},r^{\zeta^\prime}\phi_{\zeta^\prime})\theta_{\alpha^\pprime}\theta_{\beta^\pprime}^1,
    \end{align*}
    and the resulting expression satisfies \eqref{eq:main}. Denote
    \begin{equation*}
        \widetilde{\mathscr{P}}_2^{\DR,\alpha\beta}=\mathscr{P}_2^{\DR,\alpha\beta}-\eta^{\alpha\mu}\eta^{\beta\nu}\Lambda_{0,3}(\phi_\mu,\phi_\nu,r^\zeta\phi_\zeta).
    \end{equation*}
    For $\alpha^\prime,\,\beta^\prime\in I$, the coefficients of the differential polynomial $\widetilde{\mathscr{P}}_2^{\DR,\alpha^\prime\beta^\prime}$ are sums of terms of the following form: 
    \begin{align*}
        &\eta^{\alpha^\prime\mu^\prime}\eta^{\beta^\prime\nu^\prime}(\text{Constant})\times u^{\alpha_1,s_1}\cdots u^{\alpha_n,s_n}\\
        &\times\Coef_{\ldots}\int_{\overline{\mathcal{M}}_{g,n+2}}(\text{Tautological classes})\times\Lambda_{g,n+2}(\phi_{\mu^\prime},\phi_{\nu^\prime},\phi_{\alpha_1},\ldots,\phi_{\alpha_n}).
    \end{align*}
    No summation over the indices $\alpha_1,\ldots,\alpha_n$ is assumed here. The notation $\Coef_{\ldots}$ means that the integral behind it is a polynomial of certain parameter and we take a coefficient of it. By Lemma~\ref{lem:equi2vanish}, if exactly one index among $\alpha_1,\ldots,\alpha_n$ lies in $J$, then the integral vanishes. In particular, a nonzero such term must satisfy \eqref{eq:main}. It follows that the same condition holds for
    \begin{equation*}
        \frac{1}{2}\theta_{\alpha^\prime}\widetilde{\mathscr{P}}_i^{\DR,\alpha^\prime\beta^\prime}(\theta_{\beta^\prime}).
    \end{equation*}
    The same argument applies to
    \begin{equation*}
        \frac{1}{2}\theta_{\alpha^\prime}\widetilde{\mathscr{P}}_i^{\DR,\alpha^\prime\beta^\pprime}(\theta_{\beta^\pprime}),\quad\frac{1}{2}\theta_{\alpha^\pprime}\widetilde{\mathscr{P}}_i^{\DR,\alpha^\pprime\beta^\prime}(\theta_{\beta^\prime}).
    \end{equation*}
    As for the term
    \begin{equation*}
        \frac{1}{2}\theta_{\alpha^\pprime}\widetilde{\mathscr{P}}_i^{\DR,\alpha^\pprime\beta^\pprime}(\theta_{\beta^\pprime}),
    \end{equation*}
    it depends quadratically in odd variables $\theta_{\alpha^\pprime}^s$ with index $\alpha^\pprime$ in $J$.
\end{proof}

Let $(P_1^{\DR,\Gamma},P_2^{\DR,\Gamma})$ be the pair of local functionals and $g_{\alpha^\prime,p}^\Gamma\in\hat{\mathscr{A}}^0$ be the differential polynomials associated with the reduced DR hierarchy. These objects are defined by \eqref{eq:drhamiltonianstructure1}, \eqref{eq:drhamiltonianstructure2}, and \eqref{eq:drhamiltonian} respectively.

\begin{thm}[Reduction of the bihamiltonian structure]For the reduced DR hierarchy, we have the following results:
    \begin{enumerate}[label=$\arabic*.$,ref=$\arabic*$]
        \item The pair $(P^{\DR,\Gamma}_1,P^{\DR,\Gamma}_2)$ is a bihamiltonian structure.
        \item The following bihamiltonian recursion relation holds on $\hat{\mathscr{F}}^{\Gamma,0}$:
        \begin{align*}
            \{-,\overline{g}^\Gamma_{\alpha^\prime,p}\}^\Gamma_{P^{\DR,\Gamma}_2}=&\left(p+\frac{3-d}{2}+|\phi_{\alpha^\prime}|\right)\{-,\overline{g}^\Gamma_{\alpha^\prime,p+1}\}^\Gamma_{P^{\DR,\Gamma}_1}\\
            &+\Lambda_{0,3}(\eta^{\beta^\prime\mu^\prime}\phi_{\mu^\prime},\phi_{\alpha^\prime},r^{\zeta^\prime}\phi_{\zeta^\prime})\{-,\overline{g}^\Gamma_{\beta^\prime,p}\}^\Gamma_{P^{\DR,\Gamma}_1},\quad\alpha^\prime\in I.
        \end{align*}
    \end{enumerate}
    In particular, the reduced DR hierarchy admits a bihamiltonian structure.
    \label{thm:main}
\end{thm}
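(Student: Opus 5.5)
The plan is to transport the known ambient identities to the reduced side through $\bar{\iota}^*$, using Lemma~\ref{lem:poissonpreserve}. The transport rests on the identifications
\[
\bar{\iota}^*(P_i^{\DR})=P_i^{\DR,\Gamma},\qquad \bar{\iota}^*(\overline{g}_{\alpha^\prime,p})=\overline{g}^\Gamma_{\alpha^\prime,p}.
\]
Once these hold, applying Lemma~\ref{lem:poissonpreserve} to the ambient identities gives the reduced ones. I would use the representatives $\widetilde{P}^\DR_i=\frac12\theta_\alpha\mathscr{P}_i^{\DR,\alpha\beta}(\theta_\beta)$, as in the proof of that lemma.

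\textbf{Identification of the Hamiltonians.} Setting $u^{\alpha^\pprime,s}=0$ in \eqref{eq:drhamiltonian} for $\alpha=\alpha^\prime\in I$ leaves only terms whose insertions all lie in $H^\Gamma$. These are exactly the integrals defining $g^\Gamma_{\alpha^\prime,p}$, since $\Lambda^\Gamma$ is the restriction of $\Lambda$. The same argument shows $\iota^*g=g^\Gamma$ for the generating polynomial $g$, and hence $\bar\iota^*\overline g=\overline g^\Gamma$.

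\textbf{Identification of the Poisson structures.} For $P_1$, the metric is block diagonal by Lemma~\ref{lem:equi2vanish}, so $\iota^*\widetilde P_1^\DR=\frac12\eta^{\alpha^\prime\beta^\prime}\theta_{\alpha^\prime}\theta^1_{\beta^\prime}$. For $P_2$, I apply $\iota^*$ to \eqref{eq:drhamiltonianstructure2}, which kills every $\theta_{\alpha^\pprime}$, so only entries $\mathscr{P}_2^{\DR,\alpha^\prime\beta^\prime}$ survive. Their coefficients involve $\eta^{\alpha^\prime\mu}\eta^{\beta^\prime\nu}=\eta^{\alpha^\prime\mu^\prime}\eta^{\beta^\prime\nu^\prime}$, again by orthogonality. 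They also involve $\iota^*\bigl(\partial_{u^{\nu^\prime,s}}\frac{\delta\overline g}{\delta u^{\mu^\prime}}\bigr)$. The key point here is that the variational derivative commutes with restriction in the $I$-directions:
\[
\iota^*\frac{\delta\overline g}{\delta u^{\mu^\prime}}=\frac{\delta\overline g^\Gamma}{\delta u^{\mu^\prime}},
\]
and likewise $\iota^*\partial_{u^{\nu^\prime,s}}=\partial_{u^{\nu^\prime,s}}\iota^*$. Both facts hold because $\iota^*$ just sets the $J$-variables to zero.

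The constant term $\Lambda_{0,3}(\phi_\mu,\phi_\nu,r^\zeta\phi_\zeta)$ reduces to its $I$-part because $r^{\alpha^\pprime}=0$ by \eqref{eq:deg}. The remaining data $d$ and $|\phi_{\alpha^\prime}|$ are unchanged, since the $\Gamma$-action preserves the grading. Hence $\iota^*\widetilde P_2^\DR$ is the representative of $P_2^{\DR,\Gamma}$, and in particular $P_2^{\DR,\Gamma}\in\hat{\mathscr F}^{\Gamma,2}_{\ge1}$.

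\textbf{Part 1.} The ambient structure satisfies $[P_i^\DR,P_j^\DR]=0$ for all $i,j$: by \cite{BRS} for $[P_1,P_2]$, and by \cite{BR2} for $[P_2,P_2]$ using semisimplicity. Applying $\bar\iota^*$ and the first identity of Lemma~\ref{lem:poissonpreserve} with $-=P_j^\DR$ gives
\[
[P_j^{\DR,\Gamma},P_i^{\DR,\Gamma}]^\Gamma=\bar\iota^*[P^\DR_j,P^\DR_i]=0.
\]

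\textbf{Part 2.} I would start from the ambient bihamiltonian recursion proved in \cite{BRS} for CohFTs, written as
\[
[[P_2^\DR,\overline g_{\alpha,p}],-]=\bigl(p+\tfrac{3-d}{2}+|\phi_\alpha|\bigr)[[P_1^\DR,\overline g_{\alpha,p+1}],-]+(\cdots)[[P_1^\DR,\overline g_{\beta,p}],-],
\]
and specialize to $\alpha=\alpha^\prime$. In the last coefficient, $\Lambda_{0,3}(\eta^{\beta\mu}\phi_\mu,\phi_{\alpha^\prime},r^\zeta\phi_\zeta)$ vanishes for $\beta\in J$ by Lemma~\ref{lem:equi2vanish} and \eqref{eq:deg}, so only $\beta^\prime\in I$ terms remain. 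Recall $\{F,G\}_P=-[F,[P,G]]$. Applying $\bar\iota^*$, the third identity of Lemma~\ref{lem:poissonpreserve} converts each ambient bracket into the reduced one. The second identity, together with the compatibility $\bar\iota^*[P_i,\overline g]=[P^\Gamma_i,\overline g^\Gamma]^\Gamma$, identifies the inner brackets, using the identifications from the first step. Since $\bar\iota^*$ is surjective onto $\hat{\mathscr F}^{\Gamma,0}$, the recursion holds on all of $\hat{\mathscr F}^{\Gamma,0}$.

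The main obstacle I expect is the $P_2$ identification. One has to confirm that restriction commutes with $\partial_{u^{\nu^\prime,s}}\circ\delta/\delta u^{\mu^\prime}$, and that no mixed $J$-terms survive. If the ambient recursion in \cite{BRS} relies on loop gluing, I would instead rederive it combinatorially from $\overline g$ and homogeneity, as was done for $[P_1,P_2]=0$.
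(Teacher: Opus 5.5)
Your proposal is correct and follows the paper's proof. You identify $\bar\iota^*(P_i^{\DR})=P_i^{\DR,\Gamma}$ and $\bar\iota^*(\overline{g}_{\alpha^\prime,p})=\overline{g}^\Gamma_{\alpha^\prime,p}$ (the paper takes this as immediate from the definitions; you check it in more detail), push the ambient identities through $\bar\iota^*$ via Lemma~\ref{lem:poissonpreserve}, drop the $\beta\in J$ terms, and conclude by surjectivity of $\bar\iota^*$. Two corrections: the ambient recursion you need is the semisimple result \cite{BR2}, Theorem~3.4 (Theorem~\ref{thm:ssbiham}), not \cite{BRS}; and because it is applied to the ambient genuine CohFT, your fallback about loop gluing is unnecessary.
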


Our proof of Theorem~\ref{thm:main} relies on the following result, which says that the ambient DR hierarchy is bihamiltonian.
\begin{thm}[\cite{BR2}, Theorem 3.4]
    For the ambient DR hierarchy, we have
    \begin{enumerate}[label=$\arabic*.$,ref=$\arabic*$]
        \item The pair $(P^{\DR}_1,P^{\DR}_2)$ is a bihamiltonian structure.
        \item The following bihamiltonian recursion relation holds on $\hat{\mathscr{F}}^{0}$:
        \begin{equation*}
            \{-,\overline{g}_{\alpha,p}\}_{P^{\DR}_2}=\left(p+\frac{3-d}{2}+|\phi_{\alpha}|\right)\{-,\overline{g}_{\alpha,p+1}\}_{P^{\DR}_1}+\Lambda_{0,3}(\eta^{\beta\mu}\phi_{\mu},\phi_{\alpha},r^{\zeta}\phi_{\zeta})\{-,\overline{g}_{\beta,p}\}_{P^{\DR}_1}.
        \end{equation*}
    \end{enumerate}
    \label{thm:ssbiham}
\end{thm}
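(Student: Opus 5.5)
The proof will not work directly with the DR side. It will transport the bihamiltonian structure of the Dubrovin--Zhang hierarchy through the DR/DZ equivalence of \cite{BLS}. Two parts of item~1 are already available. The identity $[P_1^\DR,P_1^\DR]=0$ holds because $\mathscr{P}_1^{\DR}=\eta\partial$ has constant coefficients. The identity $[P_1^\DR,P_2^\DR]=0$ is the result of \cite{BRS} recalled after \eqref{eq:drhamiltonianstructure2}. What remains to prove is $[P_2^\DR,P_2^\DR]=0$ and the recursion in item~2, and here semisimplicity enters.

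\textbf{Step 1: the DZ side.} For the semisimple Frobenius manifold defined by $F$, the DZ hierarchy carries a bihamiltonian pencil $(P_1^\DZ,P_2^\DZ)$ with Hamiltonians $\overline{h}_{\alpha,p}$. The pencil is obtained from the dispersionless pencil (flat metric and intersection form) by the quasi-Miura transformation built from the genus expansion $\mathcal{F}_g$. Polynomiality of this pencil is the theorem of Buryak--Posthuma--Shadrin. On this side the recursion
\[
\{-,\overline{h}_{\alpha,p}\}_{P_2^\DZ}=\Bigl(p+\tfrac{3-d}{2}+|\phi_\alpha|\Bigr)\{-,\overline{h}_{\alpha,p+1}\}_{P_1^\DZ}+\Lambda_{0,3}(\eta^{\beta\mu}\phi_\mu,\phi_\alpha,r^\zeta\phi_\zeta)\{-,\overline{h}_{\beta,p}\}_{P_1^\DZ}
\]
holds in genus zero, because it is the standard recursion for the principal hierarchy. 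It persists to all genera because the quasi-Miura transformation intertwines both brackets and the Hamiltonians.

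\textbf{Step 2: transport and identification.} Let $M$ be the Miura transformation of \cite{BLS} that maps the DZ hierarchy to the DR hierarchy. It sends $P_1^\DZ$ to $P_1^\DR$ and, up to constants and Casimirs, sends $\overline{h}_{\alpha,p}$ to $\overline{g}_{\alpha,p}$. Let $Q:=M_*P_2^\DZ$. Then $Q$ is automatically a Hamiltonian structure compatible with $P_1^\DR$, and item~2 holds with $Q$ in place of $P_2^\DR$. So it suffices to prove $Q=P_2^\DR$.

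To prove this, I would compare the two operators through their action on the flows. Take $p=0$ and $\alpha=1$ in the recursion. Both $Q$ and $P_2^\DR$ satisfy
\[
P_2\,\delta\overline{g}_{1,0}=\Bigl(\tfrac{3-d}{2}\Bigr)\eta\partial\,\delta\overline{g}_{1,1}+\dots
\]
For $Q$ this is Step~1. For $P_2^\DR$ I would verify it directly from \eqref{eq:drhamiltonianstructure2}, using $\overline{g}_{1,0}=\int\frac12\eta_{\alpha\beta}u^\alpha u^\beta$ together with the string and dilaton type identities for $\overline{g}_{\alpha,1}$ established in \cite{Bur,BRS}. Beyond this, both operators are homogeneous of the same degree with respect to the Euler grading. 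Both are also compatible with $P_1^\DR$. Their difference $D=Q-P_2^\DR$ is therefore a $d_{P_1}$-cocycle of degree $\ge 1$ in $\varepsilon$, because the genus-zero parts agree: both reduce to the intersection form.

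The cohomology $H^2_{\ge1}$ of $d_{P_1}$ with constant $P_1$ vanishes. Hence $D=[P_1^\DR,X]$ for some $X\in\hat{\mathscr{F}}^1$. In addition, $D$ must annihilate all the relations coming from the recursion, and both operators are expressed through the same $\overline{g}$. My plan is to show that these conditions force $[P_1^\DR,X]=0$. For this I would use homogeneity, since $X$ has the same Euler weight as a vector field of a specific degree. I would also use the string symmetry $\partial/\partial u^1$, which both structures respect.

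\textbf{Main obstacle.} The hard step is this final uniqueness statement, that $Q$ coincides on the nose with the explicit $P_2^\DR$. If the cohomological argument proves too weak, my fallback is a direct route. I would show that the DR operator $P_2^\DR$ is characterized by the identity
\[
\mathscr{P}_2^{\DR}\,\frac{\delta\overline{g}_{\alpha,p}}{\delta u}=\text{(right side of the recursion)}
\]
for all $\alpha,p$. Then I would check that the same identity holds for $Q$. Since the variational derivatives $\{\delta\overline{g}_{\alpha,p}/\delta u\}$ of a nondegenerate semisimple hierarchy separate differential operators, with enough independent ones in each degree, the two operators would then agree. Once $Q=P_2^\DR$ is established, $[P_2^\DR,P_2^\DR]=0$ and item~2 both follow immediately by transport.
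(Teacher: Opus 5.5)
The paper does not prove this statement; it quotes it from \cite{BR2}, with the compatibility $[P_1^\DR,P_2^\DR]=0$ taken from \cite{BRS}. Your transport step is fine. The structure $Q:=M_*P_2^\DZ$ is Hamiltonian, compatible with $P_1^\DR$, and satisfies the recursion. This needs the exact identity $\overline{g}_{\alpha,p}=\overline{h}_{\alpha,p}\circ M$, which the matching of tau structures supplies, and not equality ``up to Casimirs'': a $P_1^\DR$-Casimir $C$ would add $\{-,C\}_Q$, generally nonzero, to the $P_2$-side. But transport only reduces the theorem to the identity $Q=P_2^\DR$. By your own separation remark, that identity is equivalent to item~2 for the explicit operator \eqref{eq:drhamiltonianstructure2}. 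It is the whole content of the statement beyond \cite{BRS}, and your proposal does not establish it.

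The cohomological argument cannot close this gap. Once $D=Q-P_2^\DR=[P_1^\DR,X]$, compatibility with $P_1^\DR$ is automatic and carries no information. Your other checks give only three facts about $D$: it has no genus-zero part, it has the Euler weight of $P_2^\DR$, and it does not depend on $u^1$. The only recursion instances you can verify directly on \eqref{eq:drhamiltonianstructure2} are $p=-1$, which holds by construction, and $(\alpha,p)=(1,0)$, where $\delta\overline{g}_{1,0}/\delta u^\mu=\eta_{\mu\nu}u^\nu$. These only say that $D$ annihilates constant covectors and $\eta_{\beta\nu}u^\nu$. You also cannot add $[P_2^\DR,P_2^\DR]=0$ as a constraint, since it is part of what is to be proved.

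These conditions do not force $D=0$. Take $\mathbb{P}^1_{2,2,2,2}$, where $|\phi_1|+|\phi_2|+|\phi_3|=1$, and let $\epsilon_{\kappa\lambda\nu}$ be the Levi-Civita symbol supported on $\{1,2,3\}$. Define
\begin{equation*}
D^{\alpha\beta}=\varepsilon^2\,\eta^{\alpha\kappa}\,\partial\circ\epsilon_{\kappa\lambda\nu}u^{\nu,1}\circ\partial\,\eta^{\lambda\beta}.
\end{equation*}
This operator has the following properties:
\begin{itemize}
\item it is nonzero and antisymmetric;
\item it is a nonzero multiple of $[P_1^\DR,X]$ for the non-Hamiltonian vector field $X^\alpha=\varepsilon^2\eta^{\alpha\kappa}\partial\bigl(A_{\kappa\mu}u^{\mu,1}\bigr)$ with $A=u^2\,du^1\wedge du^3$ (non-Hamiltonian because $dA\neq 0$);
\item it is homogeneous of the weight of $P_2^\DR$, because $|\phi_\kappa|+|\phi_\lambda|+|\phi_\nu|=1$;
\item it involves only $u^{\nu,1}$, so it does not depend on $u^1$;
\item it kills constants and $\eta_{\beta\nu}u^\nu$, because $\epsilon_{\kappa\lambda\nu}u^{\lambda,1}u^{\nu,1}=0$.
\end{itemize}
So $P_2^\DR$ and $P_2^\DR+D$ pass every test you propose.

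Your fallback is circular. The separation argument itself is fine, but applying it to $Q-P_2^\DR$ presupposes that $P_2^\DR$ satisfies the recursion for all $(\alpha,p)$, and that is item~2.

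What is missing is an independent all-genus link between the explicit formula \eqref{eq:drhamiltonianstructure2} and the DZ side. This could be a direct proof of the full recursion for $P_2^\DR$, or a formula for $M_*P_2^\DZ$ in terms of $\overline{g}$ that can be matched with \eqref{eq:drhamiltonianstructure2}. This is exactly what the paper imports from \cite{BR2}.
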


\begin{proof}[Proof of Theorem~\ref{thm:main}]
    By definition, we have
    \begin{align*}
        &\overline{\iota}^*(P^\DR_i)=P^{\DR,\Gamma}_i,\quad i=1,2,\\
        &\overline{g}^\Gamma_{\alpha^\prime,p}=\overline{\iota}^*(\overline{g}_{\alpha^\prime,p}),\quad\alpha^\prime\in I,\,p\geq 0.
    \end{align*}
By Lemma~\ref{lem:poissonpreserve} and Theorem~\ref{thm:ssbiham}, we have
    \begin{equation*}
        [P_i^{\DR,\Gamma},P_j^{\DR,\Gamma}]^\Gamma=\overline{\iota}^*[P_i^\DR,P_j^\DR]=0,\quad\forall 1\leq i,j\leq 2.
    \end{equation*}
    The bihamiltonian recursion relation in Theorem~\ref{thm:ssbiham} is equivalent to
    \begin{align*}
        [-,[P^\DR_2,\overline{g}_{\alpha,p}]]=&\left(p+\frac{3-d}{2}+|\phi_{\alpha}|\right)[-,[P^\DR_1,\overline{g}_{\alpha,p+1}]]\\
        &+\Lambda_{0,3}(\eta^{\beta\mu}\phi_{\mu},\phi_{\alpha},r^{\zeta}\phi_{\zeta})[-,[P^\DR_1,\overline{g}_{\beta,p}]],
    \end{align*}
    on $\hat{\mathscr{F}}^0$. Applying $\overline{\iota}^*$ to both sides, we obtain the following identity on $\hat{\mathscr{F}}^0$:
    \begin{align*}
        &[\overline{\iota}^*(-),[P^{\DR,\Gamma}_2,\overline{g}^\Gamma_{\alpha^\prime,p}]^\Gamma]^\Gamma\\
        =&\,[\overline{\iota}^*(-),[\overline{\iota}^*(P^\DR_2),\overline{\iota}^*(\overline{g}_{\alpha^\prime,p})]^\Gamma]^\Gamma
        =\overline{\iota}^*\left([(-),[P_2^\DR,\overline{g}_{\alpha^\prime,p}]]\right)\\
        =&\left(p+\frac{3-d}{2}+|\phi_{\alpha^\prime}|\right)[\overline{\iota}^*(-),[P^{\DR,\Gamma}_1,\overline{g}^\Gamma_{\alpha^\prime,p+1}]^\Gamma]^\Gamma\\
        &\quad+\Lambda_{0,3}(\eta^{\beta\mu}\phi_{\mu},\phi_{\alpha^\prime},r^{\zeta^\prime}\phi_{\zeta^\prime
        })[\overline{\iota}^*(-),[P^{\DR,\Gamma}_1,\overline{\iota}^*(\overline{g}_{\beta,p})]^\Gamma]^\Gamma\\
        =&\left(p+\frac{3-d}{2}+|\phi_{\alpha}|\right)[\overline{\iota}^*(-),[P^{\DR,\Gamma}_1,\overline{g}^\Gamma_{\alpha^\prime,p+1}]^\Gamma]^\Gamma\\
        &\quad+\Lambda_{0,3}(\eta^{\beta^\prime\mu^\prime}\phi_{\mu^\prime},\phi_{\alpha^\prime},r^{\zeta^\prime}\phi_{\zeta^\prime
        })[\overline{\iota}^*(-),[P^{\DR,\Gamma}_1,\overline{g}^\Gamma_{\beta^\prime,p}]^\Gamma]^\Gamma,\quad\alpha^\prime\in I.
    \end{align*}
    We conclude by noting that $\overline{\iota}^*$ is surjective.
\end{proof}

\subsection{Reduction of DZ hierarchy and DR/DZ equivalence}
The DZ hierarchy can be constructed from the given semisimple CohFT $\{\Lambda_{g,n}\}$ as follows \cite{BPS1,BPS2,DZ3}. Consider the following functions on the big phase space $H^\infty$
\begin{equation}
    w^{\tp,\alpha,s}=\left(\frac{\partial}{\partial t^{1,0}}\right)^s\left(\eta^{\alpha\beta}\frac{\partial^2\mathcal{F}}{\partial t^{1,0}\partial t^{\beta,0}}\right)\in\mathcal{O}_{H^\infty}.
    \label{eq:dznormalcoor}
\end{equation}
Recall that $\mathcal{F}$ satisfies the string equation
\begin{equation}
    \frac{\partial\mathcal{F}}{\partial t^{1,0}}=\sum_{s\geq 0}t^{\alpha,p+1}\frac{\partial\mathcal{F}}{\partial t^{\alpha,p}}+\frac{1}{2}\eta_{\alpha\beta}t^{\alpha,0}t^{\beta,0}.
    \label{eq:stringeqn}
\end{equation}
This implies that
\begin{equation*}
    (w^{\tp,\alpha,s}-\delta^\alpha_1\delta^s_1)-t^{\alpha,s}\in\mathcal{O}_{H^\infty}^{\geq s+1}.    
\end{equation*}
Define $\mathscr{A}^{\wk}_w$ in the same way as in \eqref{eq:weak}. It contains $\mathbb{\hat{\mathscr{A}}}^0_w$ as a subalgebra. The induced coordinate system $(w^{\alpha,s};\sigma_\alpha^s)$ is called the DZ normal coordinate system. We can also define the following isomorphism of rings
\begin{equation}
    \mathscr{A}^{\wk}_w\rightarrow\mathcal{O}_{H^\infty},\quad w^{\alpha,s}-\delta^\alpha_1\delta^s_1\mapsto w^{\tp,\alpha,s}-\delta^\alpha_1\delta^s_1.
    \label{eq:dznormal2bigphase}
\end{equation}
There is a unique element $\Omega_{\alpha,p;\beta,q}\in\hat{\mathscr{A}}_w^0\subset\mathscr{A}^{\wk}_w$ such that
\begin{equation*}
    \Omega^\DZ_{\alpha,p;\beta,q}\Big\vert_{w^{\bullet,\bullet}\rightarrow w^{\tp,\bullet,\bullet}}=\frac{\partial^2\mathcal{F}}{\partial t^{\alpha,p}\partial t^{\beta,q}}
\end{equation*}
for all $1\leq\alpha,\beta\leq N$ and $p,q\geq 0$ (see \cite{BPS1}).

We define a $\Gamma$-action on $\hat{{\mathscr{A}}}_w$ by requiring that the DZ normal coordinate system is a linearized coordinate system, i.e.
\begin{equation*}
    \gamma._w w^{\alpha,s}=\act(\gamma^{-1})^\alpha_\beta w^{\beta,s},\quad\gamma._w\sigma_\alpha^s=\act(\gamma)_\alpha^\beta\sigma_\beta^s.
\end{equation*}
To distinguished it from the previous $\Gamma$-action, we will call it the $\Gamma_w$-action and use the notation $._w$ for this action. Define $\iota^*_w$ to be the map setting all variables with indices in $J$ to zero, in analogy with the map $\iota^*$ in a $\Gamma$-linearized coordinate system. The $\Gamma_w$-action on $\hat{\mathscr{A}}_w^0$ can be extended naturally to $\mathscr{A}^{\wk}_w$. One can check that the $\Gamma_w$-action makes the isomorphism $\eqref{eq:dznormal2bigphase}$ $\Gamma_w$-equivariant. 

\begin{defn}
    The Dubrovin-Zhang (DZ) hierarchy associated with the semisimple CohFT $\{\Lambda_{g,n}\}$ is defined by
    \begin{equation}
        \frac{\partial w^\alpha}{\partial t^{\beta,p}}=\eta^{\alpha\mu}\partial\Omega^\DZ_{\mu,0;\beta,p},\quad 1\leq\alpha,\beta\leq N,\, p\geq 0.
        \label{eq:dz}
    \end{equation}
\end{defn}
There is a solution to the DZ hierarchy given by:
\begin{equation*}
    w^{\alpha}:=w^{\tp,\alpha}\Big\vert_{t^{1,0}\rightarrow t^{1,0}+x}.
\end{equation*}

Although the DZ hierarchy is defined only for semisimple CohFTs, we can still define an analogue for the numerical partial CohFT $\{\Lambda_{g,n}^\Gamma\}$.
\begin{defn}
    The DZ hierarchy associated with $\{\Lambda^\Gamma_{g,n}\}$ is defined by
    \begin{equation}
        \frac{\partial w^{\alpha^\prime}}{\partial t^{\beta^\prime,p}}=\eta^{\alpha^\prime\mu^\prime}\partial\Omega^{\Gamma,\DZ}_{\mu^\prime,0;\beta^\prime,p},\quad \alpha^\prime,\beta^\prime\in I,\, p\geq 0.
        \label{eq:reduceddz}
    \end{equation}
    where $\Omega^{\Gamma,\DZ}_{\mu^\prime,0;\beta^\prime,p}=\iota^*_w\Omega^{\DZ}_{\mu^\prime,0;\beta^\prime,p}$
\end{defn}

As before, we call the DZ hierarchy associated with $\{\Lambda_{g,n}\}$ the ambient hierarchy and the one associated with $\{\Lambda^\Gamma_{g,n}\}$ the reduced hierarchy.

\begin{lem}
    The flows of the reduced DZ hierarchy \eqref{eq:reduceddz} are commutative.
\end{lem}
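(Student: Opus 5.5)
The approach is to deduce commutativity of the reduced flows \eqref{eq:reduceddz} from that of the ambient DZ flows \eqref{eq:dz}. The deduction goes through the $\Gamma_w$-equivariant isomorphism \eqref{eq:dznormal2bigphase} and the vanishing in Lemma~\ref{lem:trick}, in the same way the reduced DR hierarchy was handled through the DR normal coordinates.

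First, $\mathcal{F}$ is $\Gamma$-invariant on $\mathcal{O}_{H^\infty}$ by \eqref{eq:nsym}, since the correlators are integrals of $\Lambda_{g,n}$ against monomials of $\psi$-classes, which are tautological. Because \eqref{eq:dznormal2bigphase} is $\Gamma_w$-equivariant and $\Omega^{\DZ}_{\alpha,p;\beta,q}$ is uniquely determined, we get
\begin{equation*}
\gamma._w\Omega^{\DZ}_{\alpha,p;\beta,q}=\act(\gamma)^{\mu}_{\alpha}\act(\gamma)^{\nu}_{\beta}\Omega^{\DZ}_{\mu,p;\nu,q}.
\end{equation*}
Expanding $\Omega^{\DZ}_{\mu',0;\beta',p}$ with $\mu',\beta'\in I$ as a power series in the $w^{\alpha,s}$ and applying Lemma~\ref{lem:trick} to its coefficients, as in the proof of Lemma~\ref{lem:trickapply}, gives
\begin{equation*}
\iota_w^*\Bigl(\frac{\partial \Omega^{\DZ}_{\mu',0;\beta',p}}{\partial w^{\alpha'',s}}\Bigr)=0,\quad \alpha''\in J.
\end{equation*}
The same argument shows $\iota_w^*\Omega^{\DZ}_{\mu'',0;\beta',p}=0$ for $\mu''\in J$. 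Since $\eta$ is block diagonal by Lemma~\ref{lem:equi2vanish}, this implies that the ambient flows $\partial/\partial t^{\beta',p}$, written as evolutionary derivations $D_{\beta',p}$ on $\hat{\mathscr{A}}^0_w$, preserve the ideal $\ker\iota^*_w$. Hence they descend to derivations $D^\Gamma_{\beta',p}$ on $\hat{\mathscr{A}}^{\Gamma,0}_w$ with $\iota_w^*\circ D_{\beta',p}=D^\Gamma_{\beta',p}\circ\iota_w^*$.

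Next we identify $D^\Gamma_{\beta',p}$ with the reduced flow. On the generator $w^{\alpha'}$, the descended derivation $D^\Gamma_{\beta',p}$ acts by $\iota_w^*(\eta^{\alpha'\mu'}\partial\Omega^{\DZ}_{\mu',0;\beta',p})$, which equals $\eta^{\alpha'\mu'}\partial\,\Omega^{\Gamma,\DZ}_{\mu',0;\beta',p}$ because $\iota_w^*$ commutes with $\partial$. For the $w^{\alpha'',s}$ one checks, as above, that $D_{\beta',p}(w^{\alpha''})\in\ker\iota_w^*$; this is the key equivariance point. Consequently, for $f\in\hat{\mathscr{A}}^0_w$ the value $\iota_w^*D_{\beta',p}f$ depends only on $\iota_w^*f$, and it equals the reduced evolutionary derivation applied to $\iota_w^*f$. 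This uses the chain rule together with the vanishing $\iota_w^*(\partial f/\partial w^{\alpha'',s})\cdot\iota_w^*(D_{\beta',p}w^{\alpha'',s})=0$, the second factor being zero.

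Finally, the ambient flows commute, i.e.\ $[D_{\beta',p},D_{\gamma',q}]=0$. Applying $\iota_w^*$ and using the intertwining relation gives $[D^\Gamma_{\beta',p},D^\Gamma_{\gamma',q}]\circ\iota_w^*=0$. Since $\iota_w^*$ is surjective, the reduced flows commute. Ambient commutativity is standard for the DZ hierarchy of a semisimple CohFT \cite{DZ3,BPS1}. It can also be seen from the tau-symmetry $\partial_{t^{\gamma,q}}\Omega^{\DZ}_{\mu,0;\beta,p}=\partial_{t^{\beta,p}}\Omega^{\DZ}_{\mu,0;\gamma,q}$, which holds on the topological solution and hence identically through \eqref{eq:dznormal2bigphase}.

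The main obstacle is the second step. One has to make sure the equivariance argument applies to the full Taylor coefficients in the weak completion $\mathscr{A}^{\wk}_w$ rather than only to $\hat{\mathscr{A}}^0_w$, since the shift $\delta^\alpha_1\delta^s_1$ enters there. The plan is to use that $\phi_1\in H^\Gamma$, so the shift is $\Gamma_w$-invariant, and then apply Lemma~\ref{lem:trick} coefficientwise to the shifted variables.
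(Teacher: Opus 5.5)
Your proposal is correct and follows the paper's route. For $\beta'\in I$ the ambient derivations $D_{\beta',p}$ send each $w^{\alpha'',s}$ into $\ker\iota_w^*$, so they descend to derivations on $\hat{\mathscr{A}}^{\Gamma,0}_w$ that coincide with the reduced flows, and these commute because $\iota_w^*$ is surjective.

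The only difference is in how the key vanishing $\iota_w^*D_{\beta',p}(w^{\alpha'',s})=0$ is obtained. The paper checks it on the big phase space via \eqref{eq:dznormal2bigphase} and Lemma~\ref{lem:equi2vanish}. You derive it on the jet side from the $\Gamma_w$-equivariance of $\Omega^{\DZ}$ and Lemma~\ref{lem:trick}, which uses the same invariance input. The weak-completion issue you flag is harmless, since the equivariance identity already holds in $\hat{\mathscr{A}}^0_w$, where the action is linear.
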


\begin{proof}
    Note that each flow $\frac{\partial}{\partial t^{\beta,p}}$ of the ambient DZ hierarchy define a derivation $D_{\beta,p}$ on $\hat{\mathscr{A}}^0_w$ via 
    \begin{equation*}
        \frac{\partial}{\partial t^{\beta,p}}\circ\partial=\partial\circ\frac{\partial}{\partial t^{\beta,p}}.
    \end{equation*}
    The commutativity of the ambient DZ hierarchy implies that the derivations $D_{\beta,p}$ are mutually commuting. The ideal $\hat{\mathscr{I}}_w\cap\hat{\mathscr{A}}^0$ of $\hat{\mathscr{A}}^0$ is generated by $w^{\alpha^\pprime,s},\,\alpha^\pprime\in J$, where $\hat{\mathscr{I}}_w$ is the kernel of $\iota^*_w$. By Lemma~\ref{lem:equi2vanish}, we have
    \begin{align*}
        &\iota^*_w\left(D_{\beta^\prime,p}(w^{\alpha^\pprime,s})\right)\Big\vert_{w^{\bullet,\bullet}\rightarrow w^{\tp,\bullet,\bullet}}\\
        =&\,\iota^{\infty,*}\left(\eta^{\alpha^\pprime\mu^\pprime}\left(\frac{\partial}{\partial t^{1,0}}\right)^{s+1}\frac{\partial^2 \mathcal{F}}{\partial t^{\beta^\prime,p}\partial t^{\mu^\pprime,0}}\right)\\
        =&\,\eta^{\alpha^\pprime\mu^\pprime}\sum_{g,n\geq 0}\frac{\varepsilon^{2g}}{n !}\sum_{p_1,\ldots,p_n\geq 0}\langle\left(\tau_0(\phi_1)\right)^{s+1}\tau_{p}(\phi_{\beta^\prime})\tau_0(\phi_{\mu^\pprime})\tau_{p_1}(\phi_{\alpha_1^\prime})\cdots\tau_{p_n}(\phi_{\alpha_n^\prime})\rangle t^{\alpha^\prime_1,p_1}\cdots t^{\alpha^\prime_n,p_n}\\
        =&\,0,\qquad\forall \alpha^\pprime\in J,\,\beta^\prime\in I,\, s,p\geq 0.
    \end{align*}
    Applying the isomorphism \eqref{eq:dznormal2bigphase}, we have $D_{\beta^\prime,p}(\hat{\mathscr{I}}_w\cap\hat{\mathscr{A}}^0)\subset\hat{\mathscr{I}}_w\cap\hat{\mathscr{A}}^0,\,\forall\beta^\prime\in I$. So each derivation $D_{\beta^\prime,p}$ descends to a derivation $D_{\beta^\prime,p}^\Gamma$ on $\hat{\mathscr{A}}^{\Gamma,0}_w$. Since the map $\iota^*_w$ is surjective, the operators $D^\Gamma_{\beta^\prime,p}$ are determined by
    \begin{equation*}
        \iota^*_w\circ D_{\beta^\prime,p}=D^\Gamma_{\beta^\prime,p}\circ\iota_w^*,\quad\beta^\prime\in I,\,p\geq 0.
    \end{equation*}
    The fact that the operators $\{D_{\beta^\prime,p}\}$ commute implies that the operators $\{D_{\beta^\prime,p}^\Gamma\}$ commute with each other. It is straightforward to check that the derivations defined by the flows of the reduced DZ hierarchy \eqref{eq:reduceddz} coincide with $D_{\beta^\prime,p}^\Gamma$.
\end{proof}

It has recently been proved that for a semisimple CohFT, the DR hierarchy and the DZ hierarchy are related by a Miura-type transformation of the 2nd kind.
\begin{thm}[\cite{BLS}, Theorem 1.2]
    There is a Miura-type transformation of the 2nd kind $u^\alpha\mapsto w^\alpha$ transforming the ambient DR hierarchy to the ambient DZ hierarchy.
    \label{thm:drdz}
\end{thm}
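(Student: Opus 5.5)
This statement is quoted from \cite{BLS}, and in the present paper it is used as a black box. Still, here is the route I would take to establish it, following the strategy of Buryak--Dubrovin--Gu\'er\'e--Rossi. The idea is to compare the two theories on the big phase space, where both hierarchies are encoded by their potentials, rather than comparing the equations directly.

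\textbf{Step 1 (reduce to the normal coordinates).} The transformation \eqref{eq:dr2normal} is already a Miura-type transformation of the 2nd kind from $u$ to the DR normal coordinates $\widetilde{u}$. In these coordinates the ambient DR hierarchy takes the form \eqref{eq:normaldr}, which is structurally identical to the DZ hierarchy \eqref{eq:dz} in the normal coordinates $w$. So it suffices to produce a Miura-type transformation of the 2nd kind $\widetilde{u}\mapsto w$ under which $\Omega^\DR_{\mu,0;\beta,p}$ is carried to $\Omega^\DZ_{\mu,0;\beta,p}$ up to a total $\partial$-derivative that is compatible with the flows.

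\textbf{Step 2 (reformulate as a statement about potentials).} I would aim to show that there is a differential polynomial $\mathcal{P}=\sum_{g\ge1}\varepsilon^{2g}\mathcal{P}_g(\widetilde{u})$, with $\mathcal{P}_g\in\hat{\mathscr{A}}^0_{\widetilde{u},2g-2}$, such that
\begin{equation*}
\mathcal{F}=\mathcal{F}^\DR+\mathcal{P}\big\vert_{\widetilde{u}^{\bullet,\bullet}\rightarrow\widetilde{u}^{\str,\bullet,\bullet}} .
\end{equation*}
This is the ``strong'' DR/DZ equivalence. Given it, I would apply $\partial_{t^{1,0}}\partial_{t^{\beta,0}}$ and use the string equation for both $\mathcal{F}$ and $\mathcal{F}^\DR$. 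This yields
\begin{equation*}
w^{\alpha}=\widetilde{u}^{\alpha}+\eta^{\alpha\beta}\partial\,\frac{\partial \mathcal{P}}{\partial t^{\beta,0}},
\end{equation*}
and the $t^{\beta,0}$-derivative of a differential polynomial evaluated on a solution is again a differential polynomial in $\widetilde{u}$, because the flows are given by differential polynomials. This is evidently a Miura-type transformation of the 2nd kind. Differentiating the displayed identity for $\mathcal{F}$ along any $t^{\beta,p}$ then shows that $w$ satisfies \eqref{eq:dz} whenever $\widetilde{u}$ satisfies \eqref{eq:normaldr}, since the two-point functions match up to $\partial$ of differential polynomials.

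\textbf{Step 3 (the main obstacle: existence of $\mathcal{P}$).} The difficulty lies entirely in proving the strong equivalence. I would first use semisimplicity and Givental--Teleman reconstruction to write $\mathcal{F}$ via the $R$-matrix action. I would then use the geometric formula of \cite{BDGR2} for DR correlators, which expresses them through $\DR_g\lambda_g$ together with graph sums.

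It is known that the existence of $\mathcal{P}$ is equivalent to the vanishing of all DR-type correlators ``of non-polynomial nature'': correlators with at least $2g-1$ or so insertions in excess of the allowed degree. My first move would be to reduce this vanishing to a family of tautological relations, namely the vanishing of certain combinations of $\DR_g$, $\lambda_g$ and $\psi$-classes of high degree after pairing with an arbitrary CohFT. I would then prove these relations via the Pixton-type formula for $\DR_g$ of \cite{JPPZ} combined with $\lambda_g$-vanishing on curves with non-separating nodes. Alternatively, following \cite{BLS}, I would check them on the $\Omega$-class / Chiodo-class families, which span enough CohFTs by Teleman's classification.

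Finally, the homogeneity axiom \ref{item:homo} would fix the degrees, ensuring that $\mathcal{P}_g$ has standard degree $2g-2$.
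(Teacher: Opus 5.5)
The paper does not prove this statement. It imports it from \cite{BLS}, and afterwards uses only its reformulation through \cite{BDGR1}, Propositions 7.2 and 7.4: the existence of the differential polynomial $\mathcal{P}$ in \eqref{eq:defofp}, and the description of the transformation as \eqref{eq:dr2normal} followed by the inverse of \eqref{eq:dz2drnormal}. Your Steps 1 and 2 reproduce exactly this reduction. Your $\mathcal{P}$ is minus the paper's and is written in $\widetilde{u}$ instead of $w$. Your formula $w^\alpha=\widetilde{u}^\alpha+\eta^{\alpha\beta}\partial\,\partial_{t^{\beta,0}}\mathcal{P}$ is the same transformation as \eqref{eq:dz2drnormal}, with the roles of $w$ and $\widetilde{u}$ exchanged. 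This part is fine. One thing you should state explicitly: identities checked only along the particular solutions $\widetilde{u}^{\str}$ and $w^{\tp}$ do give identities of differential polynomials, and hence of flows for all solutions. The reason is that \eqref{eq:drnormal2bigphase} and \eqref{eq:dznormal2bigphase} are isomorphisms, so evaluation at the topological solution is injective. Your phrase ``the two-point functions match'' does not by itself give this.

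As a proof, however, the proposal has a genuine gap. Step 3, the existence of $\mathcal{P}$, is the entire content of \cite{BLS} (the strong DR/DZ equivalence conjecture of \cite{BDGR1}), and nothing in it is an argument.

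\begin{itemize}
\item The reformulation you invoke is only gestured at (``$2g-1$ or so''). The actual one normalizes $\mathcal{P}$ by requiring the correlators with $\sum d_i\le 2g-2$ to vanish, as DR correlators do. It then reduces the existence of $\mathcal{P}$ to an explicit infinite family of tautological relations built from $\DR_g$, $\lambda_g$, $\psi$-classes and graph sums (Buryak--Gu\'er\'e--Rossi). Proving those relations is what has to be done.
\item Your first route combines the formula of \cite{JPPZ} with the vanishing of $\lambda_g$ on $\delta_{\mathrm{irr}}$. This only reduces $\lambda_g\DR_g$ to tree-level expressions. It gives no reason why the relevant high-degree combinations vanish, and that is precisely where the difficulty lies.
\item Your alternative route is not valid as stated. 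Teleman's classification \cite{Tel} obtains a semisimple CohFT from its degree-zero part (a semisimple topological field theory) by the Givental $R$-matrix action and a translation. It does not say that $\Omega$-class or Chiodo-class families span the semisimple CohFTs. Checking a relation against a few special families of CohFTs does not establish it for all of them.
\item The standard degree $2g-2$ of $\mathcal{P}_g$ is not supplied by the Euler-field homogeneity \ref{item:homo}. It is governed by the genus grading coming from the dilaton equation, under which $w^{\tp,\alpha,s}$ has weight $s$ and $\varepsilon$ has weight $-1$.
\end{itemize}
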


\begin{rmk}
    The ambient DZ hierarchy admits a bihamiltonian structure \cite{DZ3,LWZ2}, which can be constructed directly. In \cite{BDGR1}, it was proved that the Miura-type transformation in Theorem~\ref{thm:drdz} preserves the bihamiltonian structures. Therefore, one can alternatively define the bihamiltonian structures for \eqref{eq:dz} by
    \begin{equation*}
        P_i^\DZ:=P_i^\DR\bigl(u(w)\bigr).
    \end{equation*}
\end{rmk}

This Miura-type transformation can be described explicitly. A priori, the function
\begin{equation*}
    \mathcal{F}^\DR-\mathcal{F}\in\mathcal{O}_{H^\infty}
\end{equation*}
corresponds to a unique element in $\mathscr{A}^{\wk}_w$ via the inverse of \eqref{eq:dznormal2bigphase}. From Proposition 7.2, Proposition 7.4 in \cite{BDGR1} and Theorem~\ref{thm:drdz}, it follows that there is a unique differential polynomial $\mathcal{P}\in\hat{\mathscr{A}}^0_w$ such that
\begin{equation}
    \mathcal{F}^\DR-\mathcal{F}=\mathcal{P}\Big\vert_{w^{\bullet,\bullet}\rightarrow w^{\tp,\bullet,\bullet}}.
    \label{eq:defofp}
\end{equation}
Now the Miura-type transformation in Theorem~\ref{thm:drdz} is the composition of \eqref{eq:dr2normal} and the inverse of
\begin{equation}
    w^{\alpha}\mapsto\widetilde{u}^\alpha=w^\alpha+\sum_{s\geq 0}\eta^{\alpha\mu}\eta^{\beta\nu}\partial\left(\frac{\partial\mathcal P}{\partial w^{\beta,s}}\partial^{s+1}\Omega^\DZ_{\nu,0;\mu,0}\right).
    \label{eq:dz2drnormal}
\end{equation}

\begin{thm}
    The Miura-type transformation \eqref{eq:dz2drnormal} is $\Gamma_w$-linearized. In particular, the $\Gamma_w$-action coincides with the original $\Gamma$-action on $\hat{\mathscr{A}}$, and we have $\iota^*_w=\iota^*$.
\end{thm}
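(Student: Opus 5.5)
Our plan is to verify \eqref{eq:linmiura} for the correction term of \eqref{eq:dz2drnormal} under the $\Gamma_w$-action. We then transport the conclusion to the original $\Gamma$-action using Lemma~\ref{lem:linearizedmiura}, since the DR normal coordinate system is already known to be $\Gamma$-linearized.

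\emph{Step 1: $\Gamma_w$-equivariance of the building blocks.} We first show that $\Omega^\DZ_{\alpha,p;\beta,q}$ and $\mathcal{P}$ transform tensorially under the $\Gamma_w$-action. Concretely, we aim for
\[
\gamma._w\Omega^\DZ_{\alpha,p;\beta,q}=\act(\gamma)_\alpha^\mu\act(\gamma)_\beta^\nu\Omega^\DZ_{\mu,p;\nu,q},\qquad \gamma._w\mathcal{P}=\mathcal{P}.
\]
Both statements are checked through the $\Gamma_w$-equivariant isomorphism \eqref{eq:dznormal2bigphase}, which is injective on $\hat{\mathscr{A}}^0_w$. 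For the first, the numerical symmetry \eqref{eq:nsym} (applied with $A$ a product of $\psi$-classes) shows that $\mathcal{F}$ is invariant under the $\Gamma$-action on $\mathcal{O}_{H^\infty}$. Hence $\partial^2\mathcal{F}/\partial t^{\alpha,p}\partial t^{\beta,q}$ transforms covariantly, and uniqueness of $\Omega^\DZ$ gives the claim. For the second, both $\mathcal{F}$ and $\mathcal{F}^\DR$ are $\Gamma$-invariant; the invariance of $\mathcal{F}^\DR$ is the invariance of DR correlators stated above. So $\mathcal{F}^\DR-\mathcal{F}$ is invariant, and uniqueness of $\mathcal{P}$ in \eqref{eq:defofp} gives $\gamma._w\mathcal{P}=\mathcal{P}$. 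Throughout we use that $\eta$ is $\Gamma$-invariant (Lemma~\ref{lem:equi2vanish}), i.e.\ $\eta^{\alpha\mu}$ transforms like $w^\alpha w^\mu$.

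\emph{Step 2: index bookkeeping.} Since $\gamma._w$ is an automorphism commuting with $\partial$ and $\gamma._w w^{\beta,s}=\act(\gamma^{-1})^\beta_\mu w^{\mu,s}$, the chain rule gives
\[
\gamma._w\frac{\partial\mathcal{P}}{\partial w^{\beta,s}}=\act(\gamma)^\nu_\beta\frac{\partial\mathcal{P}}{\partial w^{\nu,s}}.
\]
Combining this with Step 1 and the invariance of $\eta$, the contracted indices $\beta,\nu$ cancel pairwise. The free index $\alpha$ then picks up exactly $\act(\gamma^{-1})^\alpha_\beta$. This is \eqref{eq:linmiura}, so \eqref{eq:dz2drnormal} is $\Gamma_w$-linearized.

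\emph{Step 3: identification of the actions.} By definition, $(w;\sigma)$ is $\Gamma_w$-linearized. By Lemma~\ref{lem:linearizedmiura} applied to the $\Gamma_w$-action, the DR normal coordinates $(\widetilde{u};\widetilde{\theta})$ are then $\Gamma_w$-linearized. They are also $\Gamma$-linearized for the original action, as shown earlier. Two actions on $\hat{\mathscr{A}}$ that agree on a full coordinate system and commute with $\partial$ coincide, so $\Gamma_w=\Gamma$. Since $(w;\sigma)$ is then a $\Gamma$-linearized coordinate system, the ideal $\hat{\mathscr{I}}$ is generated by $w^{\alpha^\pprime,s},\sigma^s_{\alpha^\pprime}$, which gives $\iota^*_w=\iota^*$.

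The main obstacle is Step 1 for $\mathcal{P}$. It requires that $\mathcal{P}$ be determined inside $\hat{\mathscr{A}}^0_w$ purely by its image in $\mathcal{O}_{H^\infty}$, which is guaranteed by the injectivity of \eqref{eq:dznormal2bigphase}. One must also check that the $\Gamma$-action on $\mathcal{O}_{H^\infty}$ used for $\mathcal{F}^\DR$ is the same as the one making \eqref{eq:dznormal2bigphase} equivariant. I would check this equivariance carefully on the generators $w^{\tp,\alpha,s}$, using the invariance of $\eta$ and of $\phi_1$.
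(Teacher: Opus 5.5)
Your proposal is correct and follows essentially the same route as the paper. Like the paper, you pass through the $\Gamma_w$-equivariant isomorphism \eqref{eq:dznormal2bigphase}, use the $\Gamma$-invariance of $\mathcal{F}$ and $\mathcal{F}^\DR$ (hence of $\mathcal{P}$) together with the invariance of $\eta$ to verify \eqref{eq:linmiura}, and conclude with Lemma~\ref{lem:linearizedmiura}. Your Step 3 (the DR normal coordinates are both $\Gamma$- and $\Gamma_w$-linearized, so the two actions agree on a full set of generators) states the paper's rather terse final sentence more precisely.
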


\begin{proof}
    For an element $\gamma\in\Gamma$, we write $\gamma._w(-)$ for the $\Gamma_w$-action by $\gamma\in\Gamma$. To check the equation \eqref{eq:linmiura} in $\hat{\mathscr{A}}_w^0$, we may apply \eqref{eq:dznormal2bigphase} and verify it in $\mathcal{O}_{H^\infty}$ instead. Since $\mathcal{P}$ is invariant under the $\Gamma_w$-action by \eqref{eq:defofp}, it follows that
    \begin{equation*}
        \gamma._w\left(\frac{\partial\mathcal{P}}{\partial w^{\beta,s}}\right)=\act(\gamma)^{\xi}_\beta\left(\frac{\partial\mathcal{P}}{\partial w^{\xi,s}}\right),\quad\forall 1\leq\alpha\leq N,\, s\geq 0.
    \end{equation*}
    This yields that
    \begin{align*}
        &\gamma._w\left(\sum_{s\geq 0}\eta^{\alpha\mu}\eta^{\beta\nu}\partial\left(\frac{\partial\mathcal P}{\partial w^{\beta,s}}\partial^{s+1}\Omega^\DZ_{\nu,0;\mu,0}\right)\right)\Big\vert_{w^{\bullet,\bullet}\rightarrow w^{\tp,\bullet,\bullet}}\\
        =&\,\gamma.\left(\sum_{s\geq 0}\eta^{\alpha\mu}\eta^{\beta\nu}\partial\left(\frac{\partial\mathcal P}{\partial w^{\beta,s}}\partial^{s+1}\Omega^\DZ_{\nu,0;\mu,0}\right)\Big\vert_{w^{\bullet,\bullet}\rightarrow w^{\tp,\bullet,\bullet}}\right)\\
        =&\,\gamma.\left(\sum_{s\geq 0}\eta^{\alpha\mu}\eta^{\beta\nu}\Bigl(\frac{\partial}{\partial t^{1,0}}\Bigr)\left(\Bigl(\frac{\partial\mathcal P}{\partial w^{\beta,s}}\Bigr)\Big\vert_{w^{\bullet,\bullet}\rightarrow w^{\tp,\bullet,\bullet}}\times\Bigl(\frac{\partial}{\partial t^{1,0}}\Bigr)^{s+1}\Bigl(\frac{\partial^2\mathcal{F}}{\partial t^{\nu,0}\partial t^{\mu,0}}\Bigr)\right)\right)\\
        =&\sum_{s\geq 0}\eta^{\alpha\mu}\eta^{\beta\nu}\left(\frac{\partial}{\partial t^{1,0}}\right)\left(\act(\gamma)_\beta^\xi\Bigl(\frac{\partial\mathcal P}{\partial w^{\xi,s}}\Bigr)\Big\vert_{w^{\bullet,\bullet}\rightarrow w^{\tp,\bullet,\bullet}}\times\Bigl(\frac{\partial}{\partial t^{1,0}}\Bigr)^{s+1}\act(\gamma)_\nu^\delta\act(\gamma)_\mu^\lambda\Bigl(\frac{\partial^2\mathcal{F}}{\partial t^{\delta,0}\partial t^{\lambda,0}}\Bigr)\right)\\
        =&\sum_{s\geq 0}\left(\eta^{\alpha\mu}\act(\gamma)_\mu^\lambda\right)\left(\eta^{\beta\nu}\act(\gamma)_\beta^\xi\act(\gamma)_\nu^\delta\right)\\
        &\qquad\times\Bigl(\frac{\partial}{\partial t^{1,0}}\Bigr)\left(\Bigl(\frac{\partial\mathcal P}{\partial w^{\xi,s}}\Bigr)\Big\vert_{w^{\bullet,\bullet}\rightarrow w^{\tp,\bullet,\bullet}}\times\Bigl(\frac{\partial}{\partial t^{1,0}}\Bigr)^{s+1}\Bigl(\frac{\partial^2\mathcal{F}}{\partial t^{\delta,0}\partial t^{\lambda,0}}\Bigr)\right)\\
        =&\sum_{s\geq 0}\act(\gamma^{-1})^\alpha_\zeta\eta^{\zeta\lambda}\eta^{\xi\delta}\times\Bigl(\frac{\partial}{\partial t^{1,0}}\Bigr)\left(\Bigl(\frac{\partial\mathcal P}{\partial w^{\xi,s}}\Bigr)\Big\vert_{w^{\bullet,\bullet}\rightarrow w^{\tp,\bullet,\bullet}}\times\Bigl(\frac{\partial}{\partial t^{1,0}}\Bigr)^{s+1}\Bigl(\frac{\partial^2\mathcal{F}}{\partial t^{\delta,0}\partial t^{\lambda,0}}\Bigr)\right)\\
        =&\,\act(\gamma^{-1})^\alpha_{\zeta}\left(\sum_{s\geq 0}\eta^{\zeta\lambda}\eta^{\xi\delta}\Bigl(\frac{\partial}{\partial t^{1,0}}\Bigr)\left(\Bigl(\frac{\partial\mathcal P}{\partial w^{\xi,s}}\Bigr)\Big\vert_{w^{\bullet,\bullet}\rightarrow w^{\tp,\bullet,\bullet}}\times\Bigl(\frac{\partial}{\partial t^{1,0}}\Bigr)^{s+1}\Bigl(\frac{\partial^2\mathcal{F}}{\partial t^{\delta,0}\partial t^{\lambda,0}}\Bigr)\right)\right)\\
        =&\,\act(\gamma^{-1})^\alpha_{\zeta}\left(\sum_{s\geq 0}\eta^{\zeta\lambda}\eta^{\xi\delta}\partial\left(\frac{\partial\mathcal P}{\partial w^{\xi,s}}\partial^{s+1}\Omega^\DZ_{\delta,0;\lambda,0}\right)\right)\Big\vert_{w^{\bullet,\bullet}\mapsto w^{\tp,\bullet,\bullet}}.
    \end{align*}
    This shows that the Miura-type transformation \eqref{eq:dz2drnormal} is $\Gamma_w$-linearized. By Lemma~\ref{lem:linearizedmiura}, the DZ normal coordinate is a $\Gamma_w$-linearized coordinate system. This shows that the $\Gamma$-action and the $\Gamma_w$-action on the DZ normal coordinate system coincide.
\end{proof}

By Remark~\ref{rmk:inducedmiura}, we have the following DR/DZ correspondence for the reduced hierarchies.
\begin{thm}
    \label{thm:reduceddrdz}
    Let $u^\alpha\mapsto w^\alpha=u^\alpha+\sum_{d\geq 1}f^\alpha_d$  be the Miura-type transformation in Theorem~\ref{thm:drdz}. Then $u^{\alpha^\prime}\mapsto w^{\alpha^\prime}=u^{\alpha^\prime}+\sum_{d\geq 1}\iota^*(f^{\alpha^\prime}_d),\,\alpha^\prime\in I$ is a Miura-type transformation transforming the reduced DR hierarchy into the reduced DZ hierarchy. Moreover, this Miura-type transformation transforms $P_i^{\DR,\Gamma}$ into $P_i^{\DZ,\Gamma}=\overline{\iota}^*\left(P^\DZ_i\right)$, and endows the reduced DZ hierarchy with a bihamiltonian structure.
\end{thm}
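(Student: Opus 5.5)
The plan is to present the Miura-type transformation of Theorem~\ref{thm:drdz} as a composition of two $\Gamma$-linearized transformations of the 2nd kind between $\Gamma$-linearized coordinate systems, and then push everything through $\iota^*$ using Remark~\ref{rmk:inducedmiura} and Lemma~\ref{lem:main}.

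\textbf{Step 1: reduction of the Miura-type transformation.} The transformation $u\mapsto w$ is the composition of \eqref{eq:dr2normal} with the inverse of \eqref{eq:dz2drnormal}. The first factor is $\Gamma$-linearized, as observed after \eqref{eq:normaldr}. The second is $\Gamma_w$-linearized by the preceding theorem, and there $\Gamma_w=\Gamma$ and $\iota^*_w=\iota^*$. Since $\Gamma$-linearized transformations form a group, $u\mapsto w$ is $\Gamma$-linearized. All three coordinate systems $u$, $\widetilde u$, $w$ are $\Gamma$-linearized by Lemma~\ref{lem:linearizedmiura}. Remark~\ref{rmk:inducedmiura} then shows that $\iota^*(f^{\alpha'}_d)$ defines a Miura-type transformation of the 2nd kind on $\hat{\mathscr{A}}^\Gamma$, and that it intertwines $\iota^*$ in the $u$- and $w$-coordinates.

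\textbf{Step 2: transforming the flows.} In the ambient setting, the DR flows $\partial_{t^{\beta',p}}$, $\beta'\in I$, are carried to the DZ flows. These are derivations commuting with $\partial$, and I will view them as derivations $D_{\beta',p}$ on $\hat{\mathscr{A}}^0$. The previous lemma shows that the $D_{\beta',p}$ on the DZ side preserve $\ker\iota^*$, and that the descended derivations are exactly the reduced DZ flows \eqref{eq:reduceddz}. On the DR side, the reduced DR flows are generated by $\overline{g}^\Gamma_{\alpha',p}=\overline\iota^*(\overline g_{\alpha',p})$ with $P_1^{\DR,\Gamma}$. Lemma~\ref{lem:poissonpreserve}, applied to $[P_1^\DR,\overline g_{\beta',p}]$, shows that $\iota^*$ intertwines the ambient flow $D_{\beta',p}$ with the reduced DR flow.

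Both reduced flows are therefore the descents of the same ambient derivation $D_{\beta',p}$ through the surjection $\iota^*$. The commutative square of Step 1 identifies these descents, so the reduced Miura transformation maps the reduced DR hierarchy to the reduced DZ hierarchy.

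\textbf{Step 3: the bihamiltonian structure.} Miura-type transformations preserve the Schouten--Nijenhuis bracket. The commutative square also commutes with $\overline\iota^*$ on local functionals, including the odd variables. For the odd variables this uses the fact that $\iota^*(\partial v^{\beta'}/\partial\widetilde v^{\alpha'',s})=0$, which comes from the proof of Lemma~\ref{lem:trickapply}.

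Define $P_i^\DZ=P_i^\DR(u(w))$. Then $\overline\iota^*(P_i^\DZ)$ is the image of $\overline\iota^*(P_i^\DR)=P_i^{\DR,\Gamma}$ under the reduced transformation. Since the reduced transformation preserves brackets, Theorem~\ref{thm:main} shows that $(P_1^{\DZ,\Gamma},P_2^{\DZ,\Gamma})$ is a bihamiltonian structure for the reduced DZ hierarchy, with the transported recursion relation.

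\textbf{Main obstacle.} The delicate point is the compatibility of $\overline\iota^*$ with the Miura transformation on the odd variables $\rho_\alpha^s$. Here $\widetilde\rho_{\alpha'}$ involves $\partial v^\beta/\partial\widetilde v^{\alpha',i}$ summed over all $\beta$, including $\beta\in J$. The fact I need is that $\iota^*(\partial v^{\beta''}/\partial\widetilde v^{\alpha',i})=0$ for $\beta''\in J$, $\alpha'\in I$, so that the reduced $\widetilde\rho_{\alpha'}$ agrees with the one computed intrinsically on $X^\Gamma$. I will prove this with Lemma~\ref{lem:trick}, applied to the Taylor coefficients of $v^{\beta''}$ in the $\widetilde v$ variables, exactly as in Lemma~\ref{lem:trickapply}.
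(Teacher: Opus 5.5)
Your proposal is correct and follows essentially the paper's route. The paper obtains the theorem directly from the $\Gamma$-linearity of the DR/DZ Miura transformation (the preceding theorem, which gives $\Gamma_w=\Gamma$ and $\iota^*_w=\iota^*$) together with Remark~\ref{rmk:inducedmiura}; your Steps~2--3 spell out the descent of the flows and of the Schouten--Nijenhuis bracket that the paper leaves implicit. Your ``main obstacle'' is in fact harmless: the terms with $\beta''\in J$ in $\widetilde\rho_{\alpha'}$ are already killed because $\iota^*\rho_{\beta''}=0$, so the vanishing of $\iota^*(\partial v^{\beta''}/\partial\widetilde v^{\alpha',i})$ is true but not needed.
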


In summary, we have the following commutative diagram of $\Gamma$-equivariant maps:
\begin{equation*}
    \begin{tikzcd}
        \hat{\mathscr{A}}_{w} \arrow[d,"\eqref{eq:dz2drnormal}","\cong"'] & \hat{\mathscr{A}}_{w}^0 \arrow[l,hook'] \arrow[r,hook] \arrow[d,"\cong"']& \mathscr{A}^{\wk}_w \arrow[dr,bend left, "\cong"', "\eqref{eq:dznormal2bigphase}"] \arrow[d,"\cong"']&\\
        \hat{\mathscr{A}}_{\widetilde{u}} & \hat{\mathscr{A}}_{\widetilde{u}}^0 \arrow[l,hook'] \arrow[r,hook] & \mathscr{A}^{\wk}_{\widetilde{u}} \arrow[r,"\cong"',"\eqref{eq:drnormal2bigphase}"] & \mathcal{O}_{H^\infty}
    \end{tikzcd}.
\end{equation*}
The arrow $\mathscr{A}^{\wk}_w\rightarrow\mathscr{A}^{\wk}_{\widetilde{u}}$ is induced from the Miura-type transformation \eqref{eq:dz2drnormal}.

\subsection{Reduction of tau structures}
Let $(v^{\alpha,s},\rho_\alpha^s)$ be a coordinate system, $P\in\hat{\mathscr{F}}^2_{v,\geq 1}$ be a Hamiltonian structure with $\varepsilon$-expansion
\begin{equation*}
    P=\sum_{m\geq 0}\varepsilon^{m+1}P^{[m]},\quad P^{[m]}\in\hat{\mathscr{F}}^2_{v,m+1},
\end{equation*}
and $\{\overline{h}_{\alpha,p}\in\hat{\mathscr{F}}^0\mid p\geq 0\}$ be a collection of local functionals satisfying
\begin{equation*}
    \{\overline{h}_{\alpha,p},\overline{h}_{\beta,q}\}_P=0.
\end{equation*}
Denote the matrix differential operator associated with $P^{[m]}$ by $\mathscr{P}^{[m]}=\left(\mathscr{P}^{\alpha\beta,[m]}\right)$. In particular, we have
\begin{equation*}
    \quad\mathscr{P}^{\alpha\beta,[0]}=g^{\alpha\beta}\partial+b^{\alpha\beta}_\mu v^{\mu,1},\quad g^{\alpha\beta},b^{\alpha\beta}_\mu\in\hat{\mathscr{A}}^0_{0}.
\end{equation*}
We require that
\begin{equation*}
    \det (g^{\alpha\beta})\Big\vert_{v^\bullet=0}\neq 0
\end{equation*}
and that
\begin{equation}
    \mathscr{P}^{\alpha\mu}\frac{\delta\overline{h}_{1,0}}{\delta v^{\mu}}=v^{\alpha,1},
    \label{eq:t10equalx}
\end{equation}
where $\mathscr{P}^{\alpha\beta}=\sum_{m\geq 0}\varepsilon^{m+1}\mathscr{P}^{\alpha\beta,[m]}$. Define the following Hamiltonian PDEs with commutative flows:
\begin{equation}
    \frac{\partial v^\alpha}{\partial t^{\beta,q}}=\mathscr{P^{\alpha\mu}}\frac{\delta\overline{h}_{\beta,q}}{\delta v^{\mu}},\quad\forall 1\leq \alpha,\beta\leq N,\,q\geq 0.
    \label{eq:tmphierarchy}
\end{equation}

\begin{defn}[\cite{BDGR1,DLZ3,DZ3}]
    A choice of differential polynomials $\{h_{\alpha,p}\in\hat{\mathscr{A}}^0_v\mid p\geq -1\}$ is called a collection of tau-symmetric Hamiltonian densities if
    \begin{enumerate}[label=$(D\arabic*)$,ref=$(D\arabic*)$]
        \item \label{item:casimir}The differential polynomials with $p=-1$ provide $N$ linearly independent Casimirs:
        \begin{equation*}
            \mathscr{P}^{\alpha\beta}\frac{\delta\overline{h}_{\mu,-1}}{\delta v^{\beta}}=0,\quad\overline{h}_{\alpha,-1}=\int h_{\alpha,-1}.
        \end{equation*}
        \item \label{item:representatives}The differential polynomials with $p\geq 0$ are representatives of $\overline{h}_{\alpha,p}$:
        \begin{equation*}
            \overline{h}_{\alpha,p}=\int h_{\alpha,p},\quad p\geq 0.
        \end{equation*}
        \item \label{item:tausymmetric}They satisfy the following tau-symmetric property:
        \begin{equation*}
            \frac{\partial h_{\alpha,p-1}}{\partial t^{\beta,q}}=\frac{\partial h_{\beta,q-1}}{\partial t^{\alpha,p}},\quad p,q\geq 0.
        \end{equation*}
    \end{enumerate}
\end{defn}

Given a collection of tau-symmetric Hamiltonian densities $\{h_{\alpha,p}\}$, there are differential polynomials $\Omega_{\alpha,p;\beta,q}\in\hat{\mathscr{A}}^0$ satisfying
\begin{equation*}
    \frac{\partial h_{\alpha,p}}{\partial t^{\beta,q}}=\partial\Omega_{\alpha,p;\beta,q},
\end{equation*}
since
\begin{equation*}
    \int\frac{\partial h_{\alpha,p-1}}{\partial t^{\beta,q}}=\{\overline{h}_{\alpha,p-1},\overline{h}_{\beta,q}\}_P=0,\quad p,q\geq 0,
\end{equation*}
by \eqref{eq:tmphierarchy}, \ref{item:casimir}, and \ref{item:representatives}. The differential polynomials $\Omega_{\alpha,p;\beta,q}$ are determined up to additive constants. Hence one can adjust these constants so that they satisfy the following additional properties.

\begin{defn}[\cite{BDGR1,DLZ3,DZ3}]
    A tau structure for a collection of tau-symmetric Hamiltonian densities $\{h_{\alpha,p}\}$ is a collection of differential polynomials $\{\Omega_{\alpha,p;\beta,q}\mid 1\leq\alpha,\beta\leq N,\,p,q\geq 0\}$ satisfying
    \begin{enumerate}[label=$(T\arabic*)$,ref=$(T\arabic*)$]
        \item $\partial\Omega_{\alpha,p;\beta,q}=\frac{\partial h_{\alpha,p-1}}{\partial t^{\beta,q}}$,
        \item $\Omega_{\alpha,p;1,0}=h_{\alpha,p-1}$,
        \item $\Omega_{\alpha,p;\beta,q}=\Omega_{\beta,q;\alpha,p}$,
    \end{enumerate}
    for all $1\leq\alpha,\beta\leq N,\,p,q\geq 0$.
    \label{def:ts}
\end{defn}

Given a tau structure $\{\Omega_{\alpha,p;\beta,q}\}$, the property \ref{item:tausymmetric} implies that the differential polynomial $\partial\left(\frac{\partial\Omega_{\alpha,p;\beta,q}}{\partial t^{\mu,r}}\right)$ is symmetric under the permutation of the pairs $(\alpha,p),\,(\beta,q),\,(\mu,r)$. Since the right-hand side of \eqref{eq:tmphierarchy} lies in $\hat{\mathscr{A}}^0_{\geq 1}$, the differential polynomial $\frac{\partial\Omega_{\alpha,p;\beta,q}}{\partial t^{\mu,r}}$ has no constant term. Therefore, the differential polynomial $\frac{\partial\Omega_{\alpha,p;\beta,q}}{\partial t^{\mu,r}}$ must itself be symmetric under the permutation of the pairs $(\alpha,p),\,(\beta,q),\,(\mu,r)$.

This symmetric property implies that given a formal power series solution $v^{\alpha}=\mathfrak{v}^\alpha(x,t^{\bullet,\bullet};\varepsilon)$ of the system \eqref{eq:tmphierarchy}, there is a formal power series $\mathfrak{F}$ in $t^{\alpha,p}$ such that
\begin{equation*}
    \frac{\partial^2\mathfrak{F}}{\partial t^{\alpha,p}\partial t^{\beta,q}}=\Omega_{\alpha,p;\beta,q}\Big\vert_{v^{\mu,s}\rightarrow \frac{\partial^s \mathfrak{v}^\mu}{\partial x^s}}.
\end{equation*}
The function $\mathfrak{F}$ is determined up to an affine linear polynomial in the variables $t^{\alpha,p}$. We call any such function $\mathfrak{F}$ a \textit{free energy} associated with this tau structure and $\exp(\varepsilon^{-2}\mathfrak{F})$ a tau function \cite{DLZ3,DZ3}.

The ambient (resp. reduced) DR hierarchy, together with the Hamiltonian structure $P_1^{\DR}$ $\left(\text{resp.\ } P_1^{\DR,\Gamma}\right)$, admits a canonical choice of tau-symmetric Hamiltonian densities \cite{BDGR1}:
\begin{equation*}
        h^\DR_{\alpha,p}:=\frac{\delta \overline{g}_{\alpha,p+1}}{\delta u^1}\ \left(\text{resp.\ } h^{\DR,\Gamma}_{\alpha^\prime,p}=\iota^*h^\DR_{\alpha^\prime,p}=\frac{\delta \overline{g}^\Gamma_{\alpha^\prime,p+1}}{\delta u^1}\right),
\end{equation*}
There is also a corresponding tau structure given by
\begin{equation*}
    \{\Omega^\DR_{\alpha,p;\beta,q}\}\ \left(\text{resp.\ } \{\Omega^{\DR,\Gamma}_{\alpha^\prime,p;\beta^\prime,q}:=\iota^*\Omega^\DR_{\alpha,p;\beta,q}\}\right),
\end{equation*}
and the generating function of the DR correlators $\mathcal{F}^\DR$ $\left(\text{resp.\ } \mathcal{F}^{\DR,\Gamma}=\iota^*\mathcal{F}^\DR\right)$ is a corresponding free energy for the solution 
\begin{equation*}
    u^{\str,\alpha}=u^\alpha(\widetilde{u})\Big\vert_{\widetilde{u}^{\bullet,\bullet}\rightarrow\widetilde{u}^{\str,\bullet,\bullet}\vert_{t^{1,0}\rightarrow t^{1,0}+x}}\ \left(\text{resp.\ } \iota^{\infty,*}u^{\str,\alpha^\prime}\right).
\end{equation*}

Associated with the ambient DZ hierarchy and its Hamiltonian structure $P_1^{\DZ}$, there is a tau structure 
\begin{equation*}
    \{\Omega^\DZ_{\alpha,p;\beta,q}\},
\end{equation*}
and the differential polynomials
\begin{equation*}
    h^\DZ_{\alpha,p}:=\Omega_{\alpha,p+1;1,0}^\DZ.
\end{equation*}
are tau-symmetric Hamiltonian densities \cite{DZ3}. The generating function $\mathcal{F}$ of the ancestor correlators is a corresponding free energy for the solution
\begin{equation*}
    w^{\tp,\alpha}\Big\vert_{t^{1,0}\rightarrow t^{1,0}+x}.
\end{equation*}

\begin{thm}
    The reduced DZ hierarchy together with the Hamiltonian structure $P_1^{\DZ,\Gamma}$ admits a collection of tau-symmetric Hamiltonian densities
    \begin{equation*}
         h^{\DZ,\Gamma}_{\alpha^\prime,p}=\iota^*h^\DZ_{\alpha^\prime,p},\quad\forall\alpha^\prime\in I,\,p\geq 0,
    \end{equation*}
    whose tau structure is given by
    \begin{equation*}
        \{\Omega^{\DZ,\Gamma}_{\alpha^\prime,p;\beta^\prime,q}\mid\forall\alpha^\prime,\beta^\prime\in I,\, p,q\geq 0\},
    \end{equation*}
    and the function $\mathcal{F}^\Gamma$ is a free energy corresponding to the solution
    \begin{equation*}
        \iota^{\infty,*}(w^{\tp,\alpha^\prime})\Big\vert_{t^{1,0}\rightarrow t^{1,0}+x}.
    \end{equation*}
    Moreover, the Miura-type transformation in Theorem~\ref{thm:reduceddrdz} preserves the tau-symmetric Hamiltonian densities and the tau structures.
    \label{thm:taustructure}
\end{thm}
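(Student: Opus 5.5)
The plan is to push every ingredient of the ambient DZ tau structure through the surjective ring map $\iota^*=\iota^*_w$ (which is legitimate in the DZ normal coordinates by the previous theorem), and to check each axiom \ref{item:casimir}--\ref{item:tausymmetric} and (T1)--(T3) by the same vanishing mechanism, Lemma~\ref{lem:equi2vanish}. The key observation, already used in the proof that the reduced DZ flows commute, is that for $\beta^\prime\in I$ the ambient derivation $D_{\beta^\prime,p}$ preserves the ideal $\hat{\mathscr{I}}_w\cap\hat{\mathscr{A}}^0_w$. It therefore descends to $D^\Gamma_{\beta^\prime,p}$ with $\iota^*\circ D_{\beta^\prime,p}=D^\Gamma_{\beta^\prime,p}\circ\iota^*$, and $D^\Gamma_{\beta^\prime,p}$ is exactly the reduced flow $\partial/\partial t^{\beta^\prime,p}$. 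Applying $\iota^*$ to the ambient identities $\partial\Omega^\DZ_{\alpha,p;\beta,q}=\partial h^\DZ_{\alpha,p-1}/\partial t^{\beta,q}$, $\Omega^\DZ_{\alpha,p;1,0}=h^\DZ_{\alpha,p-1}$ and $\Omega^\DZ_{\alpha,p;\beta,q}=\Omega^\DZ_{\beta,q;\alpha,p}$, restricted to primed indices, immediately gives (T1)--(T3) for $\Omega^{\DZ,\Gamma}$ and $h^{\DZ,\Gamma}$, because $\iota^*$ commutes with $\partial$ and $\phi_1\in H^\Gamma$. The tau-symmetry \ref{item:tausymmetric} follows in the same way.

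Next I will check that the $h^{\DZ,\Gamma}_{\alpha^\prime,p}$ are Hamiltonian densities for the reduced flows with respect to $P_1^{\DZ,\Gamma}=\overline{\iota}^*P_1^\DZ$.
\begin{itemize}
\item[] To transport the Hamiltonian relation through $\overline{\iota}^*$, I intend to use Lemma~\ref{lem:main}. This requires that $\int h^\DZ_{\alpha^\prime,p}$ and $P^\DZ_1$ have representatives satisfying \eqref{eq:main} in the DZ normal coordinates.
\item[] On the DR side this is Lemma~\ref{lem:poissonpreserve}, together with the analogous statement for $\overline{g}_{\mu^\prime,p}$.
\item[] Since the Miura transformation \eqref{eq:dz2drnormal} and \eqref{eq:dr2normal} are $\Gamma$-linearized between $\Gamma$-linearized coordinate systems, Lemma~\ref{lem:trickapply} transfers property \eqref{eq:main} from the $u$ coordinates to the $w$ coordinates.
\item[] Alternatively, and more directly, I will verify \eqref{eq:main} for $h^\DZ_{\alpha^\prime,p}=\Omega^\DZ_{\alpha^\prime,p+1;1,0}$ via the isomorphism \eqref{eq:dznormal2bigphase}. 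Under it, $\partial h/\partial w^{\alpha^\pprime,s}$ restricted to the fixed locus becomes a generating series of correlators with exactly one insertion from $H^\mov$, all others from $H^\Gamma$. This vanishes by Lemma~\ref{lem:equi2vanish}, via Lemma~\ref{lem:trick} applied to the $\Gamma$-invariant coefficients of $\Omega^\DZ$.
\end{itemize}
The Casimirs in \ref{item:casimir} for the reduced hierarchy should be the $h_{\alpha^\prime,-1}$, whose reduction is handled the same way.

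The free energy statement is then formal: restricting $\partial^2\mathcal{F}/\partial t^{\alpha,p}\partial t^{\beta,q}=\Omega^\DZ_{\alpha,p;\beta,q}|_{w\to w^{\tp}}$ by $\iota^{\infty,*}$ for primed indices, and using that \eqref{eq:dznormal2bigphase} intertwines $\iota^*_w$ with $\iota^{\infty,*}$, gives the claim for $\mathcal{F}^\Gamma$ and the solution $\iota^{\infty,*}w^{\tp,\alpha^\prime}$. For the last assertion, I will use that the ambient Miura transformation of Theorem~\ref{thm:drdz} carries $(h^\DR,\Omega^\DR)$ to $(h^\DZ,\Omega^\DZ)$ up to the correction $\mathcal{P}$ from \eqref{eq:defofp} (by \cite{BDGR1}). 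By Remark~\ref{rmk:inducedmiura} the reduced transformation is compatible with $\iota^*$, and $\mathcal{P}$ is $\Gamma$-invariant, so applying $\iota^*$ to the ambient relation yields the reduced one.

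The main obstacle I expect is the Hamiltonian-density step. The difficulty is to make sure that \eqref{eq:main} holds for genuine representatives, and not just after integration, and that the bracket compatibility of Lemma~\ref{lem:main} is applicable with $P_1^\DZ$, which is not constant in the $w$ coordinates. I would first try the $\mathcal{O}_{H^\infty}$ correlator argument sketched above, since it avoids tracking the Miura transformation explicitly.
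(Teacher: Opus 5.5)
Your plan is correct and has the same skeleton as the paper's proof. You push everything through $\iota^*=\iota^*_w$. The commutation $\iota^*\circ\partial/\partial t^{\beta^\prime,q}=\partial/\partial t^{\beta^\prime,q}\circ\iota^*$ from the commutativity lemma gives (T1)--(T3) and \ref{item:tausymmetric}. Lemma~\ref{lem:main} shows the reduced flows are Hamiltonian for $P_1^{\DZ,\Gamma}$, and the free energy is transported through the $\Gamma$-equivariant isomorphism \eqref{eq:dznormal2bigphase}.

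The one real difference is how you obtain \eqref{eq:main} for $h^\DZ_{\alpha^\prime,p}$. The paper imports it from the DR side via $h^\DR_{\alpha^\prime,p}$ and Lemma~\ref{lem:trickapply}; your preferred route stays in DZ normal coordinates. You should phrase that route differently. The isomorphism \eqref{eq:dznormal2bigphase} does not intertwine $\partial/\partial w^{\alpha^\pprime,s}$ with $\partial/\partial t^{\alpha^\pprime,s}$, so $\iota^*(\partial h/\partial w^{\alpha^\pprime,s})$ is not a correlator series with one insertion from $H^\mov$. The actual argument runs as follows. $h^\DZ_{\alpha^\prime,p}$ corresponds to the $\Gamma$-invariant series $\partial^2\mathcal{F}/\partial t^{\alpha^\prime,p+1}\partial t^{1,0}$, so it is $\Gamma_w$-invariant. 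Its Taylor coefficients in the linearized variables $w^{\alpha,s}$ therefore form an invariant tensor, and Lemma~\ref{lem:trick} kills those with exactly one index in $J$. This is what your final clause says.

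The direct route buys two things. First, Lemma~\ref{lem:main} needs \eqref{eq:main} for only one argument, so you never need it for $P_1^\DZ$. Second, you avoid any representative-level comparison of $h^\DZ$ with $h^\DR$. The second point matters. By \eqref{eq:defofp} one has $h^\DR_{\alpha,p}(u(w))=h^\DZ_{\alpha,p}+\partial D_{\alpha,p+1}\mathcal{P}$, where $D_{\alpha,p+1}$ is the ambient flow derivation; for $p=-1$ this is exactly \eqref{eq:dz2drnormal}. So the identity $h^\DZ_{\alpha,p}=h^\DR_{\alpha,p}(u(w))$ used in the paper holds in general only modulo $\partial\hat{\mathscr{A}}$. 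The DR-side route then needs the extra, easy remark that the $\Gamma$-invariant correction $\partial D_{\alpha^\prime,p+1}\mathcal{P}$ also satisfies \eqref{eq:main}. For the same reason, your treatment of the last assertion ``up to the correction $\mathcal{P}$'', using the $\Gamma$-invariance of $\mathcal{P}$, is the accurate form of the paper's one-line argument.
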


\begin{proof}
    Denote by $\mathscr{P}_1^{\DZ}$ (resp. $\mathscr{P}_1^{\DZ,\Gamma}$) the matrix differential operator associated with $P_1^\DZ$ (resp. $P_1^{\DZ,\Gamma}$). The conditions \ref{item:representatives} hold trivially. The differential polynomials $h^{\DZ,\Gamma}_{\alpha^\prime,-1}$ are linearly independent since we have $h^\DZ_{\alpha,-1}=\eta_{\alpha\beta}w^\beta$. 
    
    Note that the proof of Lemma~\ref{lem:poissonpreserve} shows that the differential polynomials $P_1^\DR$ and $g_{\alpha^\prime,p},\,\alpha^\prime\in I$ satisfy \eqref{eq:main}, and so do $h^{\DR}_{\alpha^\prime,p}$. It follows that the differential polynomials $P_1^\DZ$ also satisfy \eqref{eq:main} by Lemma~\ref{lem:trickapply}. Moreover, the discussion in \cite{BDGR1}, Section 3.6 together with Theorem~\ref{thm:drdz} shows that
    \begin{equation*}
        h^\DZ_{\alpha,p}=h^\DR_{\alpha,p}\bigl(u(w)\bigr),
    \end{equation*}
    which implies that the differential polynomials $h^{\DZ}_{\alpha^\prime,p},\,\alpha^\prime\in I$ also satisfy \eqref{eq:main}. Applying the trick in Lemma~\ref{lem:trickapply}, we can show by Lemma~\ref{lem:trick} that
    \begin{equation*}
        \iota^*\left(\frac{\delta h^\DZ_{\alpha^\prime,p}}{\delta w^{\beta^\pprime}}\right)=0,\quad\forall\beta^\pprime\in J.
    \end{equation*}
    It follows that the condition \ref{item:tausymmetric} holds:
    \begin{align*}
        \frac{\partial h^{\DZ,\Gamma}_{\alpha^\prime,p-1}}{\partial t^{\beta^\prime,q}}&=\sum_{s\geq 0}\frac{\partial h^{\DZ,\Gamma}_{\alpha^\prime,p-1}}{\partial w^{\mu^\prime,s}}\partial^s\left(\mathscr{P}_1^{\DZ,\Gamma,\mu^\prime\nu^\prime}\frac{\delta\overline{h}^{\DZ,\Gamma}_{\beta,q}}{\delta w^{\nu^\prime}}\right)=\iota^*\left(\sum_{s\geq 0}\frac{\partial h^{\DZ}_{\alpha^\prime,p-1}}{\partial w^{\mu,s}}\partial^s\left(\mathscr{P}_1^{\DZ,\mu\nu}\frac{\delta\overline{h}^\DZ_{\beta,q}}{\delta w^{\nu}}\right)\right)\\
        &=\iota^*\left(\frac{\partial h^{\DZ}_{\alpha^\prime,p-1}}{\partial t^{\beta^\prime,q}}\right)=\iota^*\left( \frac{\partial h^{\DZ}_{\beta^\prime,q-1}}{\partial t^{\alpha^\prime,p}}\right)=\frac{\partial h^{\DZ,\Gamma}_{\beta^\prime,q-1}}{\partial t^{\alpha^\prime,p}}.
    \end{align*}
    The second half of the condition \ref{item:casimir} can be verified analogously. 
    
    The conditions in Definition~\ref{def:ts} hold for $\{\Omega_{\alpha^\prime,p;\beta^\prime,q}^{\DZ,\Gamma}\}$ since we have
    \begin{equation*}
        \iota^*\circ\frac{\partial}{\partial t^{\alpha^\prime,p}}=\frac{\partial}{\partial t^{\alpha^{\prime,p}}}\circ\iota^*.
    \end{equation*}
    The function $\mathcal{F}^\Gamma$ is a corresponding free energy since the isomorphism \eqref{eq:dznormal2bigphase} is $\Gamma$-equivariant. The last assertion follows from
    \begin{equation*}
        h^{\DZ,\Gamma}_{\alpha^\prime,p}=\iota^*h^\DZ_{\alpha^\prime,p}=\iota^*h^\DR_{\alpha^\prime,p}(u(w))=h^{\DR,\Gamma}_{\alpha^\prime,p}(u^\prime(w^\prime)).
    \end{equation*}
    This completes the proof.
\end{proof}

\section{Examples}
\label{sec:example}
In this section, we present two examples. In the first example, we show that, for the $4$-spin CohFT, the reduction of the bihamiltonian structure agrees with the reduction established in \cite{LRZ} from the Drinfeld-Sokolov hierarchy of $A_3^{(1)}$ type to that of $C_2^{(1)}$ type. In the second example, we show that the DZ hierarchy of the orbifold Gromov-Witten theory of $\mathbb{P}^1_{2,2,2,2}$ admits two reductions. Both of them have the same dispersionless limit up to a rescaling, while their higher genus theories are not equivalent.

\subsection{Reduction of the $4$-spin theory}
\subsubsection{The $4$-spin theory}
Let $\{\Lambda_{g,n}\}$ be the CohFT defined by the $4$-spin theory. In this case, the phase space $H$ is a $3$-dimensional $\mathbb{C}$-vector space with basis $\phi_1,\phi_2,\phi_3$ and the bilinear form $\eta$ is given by
\begin{equation*}
    \eta_{\alpha\beta}=\eta(\phi_\alpha,\phi_\beta)=\delta_{\alpha+\beta,4}.
\end{equation*}
The data appearing in the homogeneity axiom \ref{item:homo} reads
\begin{equation*}
    |\phi_\alpha|=\frac{\alpha-1}{4},\quad r^\alpha=0,\quad d=\frac{1}{2}.
\end{equation*}
The following properties uniquely determine the maps $\{\Lambda_{g,n}\}$\cite{PPZ}:
\begin{enumerate}[label=$\arabic*.$,ref=$\arabic*$]
    \item  $\Lambda_{g,n}(\phi_{\alpha_1}\otimes\cdots\otimes\phi_{\alpha_n})=0$ if $4\nmid 2(g-1)-n+\sum_{i=1}^n\alpha_i$,
    \item $\Lambda_{0,3}(\phi_\alpha\otimes\phi_\beta\otimes\phi_\gamma)=\delta_{\alpha+\beta+\gamma,5}$,
    \item $\Lambda_{0,4}(\phi_2,\phi_2,\phi_3,\phi_3)=\frac{1}{4}[\pt]\in H^2\left(\overline{\mathcal{M}}_{0,4},\mathbb{C}\right)$, where $[\pt]$ is the Poincar\'e dual of a point,
    \item $\Lambda_{g,n}(\phi_{\alpha_1}\otimes\cdots\otimes\phi_{\alpha_n})$ is of cohomological degree $(g-1)-\frac{n}{2}+\frac{1}{2}\sum_{i=1}^n \alpha_i$ if $4\mid 2(g-1)-n+\sum_{i=1}^n\alpha_i$.
\end{enumerate}
In particular, the Frobenius manifold of the $4$-spin theory is given by the following potential
\begin{equation*}
    F=\frac{1}{2}(t^1)^2 t^3+\frac{1}{2}t^1 (t^2)^2+\frac{1}{16}(t^2)^2(t^3)^2+\frac{1}{960}(t^3)^5.
\end{equation*}

This CohFT admits a $\mathbb{Z}_2$-symmetry as follows (see also \cite{LRZ}). Let $\gamma\in\mathbb{Z}_2$ be the nontrivial order $2$ element. We define a $\mathbb{Z}_2$-action on the phase space $H$ by
\begin{equation*}
    \gamma.\phi_1=\phi_1,\quad\gamma.\phi_2=-\phi_2,\quad\gamma.\phi_3=\phi_3.
\end{equation*}
The class $\Lambda_{g,n}(\phi_{\alpha_1}\otimes\cdots\otimes\phi_{\alpha_n})$ is nonzero only if $4\mid 2(g-1)-n+\sum_{i=1}^n\alpha_i$. In particular, we have
\begin{equation*}
    2\Big\vert\sum_{i=1}^n (\alpha_i-1),
\end{equation*}
which implies that
\begin{align*}
    \Lambda_{g,n}(\gamma.\phi_{\alpha_1}\otimes\cdots\otimes\gamma.\phi_{\alpha_n})=&(-1)^{\sum_{i=1}^n (\alpha_i-1)}\Lambda_{g,n}(\phi_{\alpha_1}\otimes\cdots\otimes\phi_{\alpha_n})\\
    =&\Lambda_{g,n}(\phi_{\alpha_1}\otimes\cdots\otimes\phi_{\alpha_n}).
\end{align*}

\subsubsection{The algebra of pseudo-differential operators}
Let $(v^{\alpha,s};\rho_\alpha^s)$ be a coordinate system on $J^\infty(\hat{\mathfrak{X}})$. The algebra of pseudo-differential operators is defined to be 
\begin{equation*}
    \mathcal{D}=\left\{\sum_{i=-\infty}^n a_i\partial^i\mid n\in\mathbb{Z},\,a_i\in\hat{\mathscr{A}}^0_v\right\}.
\end{equation*}
For two pseuso-differential operators $A=\sum_{i\leq n} a_i\partial^i$ and $B=\sum_{j\leq m} b_j\partial^j$, their product is defined by
\begin{equation*}
    A\cdot B=AB=\sum_{i\leq n}\sum_{j\leq m}\sum_{l\geq 0}\binom{i}{l}a_i\partial^l(b_j)\partial^{i+j-l}.
\end{equation*}
We define their commutator to be
\begin{equation*}
    [A,B]=AB-BA.
\end{equation*}
Let
\begin{equation*}
    (A)_{+}=\sum_{0\leq i\leq n} a_i\partial^i,\quad\res A=a_{-1}.
\end{equation*}
The operation $\res$ has an important property:
\begin{equation}
    \res [A,B]\in\partial\hat{\mathscr{A}}^0_v.
    \label{eq:ressym}
\end{equation}
Define the operator $\tr\colon\mathcal{D}\rightarrow\hat{\mathscr{F}}^0_v$ by
\begin{equation*}
    \tr(A)=\int\res A.
\end{equation*}
The property \eqref{eq:ressym} implies that $\tr(AB)=\tr(BA)$. 

\subsubsection{The bihamiltonian structure of the Drinfeld-Sokolov hierarchy of $A_3^{(1)}$ type}
Let $L=\partial^4+v^3\partial^2+v^2\partial+v^1$ be the Lax operator of the Drinfeld-Sokolov hierarchy of $A_3^{(1)}$ type \cite{DLZ2,DS}. For a local functional $F\in\hat{\mathscr{F}}^0_v$, define the variational derivation with respect to $L$ by
\begin{equation*}
    \frac{\delta F}{\delta L}=\sum_{i=1}^3\partial^{-i}\frac{\delta F}{\delta v^i}.
\end{equation*}
The brackets \eqref{eq:poisson} defined by the bihamiltonian structure of the Drinfeld-Sokolov hierarchy of $A_3^{(1)}$ type are given by
\begin{equation}
    \begin{aligned}
        \{F,G\}_1^{A_3^{(1)}}&=\tr\left(\left[B,A\right]L\right),\\
        \{F,G\}_2^{A_3^{(1)}}&=\tr\left(\left(LB\right)_{+}LA-AL(BL)_{+}+\frac{1}{4}A[L,g_B]\right),
    \end{aligned}
    \label{eq:typeA}
\end{equation}
where 
\begin{equation*}
    A=\frac{\delta F}{\delta L},\quad B=\frac{\delta G}{\delta L},
\end{equation*}
and $g_B\in\hat{\mathscr{A}}^0_v$ is any differential polynomial satisfying
\begin{equation*}
    \partial g_B=\res [L,B].
\end{equation*}
The existence of $g_B$ follows from the property \eqref{eq:ressym}.

The DZ normal coordinates for the $4$-spin DZ hierarchy are given by
\begin{equation*}
    w^\alpha=\frac{1}{(4-\alpha)(-4)^{\frac{4-\alpha-1}{2}}}\res L^{\frac{4-\alpha}{4}}.
\end{equation*}
A direct calculation shows that
\begin{align*}
    w^1&=-\frac{1}{16}v^1+\frac{1}{128}(v^3)^2+\frac{1}{32}v^{2,1}-\frac{1}{192}v^{3,2},\\
    w^2&=-\frac{\sqrt{-1}}{8}v^2+\frac{\sqrt{-1}}{8}v^{3,1},\\
    w^3&=\frac{1}{4}v^3.
\end{align*}
Let $\mathscr{P}_i^{A_3^{(1)}}$ be the matrix differential operator associated with the bracket $\{-,-\}_i^{A_3^{(1)}}$ in the DZ normal coordinate systems. Then the matrix differential operators $\mathscr{P}_i^{\DZ}$ associated with the bihamiltonian structure $P^\DZ_i$ of the $4$-spin DZ hierarchy are given by
\begin{equation*}
    \mathscr{P}_1^{\DZ}=-16\mathscr{P}_1^{A_3^{(1)}},\quad\mathscr{P}_2^{\DZ}=\mathscr{P}_2^{A_3^{(1)}}.
\end{equation*}
The nonzero components in the upper triangular part of the matrices $\mathscr{P}_i^\DZ$ are given by (see also \cite{BRS})
{
\allowdisplaybreaks
\begin{align*}
    \mathscr{P}_1^{\DZ,1,1}=&\frac{1}{48}\partial^3,\quad\mathscr{P}_1^{\DZ,1,3}=\partial,\quad\mathscr{P}_1^{\DZ,2,2}=\partial\\
    \mathscr{P}_2^{\DZ,1,1}=&\,\Bigg(\bigg(\frac{1}{32}(w^3)^3+\frac{3}{16}(w^2)^2\bigg)\partial+\bigg(\frac{3}{16}w^2 w^{2,1}+\frac{3}{64}(w^3)^2 w^{3,1}\bigg)\Bigg)\\
    &+\Bigg(\bigg(\frac{7}{256}(w^3)^2+\frac{1}{48}w^1\bigg)\partial^3+\bigg(\frac{1}{32}w^{1,1}+\frac{21}{256}w^3 w^{3,1}\bigg)\partial^2\\
    &\quad+\bigg(\frac{5}{128}(w^{3,1})^2+\frac{13}{256}w^3 w^{3,2}+\frac{1}{24}w^{1,2}\bigg)\partial\\
    &\quad+\bigg(\frac{3}{128}w^{3,1}w^{3,2}+\frac{1}{64}w^{1,3}+\frac{3}{256}w^3 w^{3,3}\bigg)\Bigg)\\
    &+\bigg(\frac{7}{1152}w^3\partial^5+\frac{35}{2304}w^{3,1}\partial^4+\frac{91}{4608}w^{3,2}\partial^3\\
    &\quad+\frac{133}{9216}w^{3,3}\partial^2+\frac{47}{9216}w^{3,4}\partial+\frac{1}{1536}w^{3,5}\bigg)+\frac{17}{36864}\partial^7,\\
    \mathscr{P}_2^{\DZ,1,2}=&\,\bigg(\frac{5}{16}w^2 w^3\partial+\frac{1}{8}w^3 w^{2,1}+\frac{1}{8}w^2 w^{3,1}\bigg)\\
    &+\bigg(\frac{7}{64}w^2\partial^3+\frac{7}{48}w^{2,1}\partial^2+\frac{17}{192}w^{2,2}\partial+\frac{1}{48}w^{2,3}\bigg),\\
    \mathscr{P}_2^{\DZ,1,3}=&\,\bigg(w^1\partial+\frac{1}{4}w^{1,1}\bigg)+\bigg(\frac{7}{192}w^3\partial^3+\frac{7}{192}w^{3,1}\partial^2\bigg)+\frac{7}{768}\partial^5,\\
    \mathscr{P}_2^{\DZ,2,2}=&\,\Bigg(\bigg(\frac{1}{8}(w^3)^2+w^1\big)\partial+\big(\frac{1}{2}w^{1,1}+\frac{1}{8}w^3 w^{3,1}\bigg)\Bigg)\\
    &+\bigg(\frac{1}{8}w^3\partial^3+\frac{3}{16}w^{3,1}\partial^2+\frac{1}{12}w^{3,2}\partial+\frac{1}{96}w^{3,3}\bigg)+\frac{1}{64}\partial^5,\\
    \mathscr{P}_2^{\DZ,2,3}=&\,\frac{3}{4}w^2\partial+\frac{1}{4}w^{2,1},\\
    \mathscr{P}_2^{\DZ,2,3}=&\,\frac{1}{2}w^3\partial+\frac{1}{4}w^{3,1}+\frac{5}{16}\partial^3.
\end{align*} 
}
The lower triangular part can be obtained from the antisymmetry property \eqref{eq:antisym}.

Let $u^1,u^2,u^3$ be the dual coordinates, which are also the DR jet variables by definition. The Miura-type transformation from the DZ normal coordinate to the DR jet variables is given in \cite{BRS} by
\begin{equation*}
    w^1=u^1+\frac{1}{96}u^{3,2},\quad w^2=u^2,\quad w^3=u^3.
\end{equation*}
This Miura transformation is clearly $\mathbb{Z}_2$-linearized and therefore, the $\mathbb{Z}_2$-action on $\hat{\mathscr{A}}^0_w$ is given by
\begin{equation}
    \gamma.w^1=w^1,\quad\gamma.w^2=-w^2,\quad\gamma.w^3=w^3.
    \label{eq:z2action}
\end{equation}
It is easy to check that the DZ bihamiltonian structure $P^\DZ_i$ satisfies \eqref{eq:main}. According to Theorem~\ref{thm:reduceddrdz}, the bihamiltonian structure of the reduced DZ hierarchy is given by $P_i^{\DZ,\mathbb{Z}_2}=\overline{\iota}^*(P_i^\DZ)$. Their corresponding matrix operators are
\begin{equation*}
    \mathscr{P}_i^{\DZ,\mathbb{Z}_2}=\begin{pmatrix}
        \mathscr{P}_i^{\DZ,1,1}&\mathscr{P}_i^{\DZ,1,3}\\
        \mathscr{P}_i^{\DZ,3,1}&\mathscr{P}_i^{\DZ,3,3}
    \end{pmatrix}\Bigg\vert_{w^{2,\bullet}\rightarrow 0}.
\end{equation*}
Here we use the canonical section
\begin{equation*}
    \hat{\mathscr{A}}_w^{\mathbb{Z}_2,0}\cong\mathbb{C}[[w^{\alpha^\prime,s}\mid \alpha^\prime\in\{1,3\}]]\hookrightarrow\hat{\mathscr{A}}_w^{0}\cong\mathbb{C}[[w^{\alpha,s}\mid \alpha\in\{1,2,3\}]],\quad w^{\alpha^\prime}\mapsto w^{\alpha^\prime}
\end{equation*}
of $\iota^*$ to identify $\hat{\mathscr{A}}_w^{\mathbb{Z}_2,0}$ with its image in $\hat{\mathscr{A}}_w^{0}$. We use the notation $(-)\vert_{w^{2,\bullet}\rightarrow 0}$ to mean setting coefficients with terms involving variables with superscript $2$ to zero in a differential operator.

\subsubsection{The bihamiltonian structure of the Drinfeld-Sokolov hierarchy of $C_2^{(1)}$ type}
Let $\widetilde{L}=\partial^4+\widetilde{v}^3\partial^2+\widetilde{v}^{3,1}\partial^1+\widetilde{v}^1$ be the Lax operator of the Drinfeld-Sokolov hierarchy of $C_2^{(1)}$ type \cite{DLZ2,DS}. For a local functional $F\in\hat{\mathscr{F}}^0_{\widetilde{v}}$, define the variational derivation with respect to $\widetilde{L}$ by
\begin{equation*}
    \frac{\delta F}{\delta \widetilde{L}}=\frac{1}{2}\left(\partial^{-1}\frac{\delta F}{\delta\widetilde{v}^1}+\frac{\delta F}{\delta\widetilde{v}^1}\partial^{-1}+\partial^{-3}\frac{\delta F}{\delta\widetilde{v}^3}+\frac{\delta F}{\delta\widetilde{v}^3}\partial^{-3}\right).
\end{equation*}
The bihamiltonian structure of the Drinfeld-Sokolov hierarchy of $C_2^{(1)}$ type is given by
\begin{align*}
    \{F,G\}_1^{C_2^{(1)}}&=\tr\left(\widetilde{L}\widetilde{B}\widetilde{A}-\widetilde{L}\widetilde{A}\widetilde{B}\right),\\
    \{F,G\}_2^{C_2^{(1)}}&=\tr\left(\left(\widetilde{L}\widetilde{B}\right)_{+}\widetilde{L}\widetilde{A}-\widetilde{A}\widetilde{L}\left(\widetilde{B}\widetilde{L}\right)_{+}\right),
\end{align*}
where
\begin{equation*}
    \widetilde{A}=\frac{\delta F}{\delta \widetilde{L}},\quad \widetilde{B}=\frac{\delta G}{\delta \widetilde{L}},
\end{equation*}
Let $\mathscr{P}_i^{C_2^{(1)}}$ be the matrix differential operator corresponding to $\{-,-\}_i^{C_2^{(1)}}$ in the following coordinate system
\begin{align*}
    \widetilde{w}^1&=-\frac{1}{16}\widetilde{v}^1+\frac{1}{128}(\widetilde{v}^3)^2+\frac{5}{192}\widetilde{v}^{3,2},\\
    \widetilde{w}^3&=\frac{1}{4}\widetilde{v}^3.
\end{align*}    
A direct calculation yields
\begin{equation}
    \left(\mathscr{P}_1^{\DZ,\mathbb{Z}_2},\mathscr{P}_2^{\DZ,\mathbb{Z}_2}\right)=\left(-16\mathscr{P}_1^{C_2^{(1)}},\mathscr{P}_2^{C_2^{(1)}}\right)\Big\vert_{\widetilde{w}^{1,\bullet}\rightarrow w^{1,\bullet},\widetilde{w}^{3,\bullet}\rightarrow w^{3,\bullet}}.
    \label{eq:z2reduction}
\end{equation}
It was shown in \cite{LRZ} that the $\mathbb{Z}_2$-reduction of the $4$-spin theory is governed by the Drinfeld-Sokolov hierarchy of $C_2^{(1)}$ type. Under this reduction, the equality \eqref{eq:z2reduction} explicitly describes the relation between the corresponding bihamiltonian structures.

\subsection{Reduction of the Gromov-Witten theory of $\mathbb{P}^1_{2,2,2,2}$}
\subsubsection{The Gromov-Witten theory of $\mathbb{P}^1_{2,2,2,2}$}
Let $\mathbb{P}^1_{2,2,2,2}$ be the Deligne-Mumford stack coarsely represented by $\mathbb{P}^1$ with exactly four distinct isotropic points of order $2$. The orbifold Gromov-Witten theory with target $\mathbb{P}^1_{2,2,2,2}$ defines a semisimple CohFT $\{\Lambda_{g,n}\}$. We refer the readers to \cite{ST} for the calculation of the associated Frobenius manifold in this example. For an introduction to the general theory, see \cite{CR1,CR2} in the symplectic-geometric setting and \cite{AGV1,AGV2} in the algebro-geometric setting.

In this case, the phase space is the Chen-Ruan cohomology $H=H^*_{\CR}\left(\mathbb{P}^1_{2,2,2,2},\mathbb{C}\right)$. As a $\mathbb{C}$-vector space, it is the usual cohomology of the rigidified cyclotomic inertia stack
\begin{equation*}
    \overline{\mathcal{I}}_\mu(\mathbb{P}^1_{2,2,2,2})=\mathbb{P}^1\bigsqcup\pt\bigsqcup\pt\bigsqcup\pt\bigsqcup\pt.
\end{equation*}
Here $\pt=\Spec\mathbb{C}$ and they come from four isotropic points. We can choose a basis $\phi_1,\ldots,\phi_6$ for $H$ where $\phi_1$ is the Poincar\'e dual of the fundamental class of $\mathbb{P}^1$, $\phi_6$ is the Poincar\'e dual of a point in $\mathbb{P}^1$, and $\phi_2,\phi_3,\phi_4,\phi_5$ are the Poincar\'e dual of the fundamental classes of four copies of $\pt$. They are homogeneous of degree
\begin{equation*}
    |\phi_1|=0,\quad |\phi_6|=1,\quad |\phi_2|=|\phi_3|=|\phi_4|=|\phi_5|=\frac{1}{2}.
\end{equation*}
Here, the degree is taken to be the half of the orbifold cohomological degree. The only nonzero entries of the bilinear form $\eta$ are
\begin{equation*}
    \eta(\phi_1,\phi_6)=1,\quad\eta(\phi_\alpha,\phi_\alpha)=\frac{1}{2},\quad 2\leq\alpha\leq 5.
\end{equation*}
We also have
\begin{equation*}
    d=1,\quad r^\alpha=0.
\end{equation*}
Let $\{t^{\alpha,p}\}$ be the coordinate functions for the big phase space $\mathcal{O}_{H^\infty}$. Define
\begin{align*}
    h_0(q)&=-q\frac{d}{dq}\log\left(\eta(q)\eta(q^2)^{-\frac{3}{2}}\eta(q^4)^{\frac{1}{2}}\right),\\
    h_1(q)&=-q\frac{d}{dq}\log\left(\eta(q^4)^{\frac{1}{4}}\right),\\
    h_2(q)&=-q\frac{d}{dq}\log\left(\eta(q^2)^{\frac{3}{2}}\eta(q^4)^{-\frac{3}{4}}\right),
\end{align*}
where
\begin{equation*}
    \eta(q)=q^{\frac{1}{24}}\prod_{n\geq 1}(1-q^n)
\end{equation*}
is the Dedekind eta function. The Frobenius manifold defined by this CohFT is given by
\begin{equation}
    \begin{aligned}
        F=&\frac{1}{2}(t^1)^2 t^6+\frac{1}{4}t^1\Bigl((t^2)^2+(t^3)^2+(t^4)^2+(t^5)^2\Bigr)+t^2 t^3 t^4 t^5 h_0(Q e^{t^6})\\
        &+\frac{1}{4}\Bigl((t^2)^4+(t^3)^4+(t^4)^4+(t^5)^4\Bigr)h_1(Q e^{t^6})\\
        &+\frac{1}{6}\Bigl((t^2)^2(t^3)^2+(t^2)^2(t^4)^2+(t^2)^2(t^5)^2\\
        &\qquad+(t^3)^2(t^4)^2+(t^3)^2(t^5)^2+(t^4)^2(t^5)^2\Bigr) h_2(Q e^{t^6}).
    \end{aligned}
    \label{eq:Frobeniusmanifold6}
\end{equation}
Here $Q$ is the Novikov variable.

\subsubsection{Classification of bihamiltonian structures}
We briefly recall the classification of bihamiltonian structures up to Miura-type transformations. The main reference is \cite{DLZ1,LZ1}.

Let $(v^{\alpha,s},\rho_\alpha^s)$ be a coordinate system, $(P_1,P_2)$ be a bihamiltonian structure with hydrodynamic leading terms. They have the following $\varepsilon$-expansions
\begin{equation*}
    P_i=\sum_{m\geq 0}\varepsilon^{m+1}P_i^{[m]},\quad P_i^{[m]}\in\hat{\mathscr{F}}_{m+1}^2.
\end{equation*}
Denote the matrix differential operators associated with $P_i^{[m]}$ by
\begin{equation*}
    \mathscr{P}_{i,v}^{[m]}=\Bigl(\mathscr{P}_{i,v}^{\alpha\beta,[m]}\Bigr)=\Bigl(\sum_{j=0}^{m+1}\mathscr{P}_{i,v;j}^{\alpha\beta,[m]}\partial^{j}\Bigr).
\end{equation*}
The leading terms of $(\mathscr{P}_{1,v},\mathscr{P}_{2,v})$ is defined to be $(\mathscr{P}_{1,v}^{\alpha\beta,[0]},\mathscr{P}_{2,v}^{\alpha\beta,[0]})$. It is known that $\mathscr{P}_{i,v;m+1}^{\alpha\beta,[m]}$ is a symmetric (resp. antisymmetric) $(2,0)$-tensor on $\mathfrak{X}$ if $m$ is even (resp. odd). 

Suppose that there exist $\widehat{u}^k(v)\in\hat{\mathscr{A}}^0_v$, such that
\begin{align}
    &\det\Bigl(\frac{\partial \widehat{u}^k}{\partial v^\alpha}\Bigr)\Big\vert_{v^\bullet\rightarrow 0}\neq 0,
    \label{eq:cancoor}\\
    &\det(\mathscr{P}_{2,v;1}^{\alpha\beta,[0]}-\widehat{u}^k\mathscr{P}_{1,v;1}^{\alpha\beta,[0]})=0,\quad 1\leq k\leq N.
    \label{eq:eigen}
\end{align}
The condition \eqref{eq:eigen} shows that $\widehat{u}^k(v)$ are determined up to permutations. Define 
\begin{equation*}
    \widehat{u}^k_{\ori}=\widehat{u}^k\Big\vert_{v^\bullet=0}.
\end{equation*}
The condition \eqref{eq:cancoor} means $\overline{u}^k=\widehat{u}^k-\widehat{u}^k_{\ori}$ defines a coordinate system for $\mathfrak{X}$. To remain consistent with the literature, we still refer to $\widehat{u}^k$ as canonical coordinates, although they cannot define a local isomorphism, and therefore do not yield a coordinate system in our setting.

Let 
\begin{equation*}
    \mathscr{P}_{i,\overline{u}}^{kl}=\sum_{m\geq 0}\varepsilon^{m+1}\mathscr{P}_{i,\overline{u}}^{kl,[m]}=\sum_{m\geq 0}\varepsilon^{m+1}\sum_{j=0}^{m+1}\mathscr{P}_{i,\overline{u};j}^{kl,[m]}\partial^j
\end{equation*}
be the matrix differential operators associated with $(P_1,P_2)$ under the $\overline{u}$-coordinate system. We use Latin superscripts for indices in the $\overline{u}$-coordinate system, and do not adopt the Einstein summation convention. There exists $f^k\in\hat{\mathscr{A}}^0_{\overline{u}}$ such that
\begin{equation*}
    \mathscr{P}_{1,\overline{u};1}^{kl,[0]}=\delta^{k,l}f^k,\quad\mathscr{P}_{2,\overline{u};1}^{kl,[0]}=\delta^{k,l}(\overline{u}^k+\widehat{u}^k_{\ori})f^k.
\end{equation*}
Define the central invariants of $(P_1,P_2)$ to be
\begin{equation*}
    c_k(\overline{u}^k)=\frac{1}{3(f^k)^2}\left(\mathscr{P}_{2,\overline{u};3}^{kk,[2]}-(\overline{u}^k+\widehat{u}^k_{\ori})\mathscr{P}_{1,\overline{u};3}^{kk,[2]}+\sum_{l\neq k}\frac{\left(\mathscr{P}_{2,\overline{u};2}^{lk,[1]}-(\overline{u}^k+\widehat{u}^k_{\ori})\mathscr{P}_{1,\overline{u};2}^{lk,[1]}\right)^2}{(\overline{u}^l+\widehat{u}^l_{\ori}-\overline{u}^k-\widehat{u}^k_{\ori})f^l}\right).
\end{equation*}
By Remark 3.2 in \cite{BRS}, $c_k$ depends only on $\overline{u}^k$ and lies in $\hat{\mathscr{A}}^0_{\overline{u}}$.

\begin{thm}[\cite{DLZ1}, Corollary 1.11]
    Given two bihamiltonian structures with hydrodynamic type leading terms satisfying \eqref{eq:cancoor} and \eqref{eq:eigen}, there exists a Miura-type transformation of the 2nd kind transforming one to another if and only if they have the same leading terms and the same set of central invariants.
    \label{thm:classifybihamiltonian}
\end{thm}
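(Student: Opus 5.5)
The statement to prove is Theorem~\ref{thm:classifybihamiltonian}, the Dubrovin--Liu--Zhang classification. It is quoted from \cite{DLZ1}, and I would prove it by deformation theory in bihamiltonian cohomology.

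\emph{Necessity.} I would first observe that a Miura-type transformation of the 2nd kind is the identity at the dispersionless level. It therefore leaves the leading terms $(\mathscr{P}_{1}^{[0]},\mathscr{P}_{2}^{[0]})$ unchanged, and hence also the canonical coordinates $\widehat{u}^k$ and the functions $f^k$ at leading order. Invariance of the $c_k$ would then be a direct computation. Such a transformation is $\exp(\mathrm{ad}_X)$ for some $X\in\hat{\mathscr{F}}^1_{\geq 2}$ (taking $\varepsilon$-expansions). Its action on $P_i^{[1]}$ and $P_i^{[2]}$ changes the coefficients $\mathscr{P}_{i;2}^{lk,[1]}$ and $\mathscr{P}_{i;3}^{kk,[2]}$ by explicit terms built from $X^{[1]}$ and $X^{[2]}$. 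One checks that the combination defining $c_k$ is annihilated by these changes; the off-diagonal quotient term is there precisely to compensate the contribution of $X^{[1]}$.

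\emph{Sufficiency.} First I would reduce to the case where both structures share the same leading term $(P_1^{[0]},P_2^{[0]})$, semisimple in canonical coordinates. Then I would remove the deformation order by order in $\varepsilon$. Suppose two deformations agree up to order $\varepsilon^{m}$. Their difference at the next order defines a class in the bihamiltonian cohomology $BH^2_{m+1}$ of $d_1=\mathrm{ad}_{P_1^{[0]}}$ and $d_2=\mathrm{ad}_{P_2^{[0]}}$. It satisfies $d_1 Q_1=0$, $d_2Q_2=0$ and $d_1Q_2+d_2Q_1=0$, modulo terms fixed by the lower orders. The key input is the vanishing of the cohomology of $d_1$ in positive degree, which comes from the triviality of Poisson cohomology of a flat hydrodynamic bracket. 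Using it, I would write $Q_1=d_1X$ and reduce to $Q_2'$ with $d_1Q_2'=0$, so $Q_2'=d_1Y$, and the remaining obstruction lives in $H(d_2 d_1\text{-data})$. The main computation is that $BH^2_{d}(\hat{\mathscr{F}})=0$ for $d\neq 3$, while $BH^2_3\cong\bigoplus_k C^\infty(\widehat{u}^k)$ is parametrized exactly by the functions $c_k(\overline{u}^k)$.

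This computation is the main obstacle. I would carry it out in canonical coordinates via a spectral sequence in the super variables $\theta_k$. One filters by the number of odd variables, and the $E_1$ page reduces to a computation for the operator $\sum_k(\overline{u}^k+\widehat{u}^k_{\ori}-\lambda)\theta_k\partial/\partial\ldots$, a Koszul-type complex. Semisimplicity, i.e.\ distinct eigenvalues, makes it acyclic except for the diagonal degree-three pieces, which contribute one function per $k$.

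Finally I would combine the pieces. At order $\varepsilon^{2}$, where the third-order differential terms appear, the obstruction class is exactly the difference of central invariants, which vanishes by hypothesis. Every other order has zero cohomology. So a correcting Miura transformation exists at each step, and composing them in the $\varepsilon$-adic topology gives the required transformation of the 2nd kind.
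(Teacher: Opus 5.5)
The paper does not prove this statement itself; it simply quotes it from \cite{DLZ1}. Your outline (invariance of the $c_k$ under Miura-type transformations of the 2nd kind, then order-by-order removal of the difference using the triviality of $\mathrm{ad}_{P_1^{[0]}}$-cohomology together with the fact that $BH^2_d=0$ for $d\geq 2$, $d\neq 3$, while $BH^2_3$ is detected exactly by the linearized central invariants) is the standard argument behind that citation. Two caveats: with this paper's grading ($\deg\rho^s_\alpha=s$) the generator $X$ lies in $\hat{\mathscr{F}}^1_{\geq 1}$, not $\hat{\mathscr{F}}^1_{\geq 2}$, since its degree-one part is what removes the degree-two difference; and your $BH^2$ sketch fails as stated, because filtering by the number of odd variables gives no reduction when $\mathrm{ad}_{P_1^{[0]}}$ and $\mathrm{ad}_{P_2^{[0]}}$ raise that number by exactly one, so this step, which carries the real content, is effectively imported from \cite{LZ1,DLZ1} rather than proved.
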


\subsubsection{The $\mathfrak{S}_4$-reduction}
Since the four isotropic points are symmetric, there is a natural $\mathfrak{S}_4$-action on $H$ given by permuting $\phi_2,\phi_3,\phi_4,\phi_5$. We can choose a basis
\begin{equation*}
    \widetilde{\phi}_1=\phi_1,\quad\widetilde{\phi}_2=\frac{1}{2}(\phi_2+\phi_3+\phi_4+\phi_5),\quad\widetilde{\phi}_3=\phi_6
\end{equation*}
for $H^{\mathfrak{S}_4}$, and a basis
\begin{equation*}
    \widetilde{\phi}_4=\frac{1}{2}(\phi_2-\phi_3),\quad\widetilde{\phi}_5=\frac{1}{2}(\phi_2-\phi_4),\quad\widetilde{\phi}_6=\frac{1}{2}(\phi_2-\phi_5)
\end{equation*}
for $H^\mov$. Denote by $\{\widetilde{t}^{\alpha,p}\}$ the coordinate system for the big phase space $H^\infty$ with respect to this new basis and by $(\widetilde{w}^{\alpha,s};\widetilde{\sigma}_{\alpha}^s)$ the DZ normal coordinates.  

The Frobenius manifold defined by $\{\Lambda_{g,n}^{\mathfrak{S}_4}\}$ is given by the following potential
\begin{equation*}
     F^{\mathfrak{S}_4}=\frac{1}{2}(\widetilde{t}^1)^2 \widetilde{t}^3+\frac{1}{4}\widetilde{t}^1(\widetilde{t}^2)^2-\frac{1}{384}(\widetilde{t}^2)^4 E_2(Q e^{\widetilde{t}^3}),
\end{equation*}
where $E_2(q)=24 q\frac{d}{d q}\log\eta(q)$ is the Eisenstein series of weight $2$. The quasi-modularity of $E_2(q)$ gives rise to a Frobenius manifold structure on it. It is isomorphic,  as Frobenius manifolds to the Hurwitz space $M_{1,1}$ and to the orbit space of the complex crystallographic Coxeter group $\hat{A}_1$ (see \cite{Dub}).

Let $(\mathscr{P}_{1,\widetilde{w}}^{\mathfrak{S}_4},\mathscr{P}_{2,\widetilde{w}}^{\mathfrak{S}_4})$ be the pair of matrix differential operators corresponding to the bihamiltonian structure $(P_1^{\DR,\mathfrak{S}_4},P_2^{\DR,\mathfrak{S}_4})$ in the DZ normal coordinate system with expansions
\begin{equation*}
    \mathscr{P}_{i,\widetilde{w}}^{\mathfrak{S}_4,\alpha^\prime\beta^\prime}=\sum_{m\geq 0}\varepsilon^{m+1}\mathscr{P}_{i,\widetilde{w}}^{\mathfrak{S}_4,\alpha^\prime\beta^\prime,[m]}=\sum_{m\geq 0}\varepsilon^{m+1}\sum_{j=0}^{m+1} \mathscr{P}_{i,\widetilde{w};j}^{\mathfrak{S}_4,\alpha^\prime\beta^\prime,[k]}\partial^j.
\end{equation*}

According to Theorem 1 and Theorem 2 in \cite{DZ1}, the first few terms up to $\mathcal{O}(\varepsilon^3)$ can be described explicitly. In particular, we have
\begin{align*}
    \mathscr{P}_{1,\widetilde{w}}^{\mathfrak{S}_4,\alpha^\prime\beta^\prime,[0]}&=\eta^{\alpha^\prime\beta^\prime}\partial,\quad\mathscr{P}_{2,\widetilde{w};1}^{\mathfrak{S}_4,\alpha^\prime\beta^\prime,[0]}=E^{\zeta^\prime}c^{\alpha^\prime\beta^\prime}_{\zeta^\prime},\\
    \mathscr{P}_{i,\widetilde{w}}^{\mathfrak{S}_4,\alpha^\prime\beta^\prime,[1]}&=0,\quad i=1,2\\
    \mathscr{P}_{1,\widetilde{w};3}^{\mathfrak{S}_4,\alpha^\prime\beta^\prime,[2]}&=\frac{1}{12}\eta^{\mu\nu}c^{\alpha^\prime\beta^\prime}_{\mu\nu},\\
    \mathscr{P}_{2,\widetilde{w};3}^{\mathfrak{S}_4,\alpha^\prime\beta^\prime,[2]}&=\frac{1}{12}\left(\bigl(\frac{3}{2}-|\widetilde{\phi}_\nu|\bigr)c^{\mu^\prime\nu}_\nu c^{\alpha^\prime\beta^\prime}_{\mu^\prime}+E^{\zeta^\prime}c^{\mu^\prime\nu}_{\zeta^\prime\nu}c^{\alpha^\prime\beta^\prime}_{\mu^\prime}+E^{\zeta^\prime}c^{\mu\nu}_{\zeta^\prime}c^{\alpha^\prime\beta^\prime}_{\mu\nu}\right),
\end{align*}
where
\begin{align*}
    &(E^1,E^2,E^3)=\left(1,\frac{1}{2},0\right),\\
    &c^{\alpha^\prime\beta^\prime}_{\mu^\prime}=\eta^{\alpha^\prime\nu^\prime}\eta^{\beta^\prime\zeta^\prime}\frac{\partial^3 F(\widetilde{w}^1,\widetilde{w}^2,\widetilde{w}^3)}{\partial \widetilde{w}^{\mu^\prime}\partial \widetilde{w}^{\nu^\prime}\partial \widetilde{w}^{\zeta^\prime}}\Bigg\vert_{\widetilde{w}^4,\widetilde{w}^5,\widetilde{w}^6\rightarrow 0},\\
    &c^{\alpha^\prime\beta^\prime}_{\mu^\prime\nu^\prime}=\frac{\partial c^{\alpha^\prime\beta^\prime}_{\mu^\prime}}{\partial \widetilde{w}^{\nu^\prime}}.
\end{align*}
The canonical coordinates are
\begin{align*}
    \widehat{u}^1=&\widetilde{w}^1+\frac{3}{2}(\widetilde{w}^2)^2h_1(Q^{\frac{1}{2}} e^{\frac{\widetilde{w}^3}{2}})-\frac{1}{6}(\widetilde{w}^2)^2h_2(Q^{\frac{1}{2}} e^{\frac{\widetilde{w}^3}{2}}),\\
    \widehat{u}^2=&\widetilde{w}^1-\frac{1}{2}(\widetilde{w}^2)^2h_0(Q^{\frac{1}{2}} e^{\frac{\widetilde{w}^3}{2}})+\frac{1}{3}(\widetilde{w}^2)^2h_2(Q^{\frac{1}{2}} e^{\frac{\widetilde{w}^3}{2}}),\\
    \widehat{u}^3=&\widetilde{w}^1+\frac{1}{2}(\widetilde{w}^2)^2h_0(Q^{\frac{1}{2}} e^{\frac{\widetilde{w}^3}{2}})+\frac{1}{3}(\widetilde{w}^2)^2h_2(Q^{\frac{1}{2}} e^{\frac{\widetilde{w}^3}{2}}).
\end{align*}
One can directly verify that the central invariants of $(P_1^{\DR,\mathfrak{S}_4},P_2^{\DR,\mathfrak{S}_4})$ are $\{\frac{1}{6},\frac{1}{24},\frac{1}{24}\}$.

According to Theorem 1 in \cite{LWZ1} and the main theorem in \cite{LWZ2}, the tau function of the reduced DZ hierarchy does not satisfy the Virasoro constraints constructed in \cite{DZ2}, even up to Miura-type transformations. Instead, the correct Virasoro symmetries are obtained by adding nonlinear differential polynomial terms.

\subsubsection{The $\mathbb{Z}_2\times\mathbb{Z}_2$-reduction}
\label{sssec:equalcentralinvariants}
Define a $\mathbb{Z}_2\times\mathbb{Z}_2$-action on $H$ as follows. We choose two distinct order $2$ elements $\gamma_1,\gamma_2$ in $\mathbb{Z}_2\times\mathbb{Z}_2$. Let $\gamma_1$ act on $H$ by permuting $\phi_2$ and $\phi_3$, and let $\gamma_2$ act by changing the signs of $\phi_4,\phi_5$.

\begin{lem}
    The $\mathbb{Z}_2\times\mathbb{Z}_2$-action on $H$ defines a numerical finite symmetry.
\end{lem}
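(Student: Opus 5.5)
We have to check the three conditions in the definition of numerical symmetry: preservation of gradation and $\phi_1$, invariance \eqref{eq:nsym} of all tautological integrals, and $r^{\alpha^\pprime}=0$. The first and third are immediate. $\gamma_1,\gamma_2$ fix $\phi_1,\phi_6$ and only permute or rescale the degree-$\frac12$ classes $\phi_2,\dots,\phi_5$, so the grading is preserved. Moreover $r^\alpha=0$ for all $\alpha$. Since $\mathbb{Z}_2\times\mathbb{Z}_2$ is generated by $\gamma_1,\gamma_2$, it suffices to prove \eqref{eq:nsym} for $\gamma=\gamma_1$ and $\gamma=\gamma_2$ separately.

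For $\gamma_1$ (the transposition of $\phi_2,\phi_3$) I would use the geometric $\mathfrak{S}_4$-symmetry. An automorphism of the orbifold $\mathbb{P}^1_{2,2,2,2}$ permuting the four stacky points need not exist for a fixed cross-ratio. However, the orbifold Gromov--Witten CohFT is invariant under deformation of the positions of the orbifold points. Monodromy in the configuration space of four points on $\mathbb{P}^1$ realizes every permutation, and parallel transport acts on $H_{\CR}$ by permuting the twisted sectors. The CohFT classes $\Lambda_{g,n}$ are therefore invariant at the cycle level under all of $\mathfrak{S}_4$, and in particular \eqref{eq:nsym} holds for $\gamma_1$. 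Alternatively one can argue numerically: by Givental--Teleman reconstruction the semisimple CohFT is determined by its Frobenius manifold \eqref{eq:Frobeniusmanifold6} together with the $R$-matrix fixed by homogeneity. The potential $F$ is visibly invariant under any permutation of $t^2,\dots,t^5$, so uniqueness of $R$ gives equivariance.

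For $\gamma_2$ (the sign change on $\phi_4,\phi_5$) the same reconstruction argument works. The potential is invariant: $t^2t^3t^4t^5$ is preserved because two signs flip, and the quartic terms only involve squares. Thus $\gamma_2$ acts as an automorphism of the Frobenius manifold preserving $\eta$, $\phi_1$ and the Euler field $E$. The $R$-matrix of a homogeneous semisimple CohFT is uniquely determined by the Frobenius manifold and $E$ (Teleman), so it is intertwined by any such automorphism, and the reconstructed classes satisfy $\Lambda_{g,n}(\gamma.e_1\otimes\cdots\otimes\gamma.e_n)=\Lambda_{g,n}(e_1\otimes\cdots\otimes e_n)$. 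Pairing with any tautological class $A$ then gives \eqref{eq:nsym}.

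A more geometric route for $\gamma_2$ would use the $\mathbb{Z}_2$-gerbe structure at each orbifold point, a degree parity argument, or a twisting by a $2$-torsion line bundle. I would not pursue that route.

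The main obstacle is to make rigorous the passage from invariance of the genus-zero potential to invariance of all higher genus classes. This requires checking that the Givental--Teleman reconstruction applies formally near the origin, which is not a semisimple point here. I would handle this by working at a nearby semisimple point $s$ fixed by the group, for instance on $H^\Gamma$, which contains $\phi_6$-directions where the product is semisimple. I would establish equivariance of $\Lambda^s_{g,n}$ there and then recover $\Lambda_{g,n}$ by shifting back along the invariant direction $-s$. The shift formula is manifestly equivariant when $s\in H^\Gamma$.
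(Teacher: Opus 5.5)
Your argument is correct in outline, but it takes a genuinely different route from the paper, and one step fails as written.

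\textbf{How the two routes differ.} The paper takes $\gamma_1$ for granted as part of the geometric $\mathfrak{S}_4$-symmetry; your monodromy argument is a reasonable justification of that. For $\gamma_2$ the paper argues purely numerically:
\begin{enumerate}
\item Using the ambient gluing axioms and the $\gamma_2$-invariance of $\eta^{\alpha\beta}\phi_\alpha\otimes\phi_\beta$, it reduces to $A$ a monomial in $\psi$- and $\kappa$-classes.
\item The dimension constraint $2g-2+n=\sum_i|e_i|+\frac12|A|$, together with $|e_i|\le 1$, forces $|A|\ge 4g-4$.
\item It checks the extremal cases directly. For $g=0$, $A=1$ this is invariance of $F$. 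For $g\ge 1$, $|A|=4g-4$, all insertions are multiples of $\phi_6$.
\item Every remaining monomial lies in the range where $g$-reduction pushes it to the boundary, and the argument closes recursively.
\end{enumerate}
That proof needs no semisimplicity, no Teleman, and no convergence of shifted CohFTs. It uses the special degree bound of this target and yields only invariance against tautological classes, which is all the definition asks for. Your reconstruction argument gives cycle-level invariance. It also applies to any automorphism of a homogeneous semisimple Frobenius manifold fixing $\eta$, $\phi_1$ and $E$. The cost is Teleman's classification plus the analytic shift machinery.

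\textbf{The step that fails: your base point.} The span of $\phi_1,\phi_6$ contains no semisimple points. Every $Q$-dependent term of \eqref{eq:Frobeniusmanifold6} has degree at least $4$ in $t^2,\dots,t^5$. So on $\{t^2=t^3=t^4=t^5=0\}$ the product comes only from $\frac12(t^1)^2t^6+\frac14t^1\sum_{i=2}^5(t^i)^2$. This gives $\phi_i*\phi_i=\frac12\phi_6$ and $\phi_6*\phi_6=0$, which is nilpotent. A $\Gamma$-fixed semisimple point would need $t^2=t^3\neq 0$. You would then still have to check that the full six-dimensional algebra, not only the reduced one, is semisimple there.

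\textbf{The repair.} No fixed point is needed.
\begin{enumerate}
\item Set $\Lambda^{\gamma}_{g,n}(e_1\otimes\cdots\otimes e_n):=\Lambda_{g,n}(\gamma.e_1\otimes\cdots\otimes\gamma.e_n)$. This is again a homogeneous CohFT with the same $\eta$, unit and Euler field.
\item For any semisimple point $s$ one has $(\Lambda^\gamma)^s_{g,n}(e_1\otimes\cdots\otimes e_n)=\Lambda^{\gamma.s}_{g,n}(\gamma.e_1\otimes\cdots\otimes\gamma.e_n)$.
\item Its genus-zero potential is $F(\gamma.(s+t))=F(s+t)$ modulo terms of degree at most $2$, exactly as for $\Lambda^s$.
\item Teleman's uniqueness for homogeneous semisimple CohFTs gives $(\Lambda^\gamma)^s=\Lambda^s$ on an open dense set of $s$.
\item Both sides are analytic in $s$, so letting $s\to 0$ gives $\Lambda^\gamma=\Lambda$.
\end{enumerate}
With this change your proof goes through, and it gives more than \eqref{eq:nsym} requires.
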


\begin{proof}
    We adopt the notation of \eqref{eq:nsym}. The only thing we need to prove is that \eqref{eq:nsym} holds for $\gamma=\gamma_2$. By the linearity, we may assume that the $e_i$ are homogeneous. Using the fact that tautological classes are linear combinations of classes of the form \eqref{eq:taubasis}, that $\eta^{\alpha\beta}\phi_\alpha\otimes\phi_\beta\in H\otimes H$ is invariant under the $\gamma_2$-action, and the axioms \ref{item:gltree}, \ref{item:glloop}, it suffices to verify \eqref{eq:nsym} when $A$ is a monomial in $\psi$-classes and $\kappa$-classes on $\overline{\mathcal{M}}_{g,n}$. Denote by $|A|\in2\mathbb{Z}$ the cohomological degree of $A$.
    
    By the dimension constraints in Gromov-Witten theory, the integral
    \begin{equation}
        \int_{\overline{\mathcal{M}}_{g,n}}\Lambda_{g,n}(e_1\otimes\cdots\otimes e_n)A
        \label{eq:integral}
    \end{equation}
    is nonzero only if
    \begin{equation}
        2g-2+n=\sum_{i=1}^n|e_i|+\frac{1}{2}|A|.
        \label{eq:dimconstraint}
    \end{equation}
    However, we have $|e_i|\leq 1$. So $|A|\geq \max\{4g-4,0\}$. Since $F\in\mathcal{O}_{H^{(0)}}$ is invariant under the $\gamma_2$-action, the equation \eqref{eq:nsym} holds when $g=0$ and $|A|=0$. When $g\geq 1$ and $|A|=4g-4$, the equation \eqref{eq:dimconstraint} holds if and only if $e_i\in\mathbb{C}\phi_6$. In this case, the equation \eqref{eq:nsym} holds since $\phi_6$ is invariant under the $\gamma_2$-action. Hence we may assume that $|A|\geq\max\{4g-2,2\}>0$.
    
    According to the $g$-reduction technique introduced in \cite{FP2,FSZ,Ion}, a monomial $M$ in $\psi$-classes and $\kappa$-classes can be written as a linear combination of dual graphs, i.e. classes of the form \eqref{eq:taubasis} with $A_i$ equal to the identity class, provided that $M$ is of (cohomological) degree at least $2g$ when $g\geq 1$, or at least $2$ when $g=0$. Hence we may assume that $A$ is the identity class, and this case has already been verified above.
\end{proof}

Note that for a nonzero constant $a\in\mathbb{C}$, we can define a new CohFT $\{a^{g-1}\Lambda_{g,n}\}$ with the corresponding bilinear form $a^{-1}\eta$. This new CohFT can be viewed as a rescaling of the original one. In what follows, we consider the rescaled numerical partial CohFT $\{2^{1-g}\Lambda_{g,n}^{\mathbb{Z}_2\times\mathbb{Z}_2}\}$.

We choose a basis
\begin{equation*}
    \overline{\phi}_1=\phi_1,\quad\overline{\phi}_2=\frac{1}{2}\phi_2+\frac{1}{2}\phi_3,\quad\overline{\phi}_3=\frac{1}{2}\phi_6
\end{equation*}
for $H^{\mathbb{Z}_2\times\mathbb{Z}_2}$ and a basis
\begin{equation*}
    \overline{\phi}_4=\phi_2-\phi_3,\quad\overline{\phi}_5=\phi_4,\quad\overline{\phi}_6=\phi_5
\end{equation*}
for $H^\mov$. Denote the dual basis for the big phase space $H^\infty$ with respect to this new basis by $\{\overline{t}^{\alpha,p}\}$. The Frobenius manifold is defined by
\begin{equation}
    F^{\mathbb{Z}_2\times\mathbb{Z}_2}=\frac{1}{2}(\overline{t}^1)^2\overline{t}^3+\frac{1}{4}\overline{t}^1(\overline{t}^2)^2-\frac{1}{384}(\overline{t}^2)^4 E_2(\overline{Q} e^{\overline{t}^3}),
    \label{eq:Frobeniusmanifold222}
\end{equation}
which coincides with $F^{\mathfrak{S}_4}$ where $\overline{Q}=Q^2$. One can verify similarly that the central invariants in this case are given by $\{\frac{1}{12},\frac{1}{12},\frac{1}{12}\}$. We further rescale $\varepsilon$ by setting $\varepsilon\mapsto \frac{1}{2}\varepsilon$ so that all central invariants of the $\mathbb{Z}_2\times\mathbb{Z}_2$-reduced DZ hierarchy become equal to $\frac{1}{24}$. 

\begin{cor} 
    The rescaled $\mathbb{Z}_2\times\mathbb{Z}_2$-reduced DZ hierarchy cannot be transformed into the $\mathfrak{S}_4$-reduced DZ hierarchy by a Miura-type transformation.
\end{cor}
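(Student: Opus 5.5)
The plan is to argue by contradiction using the classification of bihamiltonian structures, Theorem~\ref{thm:classifybihamiltonian}, together with the fact that central invariants are invariants of Miura-type transformations. Suppose a Miura-type transformation carries the rescaled $\mathbb{Z}_2\times\mathbb{Z}_2$-reduced DZ hierarchy to the $\mathfrak{S}_4$-reduced DZ hierarchy. We equip both with the bihamiltonian structures from Theorem~\ref{thm:reduceddrdz}, namely $P_i^{\DZ,\Gamma}=\overline{\iota}^*(P_i^{\DZ})$.

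\textbf{Step 1: the transformation must respect the bihamiltonian structures.} Both hierarchies have the same dispersionless limit. Their Frobenius potentials $F^{\mathfrak{S}_4}$ and $F^{\mathbb{Z}_2\times\mathbb{Z}_2}$ coincide after $\overline{Q}=Q^2$, and the $\varepsilon$-rescaling does not change the leading terms. First, decompose a general Miura-type transformation into its genus-zero part (a change of coordinates on $\mathfrak{X}$) and a 2nd-kind part. Next, argue that the dispersionless part must map the principal hierarchy to itself; this forces the genus-zero part to be a symmetry of the semisimple Frobenius manifold. Such a symmetry preserves the flat metric and the intersection form up to the allowed normalisation, so it preserves the leading terms $(\mathscr{P}_1^{[0]},\mathscr{P}_2^{[0]})$. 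Composing with this symmetry, we may then assume the transformation is of the 2nd kind.

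We then need the transformation to map the bihamiltonian structure to the bihamiltonian structure, not merely the flows. For this I would use the known uniqueness result for semisimple bihamiltonian integrable hierarchies: the bihamiltonian structure compatible with a given semisimple hierarchy having these leading terms is unique up to the natural normalisation, in the spirit of the axioms recalled in the introduction and \cite{DLZ3,DZ3}. I expect this step to be the main obstacle, since the statement concerns hierarchies rather than bihamiltonian structures. If it proves hard to justify in general, I would fall back on the weaker but concrete statement that the central invariants, which are determined from the hierarchy together with its tau structure, must agree.

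\textbf{Step 2: compare central invariants.} Under $\varepsilon\mapsto\frac{1}{2}\varepsilon$, the terms $\mathscr{P}^{[2]}_{i}$ scale by $\frac14$ and the terms $\mathscr{P}^{[1]}$ by $\frac12$. Since $\mathscr{P}^{[1]}$ enters the formula for $c_k$ quadratically, every $c_k$ is multiplied by $\frac14$. This step only needs to confirm that the effect is a uniform multiplication by a constant, whatever the precise normalisation; in particular it cannot make the three values unequal. In the text's normalisation the values $\{\frac{1}{12},\frac{1}{12},\frac{1}{12}\}$ become the constant triple $\{\frac{1}{24},\frac{1}{24},\frac{1}{24}\}$.

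For the $\mathfrak{S}_4$-reduction the central invariants are $\{\frac16,\frac1{24},\frac1{24}\}$. The leading terms agree, and the canonical coordinates $\widehat{u}^k$ match up to permutation, so Theorem~\ref{thm:classifybihamiltonian} would require the multisets $\{\frac{1}{24},\frac{1}{24},\frac{1}{24}\}$ and $\{\frac16,\frac1{24},\frac1{24}\}$ to coincide under some permutation of the $\widehat{u}^k$. The same mismatch persists for any allowed constant rescaling of $\varepsilon$, since one triple is constant and the other is not. This contradiction proves the corollary.
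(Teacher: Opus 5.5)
This is essentially the paper's argument, whose entire proof is your Step~2. With the reduced DZ hierarchies carrying the bihamiltonian structures $P_i^{\DZ,\Gamma}=\overline{\iota}^*(P_i^{\DZ})$ of Theorem~\ref{thm:reduceddrdz}, the constant central invariants $\{\frac{1}{24},\frac{1}{24},\frac{1}{24}\}$ and $\{\frac16,\frac{1}{24},\frac{1}{24}\}$ cannot be matched by any relabelling, so Theorem~\ref{thm:classifybihamiltonian} excludes a Miura-type transformation. Your Step~1 is unnecessary under this reading, and its fallback would fail anyway: a tau structure only forces the central invariants to be constant, which both triples already are. (Central invariants are also unchanged, up to relabelling, by the genus-zero part of any Miura-type transformation, so no reduction to the 2nd kind is needed.) One slip: the factor $\frac14$ you compute for $\varepsilon\mapsto\frac12\varepsilon$ does not give the $\frac{1}{12}\mapsto\frac{1}{24}$ you quote. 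In the paper's convention $P=\sum_m\varepsilon^{m+1}P^{[m]}$, each $c_k$ is multiplied by $\frac12$, but as you observe, only the uniformity of the factor matters.
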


\begin{proof}
    This follows from the fact that they have unequal central invariants (see Theorem~\ref{thm:classifybihamiltonian}).
\end{proof}

By the main theorem in \cite{LWZ2}, the rescaled $\mathbb{Z}_2\times\mathbb{Z}_2$-reduced DZ hierarchy together with its tau structure defines a tau function satisfying the usual Virasoro symmetries after a Miura-type transformation. On the other hand, the topological recursion of the genus $1$ spectral curve also determines a tau function satisfying the usual Virasoro constraints, whose genus zero data coincide with the Hurwitz space $M_{1;1}$ (see \cite{GJZ}), and hence with $F^{\mathbb{Z}_2\times\mathbb{Z}_2}$. The Virasoro constraints force them to coincide up to an automorphism of the big phase space induced by a Miura-type transformation on $\hat{\mathscr{A}}$, together with a multiplication by the exponential of a quadratic polynomial.

\begin{cor}
    The Miura-type transformation above must be nontrivial.
    \label{cor:nontrivial}
\end{cor}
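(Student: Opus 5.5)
Argue by contradiction: assume the Miura-type transformation relating the two tau functions is trivial, i.e.\ the identity on the jet variables up to the obvious linear change identifying the genus zero data. Then the free energy $\mathcal{F}^{\mathbb{Z}_2\times\mathbb{Z}_2}$ of the rescaled reduced theory and the topological recursion free energy $\mathcal{F}^{\mathrm{TR}}$ agree up to a quadratic polynomial in the $\overline{t}^{\alpha,p}$. The plan is to show that the rescaled reduced free energy does not satisfy the usual Virasoro constraints of \cite{DZ2}, whereas $\mathcal{F}^{\mathrm{TR}}$ does; this gives the contradiction. Multiplying by the exponential of a quadratic polynomial only shifts finitely many low-genus, low-degree terms, and I will arrange the test of the constraints so that those terms do not enter.

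The distinction can be seen already at genus one. First, compute the genus one free energy of the reduced theory from the ambient one. For a semisimple CohFT, $\mathcal{F}_1$ is given by the Dubrovin--Zhang $G$-function formula $\frac{1}{24}\log\det(c_{\alpha\beta\gamma}v^\gamma_x)+G(v)$, where the $G$-function is fixed by the isomonodromic tau function and the Getzler equation. Restrict this via $\iota^{\infty,*}$. By Lemma~\ref{lem:equi2vanish} and the $\Gamma$-equivariance of \eqref{eq:dznormal2bigphase}, $\mathcal{F}^{\Gamma}_1=\iota^{\infty,*}\mathcal{F}_1$. Setting the moving coordinates to zero in $\det(c_{\alpha\beta\gamma}v^\gamma_x)$ over all six indices gives, in canonical coordinates, a factor of the form $\prod_i (\widehat{u}^i_x)^{m_i}$. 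The exponents $m_i$ count how many ambient canonical coordinates collapse onto the $i$-th reduced one. The resulting coefficient of $\log\widehat{u}^i_x$ is $\frac{m_i}{24}$; after the rescalings $\Lambda\mapsto 2^{1-g}\Lambda$ and $\varepsilon\mapsto\frac12\varepsilon$ it becomes $\frac{1}{24}\cdot\frac{m_i}{2}$. The central invariants $\{\frac1{12},\frac1{12},\frac1{12}\}$ force $m_i=2$. This is consistent, and the $\log\widehat{u}^i_x$ part then matches the TR (usual Virasoro) value $\frac1{24}\sum_i\log\widehat{u}^i_x$.

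So the discrepancy must sit in the $G$-function part. Next, compare $\iota^*G$, suitably rescaled, with the $G$-function of $M_{1;1}$ determined by the Virasoro/Getzler constraints, namely $G=-\frac{1}{24}\log J$ plus the isomonodromic tau part. I will compute both explicitly in terms of $\eta$-quotients, using the $h_0,h_1,h_2$ restricted at $\widetilde{w}^2$-coordinates, and evaluate at one convenient point. The cleanest point is the $q$-expansion leading term in $\overline{Q}$. There a mismatch in the coefficient of $\overline{t}^3$ (equivalently, the $\log\eta$ exponent) shows that $\iota^*G\neq G^{\mathrm{TR}}$. Such a mismatch cannot be absorbed into a quadratic polynomial, because it involves a nonconstant function of $\overline{t}^3$. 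Hence the free energies differ at genus one by a nontrivial function of the jets. Since $\mathcal{F}_1$ is determined by the tau structure, the transformation connecting the two theories has a nonzero genus one correction term, that is, it is a nontrivial Miura-type transformation.

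The main obstacle is the explicit evaluation of $\iota^*G$. The ambient $G$-function for $\mathbb{P}^1_{2,2,2,2}$ involves the rotation coefficients of a six-dimensional Frobenius manifold, and restricting them to the fixed locus requires the $\eta$-quotient identities behind $h_0,h_1,h_2$. I would first try to avoid the full computation: I will only compare the leading $\overline{Q}$-order and exploit the $\mathbb{Z}_2\times\mathbb{Z}_2$ structure, which forces the ambient canonical coordinates to coincide pairwise on the fixed locus.
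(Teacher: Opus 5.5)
Your overall strategy matches the paper's: compare the genus-one primary free energies ($G$-functions) of the two theories and show that their difference is not a quadratic polynomial. The decisive test you propose, however, does not deliver the contradiction.

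At leading order in $\overline{Q}$ one has $\log\eta(\overline{Q}e^{\overline{t}^3})=\frac{1}{24}(\log\overline{Q}+\overline{t}^3)+O(\overline{Q}e^{\overline{t}^3})$. A mismatch in the $\log\eta$-exponent therefore shows up at that order only as a discrepancy that is affine-linear in $\overline{t}^3$. That is exactly the kind of term the free-energy ambiguity absorbs: free energies are determined only up to affine-linear terms in the $\overline{t}^{\alpha,p}$, and the statement allows multiplication by the exponential of a quadratic polynomial. Your claim that the mismatch ``involves a nonconstant function of $\overline{t}^3$, hence cannot be absorbed'' is false for linear functions.

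To rescue this route you would need three things:
\begin{enumerate}
\item the coefficient of $\overline{Q}e^{\overline{t}^3}$, not the leading term;
\item an exact account of how $\Lambda_{g,n}\mapsto 2^{1-g}\Lambda_{g,n}$ and $\varepsilon\mapsto\frac12\varepsilon$ act on the genus-one part, since whether the two $\log\eta$-coefficients differ depends on this normalization;
\item an actual computation of $\iota^*G$, which you leave open as the ``main obstacle''.
\end{enumerate}
Your $\log\widehat{u}^i_x$ bookkeeping is harmless but plays no role at the primary level, because there $\widehat{u}^i_x=1$.

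The observation you are missing makes all of this unnecessary, and it is the one the paper uses. The $G$-function of $M_{1;1}$ computed in \cite{DZ1} is $-\log\bigl[(\overline{t}^2)^{1/8}\eta(\overline{Q}e^{\overline{t}^3})\bigr]$, which contains the term $-\frac18\log\overline{t}^2$. This reflects that $\overline{t}^2=0$ is a non-semisimple point where the canonical coordinates $\widehat{u}^i$ coalesce. Since the TR tau function satisfies the usual Virasoro constraints, it is governed by the loop equation, so its genus-one primary free energy is this $G$-function.

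On the other side, the genus-one primary free energy of the reduced theory is $\iota^{\infty,*}$ of the ambient one. By \cite{ST} this equals $-\frac12\log\eta(\overline{Q}e^{\overline{t}^3})$. More to the point, it lies in $\mathcal{O}_{H^{\Gamma,\infty}}$, so it is automatically a formal power series in the $\overline{t}$-variables.

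A $\log\overline{t}^2$ term cannot come from a quadratic polynomial, and no constant rescaling removes it. Hence the relating Miura-type transformation must be nontrivial. This argument requires no rotation-coefficient evaluation of $\iota^*G$ and is immune to the normalization issues above.
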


\begin{proof}
    Since the tau function determined by topological recursion satisfies the usual Virasoro constraints, it must be determined by Dubrovin-Zhang's loop equation (see \cite{DZ3}). In particular, the genus one primary free energy is given by the $G$-function of the Frobenius manifold \eqref{eq:Frobeniusmanifold222} (see \cite{DZ1,DZ2}):
    \begin{equation*}
        -\log\left[(\overline{t}^2)^{\frac{1}{8}}\eta(\overline{Q}e^{\overline{t}^3})\right],
    \end{equation*}
    which has a logarithmic singularity. The $G$-function in this case was computed in \cite{DZ1}.
    
    On the other hand, the genus one primary free energy of $\{2^{1-g}\Lambda_{g,n}^{\mathbb{Z}_2\times\mathbb{Z}_2}\}$ is given by the restriction of the $G$-function of the Frobenius manifold \eqref{eq:Frobeniusmanifold6} (see \cite{ST}):
    \begin{equation*}
        \mathcal{F}_1^{\mathbb{Z}_2\times\mathbb{Z}_2}\Big\vert_{\overline{t}^{\bullet,>0}}=\iota^{\infty,*}\left(\mathcal{F}_1\Big\vert_{t^{\bullet,>0}}\right)=-\frac{1}{2}\log\eta(\overline{Q}e^{\overline{t}^3}).
    \end{equation*}
    Consequently, there must exist a nontrivial Miura-type transformation that eliminates the logarithmic singularity involving $\overline{t}^2$.
\end{proof}

\vskip 0.3cm
\noindent \textbf{Acknowledgements.}
This work is supported by NSFC No.\,12571266. We would like to thank Ce Ji and Sergey Shadrin for very helpful discussions and comments on this work.

\end{document}